\documentclass[12pt]{amsart}

\usepackage{amssymb, amsfonts, amsmath, array}
\usepackage{longtable}
\usepackage{url}
\usepackage[all,arc]{xy}
\usepackage{amscd}
\usepackage{mathrsfs}
\usepackage{geometry}
\usepackage[colorlinks,citecolor=blue]{hyperref}
\usepackage{comment}
\usepackage{enumerate}
\usepackage{graphicx}
\usepackage{bm}
\usepackage{color}
\usepackage{anyfontsize}
\usepackage{caption}

\numberwithin{equation}{section}

\theoremstyle{plain}
\newtheorem{theorem}{Theorem}[section]
\newtheorem{corollary}[theorem]{Corollary}
\newtheorem{lemma}[theorem]{Lemma}
\newtheorem{proposition}[theorem]{Proposition}

\newtheorem{definition}[theorem]{Definition}

\theoremstyle{remark}
\newtheorem{remark}{Remark}[section]

\begin{document}


\title[On the dimension of the space of static potentials]{On the dimension of the space of static potentials on three-manifolds}

\author{Vladimir Medvedev}

\address{Faculty of Mathematics, National Research University Higher School of Economics, 6 Usacheva Street, Moscow, 119048, Russian Federation}

\email{vomedvedev@hse.ru}



\begin{abstract}
We investigate the interplay between the dimension of the space of static potentials and the geometric and topological structure of the underlying static three-manifold. A partial classification of boundaryless static manifolds is obtained in terms of this dimension. We also treat the case of static manifolds with boundary. In particular, we prove that if a compact static manifold with boundary admits a static potential whose zero set is disjoint from the boundary, then the space of static potentials is necessarily one-dimensional. These results rely on a careful analysis of the relative positions of the zero sets of linearly independent static potentials -- a technique originally introduced by Miao and Tam.
\end{abstract}

\maketitle


\newcommand\cont{\operatorname{cont}}
\newcommand\diff{\operatorname{diff}}

\newcommand{\dvol}{\text{dA}}
\newcommand{\Ric}{\operatorname{Ric}}
\newcommand{\Hess}{\operatorname{Hess}}
\newcommand{\GL}{\operatorname{GL}}
\newcommand{\myO}{\operatorname{O}}
\newcommand{\myP}{\operatorname{P}}
\newcommand{\eye}{\operatorname{Id}}
\newcommand{\myF}{\operatorname{F}}
\newcommand{\Vol}{\operatorname{Vol}}
\newcommand{\odd}{\operatorname{odd}}
\newcommand{\even}{\operatorname{even}}
\newcommand{\ol}{\overline}
\newcommand{\mye}{\operatorname{E}}
\newcommand{\myo}{\operatorname{o}}
\newcommand{\myt}{\operatorname{t}}
\newcommand{\irr}{\operatorname{Irr}}
\newcommand{\mydiv}{\operatorname{div}}
\newcommand{\curl}{\operatorname{curl}}
\newcommand{\re}{\operatorname{Re}}
\newcommand{\im}{\operatorname{Im}}
\newcommand{\can}{\operatorname{can}}
\newcommand{\scal}{\operatorname{scal}}
\newcommand{\tr}{\operatorname{trace}}
\newcommand{\sgn}{\operatorname{sgn}}
\newcommand{\SL}{\operatorname{SL}}
\newcommand{\myspan}{\operatorname{span}}
\newcommand{\mydet}{\operatorname{det}}
\newcommand{\SO}{\operatorname{SO}}
\newcommand{\SU}{\operatorname{SU}}
\newcommand{\specl}{\operatorname{spec_{\mathcal{L}}}}
\newcommand{\fix}{\operatorname{Fix}}
\newcommand{\id}{\operatorname{id}}
\newcommand{\grad}{\operatorname{grad}}
\newcommand{\singsup}{\operatorname{singsupp}}
\newcommand{\wave}{\operatorname{wave}}
\newcommand{\ind}{\operatorname{ind}}
\newcommand{\mynull}{\operatorname{null}}
\newcommand{\inj}{\operatorname{inj}}
\newcommand{\arcsinh}{\operatorname{arcsinh}}
\newcommand{\Spec}{\operatorname{Spec}}
\newcommand{\Ind}{\operatorname{Ind}}
\newcommand{\Nul}{\operatorname{Nul}}
\newcommand{\inrad}{\operatorname{inrad}}
\newcommand{\mult}{\operatorname{mult}}
\newcommand{\Length}{\operatorname{Length}}
\newcommand{\Area}{\operatorname{Area}}
\newcommand{\Ker}{\operatorname{Ker}}
\newcommand{\floor}[1]{\left \lfloor #1  \right \rfloor}

\newcommand\restr[2]{{
  \left.\kern-\nulldelimiterspace 
  #1 
  \vphantom{\big|} 
  \right|_{#2} 
  }}
 
\section{Introduction} 
  
Let $(M^n, g)$ be a \textit{static manifold}, that is, a complete Riemannian manifold without boundary equipped with a nontrivial static potential $V$ satisfying
\begin{equation}\label{static}
\Hess_g V = \Delta_g V \, g + V \, \mathrm{Ric}_g,
\end{equation}
where $\Hess_g$ is the Hessian, $\Delta_g=\tr_g\Hess_g$ is the Laplacian of $g$, and $\mathrm{Ric}_g$ is its Ricci curvature. Static manifolds have garnered significant attention since their introduction in~\cite{hawking2023large} in the context of \textit{static spacetimes}. They also play a pivotal role in the study of scalar curvature deformations; see, e.g., \cite{fischer1975deformations,bourguignon1975stratification,corvino2000scalar,chrusciel2003mapping,corvino2006asymptotics,delay2011localized,brendle2011deformations,qing2016scalar}. 

The classification of static manifolds is a fundamental problem of both physical and mathematical significance. Although a complete classification has not yet been achieved, numerous significant results have been established \cite{kobayashi1981conformally,kobayashi1982differential,lafontaine1983geometrie,boucher1984uniqueness,shen1997note,lafontaine2009remark,qing2013note}. In this paper, we contribute to this classification program. 

Before stating our main results, we recall some fundamental properties of static manifolds.

The first geometric obstruction for a metric $g$ to be static is the scalar curvature $R_g$: it is known that $R_g$ must be constant; see, e.g.,~\cite{fischer1975deformations,tod2000spatial,corvino2000scalar}.

Taking the metric trace in~\eqref{static} and using the fact that $R_g$ is constant, we obtain
$$
\Delta_g V = -\frac{R_g}{n-1} V,
$$
so $V$ is a Laplace eigenfunction with eigenvalue $-\dfrac{R_g}{n-1}$. 
This yields a second necessary condition: the number $-\dfrac{R_g}{n-1}$ must belong to the spectrum of the Laplace operator on $(M,g)$.

A third obstruction concerns the structure of the set of all static potentials. This set is clearly a vector space, which we denote by $\mathcal{P}$. Using the above identity, equation~\eqref{static} can be rewritten as
$$
\Hess_g V = \left( \mathrm{Ric}_g - \frac{R_g}{n-1} g \right) V,
$$
so that
$$
\mathcal{P} = \left\{ f \in C^\infty(M) \mid \Hess_g f = \left( \mathrm{Ric}_g - \frac{R_g}{n-1} g \right) f \right\}.
$$
It is known that $\dim \mathcal{P} \leqslant n+1$ (see~\cite[Corollary 2.4]{corvino2000scalar}).

In this paper, we investigate the relationship between $\dim \mathcal{P}$ and the geometric and topological properties of $(M,g)$ in the three-dimensional case, i.e. when $n=3$. This question was previously studied is such papers as~\cite{fischer1975deformations,tod2000spatial,corvino2000scalar,lafontaine2009remark,miao2015static,wang2024static}. Notably,~\cite{tod2000spatial} derived a \textit{local} expression for a static metric with $\dim \mathcal P \geqslant 2$. Explicitly, in suitable local coordinates $(x, r, \phi)$, the metric takes the form
\[
    \left(k_1+\frac{k_2}{r}+k_3r^2\right)dx^2+\frac{dr^2}{k_1+\dfrac{k_2}{r}+k_3r^2}+r^2d\phi^2,
\]
where $k_1, k_2, k_3$ are real constants with clear geometric interpretations (see Section~\ref{meaning}). While this result is local, our primary interest lies in global results.

In~\cite{he2015uniqueness}, the authors investigated the \textit{space of virtual $(\lambda, n+m)$-Einstein warping functions}, where $n$ denotes the manifold dimension, $\lambda\in \mathbb{R}$, and $m\in\mathbb{R}_+$. This space generalizes the space of static potentials, $\mathcal P$. Specifically, on a Riemannian manifold $(M,g)$ with constant scalar curvature $R_g$, $\mathcal P$ is a subspace of the virtual $\left(\dfrac{R_g}{n-1}, n+1\right)$-Einstein warping functions satisfying $\Delta_gV=-\dfrac{R_g}{n-1}V$. Corollary 2.4 of~\cite{he2015uniqueness} implies that if $\dim\mathcal P=n+1$ and $(M^n,g)$ is complete, then $(M^n,g)$ is either a simply connected space form or a circle. In this paper, we recover this result for the specific case $n = 3$.

The first main result of this work is the following theorem.

\begin{theorem}\label{T1}
Assume $\dim \mathcal{P}\geqslant 2$, and let $f_1, f_2$ be two linearly independent static potentials on $(M,g)$. If the zero sets intersect, i.e., $f_1^{-1}(0) \cap f_2^{-1}(0) \ne \varnothing$, then $(M,g)$ is a space form. Specifically, if $R_g > 0$, then $(M,g)$ is homothetic to the standard sphere and $\dim \mathcal{P}=4$. If $R_g < 0$, then $(M,g)$ is either homothetic to hyperbolic space with $\dim \mathcal{P}=4$, or a quotient thereof by a cyclic group generated by a hyperbolic or parabolic element in $O(2,1)$, in which case $\dim \mathcal{P}=2$. 
\end{theorem}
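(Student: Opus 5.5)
The plan is to reduce the statement to a local rigidity result near a common zero of $f_1$ and $f_2$, and then to classify the complete quotients.

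\emph{Step 1 (geometry of the zero sets).} Recall the standard facts for a nontrivial static potential $V$. The restriction of $V$ to a unit-speed geodesic $\gamma$ satisfies the linear ODE
$$
(V\circ\gamma)''=\Bigl(\Ric_g(\gamma',\gamma')-\tfrac{R_g}{2}\Bigr)V\circ\gamma .
$$
Hence $V$ is determined by $V(p)$ and $\nabla V(p)$, and $\nabla V\neq 0$ on $V^{-1}(0)$. Moreover $\Sigma=V^{-1}(0)$ is a smooth, totally geodesic surface, since $\Hess_g V=0$ on $\Sigma$.

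Fix $p\in f_1^{-1}(0)\cap f_2^{-1}(0)$. The gradients $\nabla f_1(p)$ and $\nabla f_2(p)$ are linearly independent. Otherwise some combination $af_1+bf_2$ would vanish to first order at $p$, hence vanish identically by the ODE, contradicting linear independence. So for every $\theta$ the potential $f_\theta=\cos\theta\, f_1+\sin\theta\, f_2$ has a totally geodesic zero surface $\Sigma_\theta$ through $p$. Its normal $\nu_\theta$ at $p$ sweeps out the plane $\Pi=\myspan\{\nabla f_1(p),\nabla f_2(p)\}$.

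All $\Sigma_\theta$ contain the geodesic $\gamma$ through $p$ in the direction $e\perp\Pi$. Along $\gamma$ both $f_i$ vanish, so $\gamma\subset f_1^{-1}(0)\cap f_2^{-1}(0)$. The Codazzi equation for the totally geodesic $\Sigma_\theta$ gives $\Ric_g(\nu_\theta,X)=0$ for all $X\in T\Sigma_\theta$. Letting $\theta$ vary, this forces $\Ric_g$ along $\gamma$ to have the form $\mathrm{diag}(\lambda,\lambda,\mu)$ in the frame $(\Pi, e)$. Thus the pencil $\{f_\theta\}$ of potentials is rotationally symmetric about $\gamma$ to first order.

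\emph{Step 2 (local rigidity), which I expect to be the main obstacle.} Near $\gamma$ I would use Tod's local normal form recalled in the introduction:
$$
\Bigl(k_1+\tfrac{k_2}{r}+k_3r^2\Bigr)dx^2+\frac{dr^2}{k_1+\frac{k_2}{r}+k_3r^2}+r^2d\phi^2 ,
$$
valid where $\dim\mathcal P\geqslant 2$. The aim is to identify the common zero set $\gamma$ with the axis $\{r=0\}$, where the two potentials are essentially $r\cos\phi$ and $r\sin\phi$. The delicate part is justifying this identification: one must check that $\nabla f_1$ and $\nabla f_2$ generate the rotation about $\gamma$ and that Tod's coordinates extend up to $\gamma$.

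Granting this, smoothness of $g$ across the axis forces $k_2=0$, since the term $k_2/r$ would produce curvature blowing up like $r^{-3}$. With $k_2=0$ the metric is locally of constant sectional curvature. Alternatively, I would try a direct route: show that the traceless Ricci tensor vanishes along $\gamma$ and propagate this using the ODE satisfied by the Cotton tensor. In either case, static metrics are real-analytic (they are Einstein-type in harmonic coordinates), so $g$ has constant curvature on all of $M$ by unique continuation.

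\emph{Step 3 (global classification).} By completeness, $(M,g)$ is a quotient $\widetilde M/\Gamma$ of $S^3$, $\mathbb R^3$ or $\mathbb H^3$ (up to scaling). On the universal cover $\mathcal P$ consists of restrictions of linear functions on $\mathbb R^4$, respectively $\mathbb R^{3,1}$, so $\dim\mathcal P=4$ there. The potentials on $M$ are exactly the $\Gamma$-invariant ones, that is, those given by vectors in $W=\mathrm{Fix}(\Gamma)$.

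For $S^3$, a nontrivial free action fixes no nonzero vector, so $\Gamma$ is trivial and $\dim\mathcal P=4$. For $\mathbb H^3$ we need $\dim W\geqslant 2$, so $\Gamma$ acts on $W^\perp$ of dimension at most $2$. The hypothesis that the zero sets intersect means that $W^\perp$ meets the hyperboloid (timelike directions), and discreteness plus freeness then make $\Gamma$ cyclic. Since $\Gamma$ fixes $W$ pointwise, it acts within a copy of $O(2,1)$, and its generator must be hyperbolic or parabolic because elliptic elements have fixed points. In that case $\dim W=2$, which gives $\dim\mathcal P=2$. The flat case $R_g=0$, where potentials are affine functions, is handled in the same way if it is to be included.
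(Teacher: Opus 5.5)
\textbf{The gap is Step 2.} It carries the entire rigidity content of the theorem, and you leave it as ``granting this''. Your Step 1 is correct, but it only gives pointwise algebraic information at points of the curve $\gamma$. Combined with Lemma~\ref{lem1}(iii) applied to each $\Sigma_\theta$, it would even give $\lambda=\mu$ on $\gamma$. Still, nothing in the proposal converts information along a curve into information on an open set.

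The Tod route is not justified as written. Tod's normal form is purely local. To run the ``$k_2/r^3$ blows up'' argument, you would need a Tod chart on a region accumulating at $\gamma$ in which the coordinate $r$ provably tends to $0$ at $\gamma$, for instance via an intrinsic expression of $r$ in terms of $f_1,f_2$ and their gradients. That identification is exactly what you defer. The Cotton-tensor alternative is not carried out either, and an ODE along $\gamma$ can only control data on $\gamma$.

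\textbf{The missing ingredient is the first integral of Lemma~\ref{lem4}.} Since $\Sigma=f_1^{-1}(0)$ is totally geodesic and $\Ric|_{T\Sigma}$ is a multiple of $g_\Sigma$, the restriction of $f_2$ satisfies $\Hess_\Sigma f_2=\frac12\bigl(K-\frac{R_g}{2}\bigr)f_2\,g_\Sigma$. Taking the divergence shows that $\bigl(K-\frac{R_g}{6}\bigr)f_2^3$ is constant on each component of $\Sigma$.
\begin{itemize}
\item On the component through $p$ this constant vanishes, because $f_2(p)=0$.
\item Since $f_2$ vanishes on that component only along a curve, $K\equiv R_g/6$ on the whole component.
\item The Gauss equation $K=-R_{33}+\frac{R_g}{2}$, together with $R_{11}=R_{22}$ (Lemma~\ref{lem1}(iv)), turns this into the Einstein condition along the surface.
\end{itemize}
The paper then pushes the Einstein condition off $P_1$ using the warped product form of Lemma~\ref{lem3} and Lemma~\ref{lem6}. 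It globalizes by analyticity on the connected set $M\setminus S$.

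Your pencil actually gives a shortcut. Each $\Sigma_\theta$ contains $\gamma$, where $f_{\theta+\pi/2}$ vanishes, so $g$ is Einstein along every $\Sigma_\theta$. By a uniform implicit function argument ($|\nabla f_\theta(p)|$ is bounded below in $\theta$), these sheets sweep out a neighbourhood of $p$. Analyticity then finishes.

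\textbf{Step 3 is fine.} Your observation that $W^\perp$ must contain timelike vectors makes $W$ spacelike, so $\Gamma$ is a group of boosts of the Lorentzian plane $W^\perp$. Hence parabolic generators cannot actually occur under the hypothesis: those quotients have $\dim\mathcal P=2$, but their potentials have pairwise disjoint zero sets. This is harmless, since the statement is a disjunction.
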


We note in passing that the quotient manifolds arising when $R_g<0$ have the following topological and geometric properties (see, e.g., Chapters 3-5 in~\cite{thurston2022geometry}). The quotient of $\mathbb H^3$ by a cyclic group generated by a hyperbolic element in $O(2,1)$ is a complete hyperbolic 3-manifold homeomorphic to $\mathbb S^1\times \mathbb R^2$. It contains a single closed geodesic and has infinite volume. The quotient of $\mathbb H^3$ by a cyclic group generated by a parabolic element in $O(2,1)$  is also homeomorphic to $\mathbb S^1\times \mathbb R^2$ but geometrically it has a cusp (a rank-1 cusp) instead of a closed geodesic. The metric near the cusp is modelled on a horosphere quotient.

The case where $R_g<0$ was recently considered in~\cite[Theorem 4.1]{wang2024static}. The case where $R_g=0$ was previously considered in~\cite{miao2015static}: in the assumption on $f_1$ and $f_2$ as in the previous theorem, the manifold must be flat. We complete this result by giving a precise description of all 3-dimensional flat static manifolds satisfying the above assumptions.

\begin{theorem}\label{T2}
Let $R_g=0$. Assume $\dim \mathcal{P}\geqslant 2$, and let $f_1, f_2$ be two linearly independent static potentials. If the zero sets intersect, i.e., $f_1^{-1}(0) \cap f_2^{-1}(0) \ne \varnothing$, then $(M,g)$ is isometric to either $\mathbb E^3$ and $\dim\mathcal{P}=4$ or $\mathbb{S}^1 \times \mathbb{E}^2$ (endowed with the product metric) and $\dim\mathcal{P}=3$.
\end{theorem}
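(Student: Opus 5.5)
The plan is to take flatness from the earlier work \cite{miao2015static}, which shows that under these hypotheses with $R_g=0$ the manifold is flat, and then classify the complete flat $3$-manifolds that carry at least two linearly independent static potentials whose zero sets meet. On a flat manifold the static equation reduces to $\Hess_g f=0$. Lifting to the universal cover $\mathbb E^3$, every static potential becomes an affine function $\tilde f(x)=\langle a,x\rangle+b$. Conversely, an affine function descends to $M=\mathbb E^3/\Gamma$ if and only if it is $\Gamma$-invariant. So $\mathcal P$ is identified with the space of $\Gamma$-invariant affine functions on $\mathbb E^3$, where $\Gamma\subset\mathrm{Isom}(\mathbb E^3)$ is a discrete group acting freely (a Bieberbach-type group, not necessarily cocompact).

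Next I would compute the invariance condition. Write $\gamma(x)=Ax+t$. Then $\tilde f\circ\gamma=\tilde f$ means
\[
A^{T}a=a \quad\text{and}\quad \langle a,t\rangle=0 \qquad\text{for all }\gamma\in\Gamma .
\]
Let $W\subset\mathbb R^3$ be the space of vectors $a$ satisfying both conditions. Since constants are always invariant, $\dim\mathcal P=1+\dim W$. The hypothesis $\dim\mathcal P\geqslant 2$ gives $\dim W\geqslant 1$. The hypothesis that $f_1^{-1}(0)\cap f_2^{-1}(0)\neq\varnothing$ means the lifted zero planes intersect, so the gradients $a_1,a_2$ are linearly independent, since parallel distinct planes do not meet. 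Hence $\dim W\geqslant 2$. Every element of $\Gamma$ then fixes $W$ pointwise in its linear part and translates orthogonally to $W$.

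The remaining step is to classify such $\Gamma$, which I regard as the main point; it is a short argument. Suppose $\dim W=3$. Then every $A=I$ and every $t=0$, so $\Gamma$ is trivial, $M=\mathbb E^3$, and $\dim\mathcal P=4$. Suppose $\dim W=2$, and let $\ell=W^\perp$. Each $A$ fixes $W$ pointwise and is orthogonal, so $A$ is either the identity or the reflection across $W$. In both cases $t\in\ell$. If $A$ is the reflection, then $\gamma$ acts on each line $x+\ell$ as $s\mapsto -s+c$, which has a fixed point; this contradicts freeness. Therefore $\Gamma$ consists of translations along $\ell$. Being discrete and nontrivial (nontrivial because $\dim W\neq 3$), it is infinite cyclic, generated by a translation of length $L$. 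Thus $M\cong\mathbb S^1\times\mathbb E^2$ with the product metric, and $\dim\mathcal P=3$.

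The only delicate point is to justify completeness and the reduction to the affine picture, namely that $\mathcal P$ injects into the invariant affine functions on the universal cover. This is standard for flat complete manifolds.
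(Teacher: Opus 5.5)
Your proof is correct, and for the classification step it takes a different route from the paper. The first part is the same in both. Flatness comes from \cite{miao2015static}, or equivalently from the proof of Theorem~\ref{T1}, since a three-dimensional Einstein metric with $R_g=0$ is flat. Both proofs then identify $\mathcal P$ with the $\Gamma$-invariant affine functions on $\mathbb E^3$.

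After that the routes diverge. The paper goes through a tabulated list $N_1,\dots,N_{16}$ of flat 3-manifolds and computes the invariant subspace of $\mathrm{span}\{1,x,y,z\}$ case by case, keeping only $N_1$ and $N_2$. You instead turn invariance into the conditions $A^{T}a=a$ and $\langle a,t\rangle=0$. The intersecting zero sets then give two independent gradients in $W$. From $\dim W\geqslant 2$ you show directly that $\Gamma$ is either trivial or infinite cyclic, generated by a translation along $W^{\perp}$. The reflection across $W$ is excluded because it fixes $t/2$.

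Your route is self-contained and does not depend on the table being complete. In the paper the table is partly only sketched ($N_{14}$, $N_{15}$). It also omits, for instance, quotients of $\mathbb E^3$ by a single screw motion, which are diffeomorphic to $\mathbb S^1\times\mathbb R^2$ but not isometric to the product. Your computation disposes of these automatically, since for them $W=0$.

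The paper's case-by-case route records $\dim\mathcal P$ for every entry of the table. It reuses this in the proof of Theorem~\ref{T3} for the cases with $\dim\mathcal P=2$. In your framework that would require a separate analysis of $\dim W=1$, which this theorem does not need.
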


We prove this theorem in Appendix~\ref{appC}.

For convenience in subsequent discussions, we introduce the notion of \textit{nice} manifolds.

\begin{definition}\label{def1}
We say that a Riemannin three-manifold $(M,g)$ is \emph{nice} if the eigenvalues of its Ricci tensor are of the form $(\lambda,\lambda,0)$, where $\lambda=const$. 
\end{definition}

An example of a nice metric is a flat metric, for which \(\lambda = 0\); this case was previously considered in~\cite{miao2015static}. Other examples include product metrics on \(\mathbb{R} \times \Sigma\), where \(\Sigma\) is either a round sphere or a hyperbolic plane of some radius, as well as quotients of such products. Moreover, by the Cheeger-Gromol splitting theorem, when \(\lambda > 0\), the universal cover of such a manifold splits isometrically as $\mathbb{R} \times \mathbb S^2$.

The second main result of this work is the following theorem.

\begin{theorem}\label{T3}
Assume $\dim \mathcal{P} \geqslant 2$, and let $f_1, f_2$ be two linearly independent static potentials on $(M,g)$. 
If $f_1^{-1}(0)\neq\varnothing$ is closed and $f_1^{-1}(0) \cap f_2^{-1}(0) = \varnothing$, then $(M,g)$ is nice and $\dim \mathcal{P} = 2$. 
Specifically, $(M,g)$ is homothetic to one of the following product manifolds (endowed with the product metric):
\begin{itemize}
    \item $R_g > 0$: $\mathbb{R} \times \mathbb{S}^2$, $\mathbb{S}^1 \times \mathbb{S}^2$, or $\mathbb{S}^1 \times \mathbb{R}P^2$.
    \item $R_g = 0$: $\mathbb{R} \times \mathbb{T}^2$, where $\mathbb{T}^2$ is a flat torus, or $\mathbb{R} \times \mathbb{KL}$, where $\mathbb{KL}$ is a flat Klein bottle.
\end{itemize}
\end{theorem}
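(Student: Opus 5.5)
Let $\Sigma=f_1^{-1}(0)$ and $A=\mathrm{Ric}_g-\frac{R_g}{2}g$, so every $f\in\mathcal P$ satisfies $\Hess_g f=Af$. The plan has four steps.

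\emph{Step 1: structure of $\Sigma$.} Since $\Hess f_1=0$ on $\Sigma$, $|\nabla f_1|$ is locally constant on $\Sigma$. Because $f_1$ is nontrivial and solves a second-order linear ODE along geodesics, $\nabla f_1\neq 0$ on $\Sigma$. Hence $\Sigma$ is a smooth, closed, totally geodesic hypersurface, with the unit normal $\nu=\nabla f_1/|\nabla f_1|$.

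\emph{Step 2: pin down $\mathrm{Ric}$ on $\Sigma$.} Since $f_2\neq0$ on $\Sigma$, we may normalize $f_2>0$ on a component $\Sigma_0$ of $\Sigma$. On $\Sigma$ the static equation for $f_2$, restricted to tangential directions, together with $\Sigma$ being totally geodesic, gives
\[
\Hess_\Sigma (f_2|_\Sigma)=A|_{T\Sigma}\, f_2+(\partial_\nu f_2)\,\mathrm{II}=A|_{T\Sigma}\, f_2 .
\]
Using $f_1$ instead, the mixed Hessian terms vanish on $\Sigma$, so $\mathrm{Ric}(\nu,X)=0$ for $X$ tangent to $\Sigma$. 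The Gauss equation gives $\mathrm{Ric}|_{T\Sigma}=K g_\Sigma+\mathrm{Rm}(\cdot,\nu,\cdot,\nu)$ and $K_\Sigma=\frac{R_g}{2}-\mathrm{Ric}(\nu,\nu)$.

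Next differentiate $\Hess f_1=Af_1$ in the normal direction at $\Sigma$ (the Codazzi-type identity, as in Miao--Tam). This gives $\nabla_\nu A=0$ restricted to $\Sigma$ and, via the contracted Bianchi identity, information on the mixed components. Combining these should show that $\mathrm{Ric}(\nu,\nu)=0$ on $\Sigma$ and that $\mathrm{Ric}|_{T\Sigma}=\frac{R_g}{2}g_\Sigma$. Then $\Sigma$ has constant curvature $K=R_g/2$.

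Now $u=f_2|_\Sigma>0$ satisfies $\Hess_\Sigma u=0$ on the closed surface $\Sigma_0$, so $u$ is constant. Consistency with $\Hess_\Sigma u=A|_{T\Sigma}u$ then forces $A|_{T\Sigma}=0$, i.e. $\mathrm{Ric}|_{T\Sigma}=\frac{R_g}{2}g_\Sigma$ as above. This matches the nice eigenvalue pattern $(\lambda,\lambda,0)$ with $\lambda=R_g/2$ along $\Sigma$. Closedness of $\Sigma_0$ together with $K_\Sigma=R_g/2$ and Gauss--Bonnet already rules out $R_g<0$. The flat case $R_g=0$ gives a flat torus or Klein bottle, and $R_g>0$ gives $\mathbb S^2$ or $\mathbb RP^2$.

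\emph{Step 3: propagate to all of $M$ (main obstacle).} The goal is to show that $M$ is a warped, and in fact an honest, product $\mathbb R\times\Sigma_0$ (or a quotient) near $\Sigma_0$, and then that analyticity gives this everywhere. Static metrics are real-analytic in harmonic coordinates, so local information propagates globally.

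My first attempt: consider $h=f_2\nabla f_1-f_1\nabla f_2$, or the gradient field of $f_1/f_2$ on the region where $f_2>0$. I would check that $\nabla(f_1/f_2)$ has constant length and is parallel. The static equation gives $\Hess(f_1/f_2)$ as a combination of $d(f_1/f_2)\otimes d\log f_2$ terms. Showing $\nabla f_2\equiv0$ along the flow, i.e. $\partial_\nu f_2=0$ on $\Sigma$, would then give $\Hess f_2=0$ near $\Sigma$. That in turn forces $A f_2=0$, so $A=0$ there, and $\mathrm{Ric}$ has eigenvalues $(R_g/2,R_g/2,0)$ on an open set, hence everywhere by analyticity.

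With $\nabla f_2$ parallel-like and $f_1$ linear along a parallel field, de Rham splitting yields a product of $\mathbb R$ with a surface of constant curvature. Completeness then gives the universal cover $\mathbb R\times\widetilde\Sigma$, with a quotient by a group preserving the potentials.

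\emph{Step 4: $\dim\mathcal P=2$ and the list.} On $\mathbb R\times\Sigma$, with $\mathrm{Ric}=\lambda g_\Sigma$ and $A=-\lambda\,dt^2$, the static equation for any $f$ gives $\partial_t^2 f=-\lambda f$ and $\Hess_\Sigma f=0$ in the $\Sigma$ directions. The second condition forces $f$ to be constant on compact slices. So $f=f(t)$ solves a second-order ODE, which gives $\dim\mathcal P=2$. Since $f_1$ vanishes only on the closed set $\Sigma$ and the potentials must be invariant under the deck group, we can classify the allowed quotients:
\begin{itemize}
\item For $\lambda>0$, $f=a\cos\sqrt\lambda t+b\sin\sqrt\lambda t$, so translations and compatible involutions give $\mathbb S^1\times\mathbb S^2$ and $\mathbb S^1\times\mathbb RP^2$.
\item For $\lambda=0$, $f=a+bt$, which forbids quotients in $t$, leaving $\mathbb R\times\mathbb T^2$ and $\mathbb R\times\mathbb{KL}$.
\end{itemize}
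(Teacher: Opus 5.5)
\textbf{Gap 1: Step 2 is circular and never proves $K\equiv R_g/2$.}

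Step~2 does not establish the key fact $K\equiv R_g/2$ on $\Sigma$. The local analysis at $\Sigma$ gives only the following:
\begin{itemize}
\item $\Ric(\nu,X)=0$. This is the Codazzi equation for a totally geodesic surface, not a consequence of $\Hess f_1=0$ there.
\item $R_{11}=R_{22}$ (Lemma~\ref{lem1}).
\item Hence $A|_{T\Sigma}=\frac12\bigl(K-\frac{R_g}{2}\bigr)g_\Sigma$ and $\Hess_\Sigma u=\frac12\bigl(K-\frac{R_g}{2}\bigr)u\,g_\Sigma$ for $u=f_2|_\Sigma$, which is Lemma~\ref{lem4}.
\end{itemize}
Differentiating $\Hess f_1=Af_1$ along $\nu$ yields $\nabla_\nu\Hess f_1=|\nabla f_1|\,A$ on $\Sigma$; it says nothing about $\nabla_\nu A$.

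Moreover, $\Ric(\nu,\nu)=0$ cannot come out of any local computation. In the Euclidean Schwarzschild example of Section~\ref{examples}, $V_2=\pm r\neq0$ on $\Sigma=V_1^{-1}(0)$, yet $\Ric(\nu,\nu)=-2m/r^3$. The only hypothesis that fails there is compactness of $\Sigma$.

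Your next sentence then uses $\Hess_\Sigma u=0$, i.e.\ $A|_{T\Sigma}=0$, in order to conclude $A|_{T\Sigma}=0$. That is circular.

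What is missing is a global rigidity statement on the closed surface: a positive solution of $\Hess_\Sigma u=\frac12\bigl(K-\frac{R_g}{2}\bigr)u\,g_\Sigma$ forces $K\equiv R_g/2$ and $u$ constant. The paper imports this from Proposition~24 of Ambrozio. You could also prove it directly:
\begin{itemize}
\item By Lemma~\ref{lem4}, $\bigl(K-\frac{R_g}{6}\bigr)u^3\equiv C$.
\item Since $\nabla u$ is a conformal vector field, the Kazdan--Warner (Bourguignon--Ezin) identity $\int_\Sigma\langle\nabla K,\nabla u\rangle\,dA=0$ becomes $C\int_\Sigma u^{-4}|\nabla u|^2\,dA=0$.
\item Either $u$ is constant, which gives $K=R_g/2$.
\item Or $C=0$; then integrating $\Delta_\Sigma u=-\frac{R_g}{3}u$ with $u>0$ gives $R_g=0=K$.
\end{itemize}

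\textbf{Gap 2: Step 3 aims at a false target.}

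If $\nabla f_2\equiv0$ near $\Sigma$, then $Af_2=0$ gives $\Ric=\frac{R_g}{2}g$, whose trace forces $R_g=0$. This is not the pattern $(R_g/2,R_g/2,0)$ you then assert. It also fails already on $\mathbb R\times\mathbb S^2$ with $f_2=\cos t$, where $\Hess f_2=-\cos t\,dt^2\neq0$; your own Step~4 uses $A=-\lambda\,dt^2\neq0$.

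The propagation that works is the paper's:
\begin{itemize}
\item By Lemma~\ref{lem3}, near $\Sigma$ the metric is $f_2^2dt^2+g_0$, with totally geodesic slices $\{f_1/f_2=\mathrm{const}\}$ locally isometric to $\Sigma$.
\item So every slice has $K=R_g/2$, hence $\Ric(\nu,\nu)=0$ by the Gauss equation.
\item Lemma~\ref{lem2} (three distinct Ricci eigenvalues force $\dim\mathcal P\leqslant1$), together with continuity, then gives eigenvalues $(R_g/2,R_g/2,0)$ near $\Sigma$.
\item Analyticity spreads this to all of $M$.
\end{itemize}

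\textbf{Error 3: $R_g<0$ is not ruled out.}

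Your Gauss--Bonnet remark is wrong. On $\mathbb R\times\Sigma_\gamma$, with $\Sigma_\gamma$ a closed hyperbolic surface, $f_1=\sinh t$ and $f_2=\cosh t$ satisfy all the hypotheses. The theorem still claims niceness and $\dim\mathcal P=2$ in that case, which follows from Proposition~\ref{prop2} since $g$ is not Einstein. It simply gives no classification list there.
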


\begin{remark}
The assumptions of the previous theorem also include the case where $f_2^{-1}(0)=\varnothing$. 
\end{remark}

The classification in the case where $R_g<0$ is a delicate question: to the best of our knowledge, nice Riemannian manifolds with $\lambda<0$ are not fully classified (see e.g~\cite{brooks20253} for some partial results). Examples of nice static manifolds are $\mathbb{R} \times \mathbb{H}^2$ and $\mathbb{R} \times \Sigma$, where $\Sigma$ is a hyperbolic surface, endowed with the product metric

Observe that theorems~\ref{T1} and~\ref{T3} imply Theorem 1.1 from \cite{lafontaine2009remark} for the case where $R_g>0$ and $(M,g)$ is closed. 

Notice that the assumptions on the zero sets of static potentials in Theorems~\ref{T1}, \ref{T2} and \ref{T3} are mutually excluded unless $(M,g)$ is flat. 

\begin{corollary}
Assume $\dim \mathcal{P}\geqslant 2$. Let $f_1, f_2$ and $f_3,f_4$ be two pairs of linearly independent static potentials $(M,g)$. Let $f_1^{-1}(0) \cap f_2^{-1}(0) \ne \varnothing$, $f_3^{-1}(0)\neq \varnothing$ is closed and $f_3^{-1}(0) \cap f_4^{-1}(0)=\varnothing$. Then $(M,g)$ is flat.
\end{corollary}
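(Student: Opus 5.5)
The plan is to apply Theorems~\ref{T1} and~\ref{T3} to the two pairs of potentials separately and check that the two conclusions are compatible only in the flat case.

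\textbf{First pair.} Since $f_1^{-1}(0)\cap f_2^{-1}(0)\neq\varnothing$, Theorem~\ref{T1} shows that $(M,g)$ is a space form. It remains to pin down the sign of the curvature. If $R_g>0$, then $(M,g)$ is homothetic to the round sphere and $\dim\mathcal P=4$. If $R_g<0$, then $(M,g)$ is either hyperbolic space with $\dim\mathcal P=4$, or one of the cyclic quotients with $\dim\mathcal P=2$. In every case the Ricci eigenvalues are $(c,c,c)$ with $c\neq0$ whenever $R_g\neq 0$.

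\textbf{Second pair.} Since $f_3^{-1}(0)\neq\varnothing$ is closed and $f_3^{-1}(0)\cap f_4^{-1}(0)=\varnothing$, Theorem~\ref{T3} shows that $(M,g)$ is nice, with Ricci eigenvalues $(\lambda,\lambda,0)$, and that $\dim\mathcal P=2$.

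\textbf{Comparison.} Matching the eigenvalue triples $(c,c,c)$ and $(\lambda,\lambda,0)$ forces $c=0$, and hence $\lambda=0$. Therefore $\Ric_g\equiv0$. In dimension three this is equivalent to flatness, since the Weyl tensor vanishes identically and the curvature tensor is determined by Ricci. This contradicts $R_g\neq0$, so the only possibility is $R_g=0$ and $(M,g)$ is flat.

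One can also argue via dimension counts, though this alone does not finish the proof. The cases $\dim\mathcal P=4$ are incompatible with $\dim\mathcal P=2$, but the cyclic hyperbolic quotients have $\dim\mathcal P=2$ and are not excluded this way. The Ricci eigenvalue comparison is what rules them out.

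\textbf{Consistency check.} The argument involves no real obstacle. The one point to verify is that the flat case is genuinely consistent, so that the hypotheses are not contradictory. For this, compare with Theorem~\ref{T2}, which gives $\mathbb E^3$ or $\mathbb S^1\times\mathbb E^2$ with $\dim\mathcal P\geqslant3$. Theorem~\ref{T3} instead requires $\dim\mathcal P=2$ and product manifolds $\mathbb R\times\mathbb T^2$ or $\mathbb R\times\mathbb{KL}$. These lists may appear disjoint, but the corollary only claims flatness, and flatness follows directly from the Ricci comparison above regardless of whether any manifold satisfies both hypotheses. I would simply conclude that $(M,g)$ is flat.
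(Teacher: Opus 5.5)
Your argument is correct and is exactly the reasoning the paper intends (cf.\ the remark just before the corollary): Theorem~\ref{T1} makes $(M,g)$ a space form, Theorem~\ref{T3} makes it nice with Ricci eigenvalues $(\lambda,\lambda,0)$, and these are compatible only if $\Ric_g\equiv 0$, which in dimension three means flat. Your consistency check can be sharpened: once flatness is known, Theorem~\ref{T2} applied to $f_1,f_2$ gives $\dim\mathcal P\geqslant 3$ while Theorem~\ref{T3} gives $\dim\mathcal P=2$, so the hypotheses are never jointly satisfied and the corollary holds vacuously --- as you say, this does not affect the validity of your proof.
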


Finally, we notice that the assumption that $f_1^{-1}(0)$ is closed in Theorem~\ref{T3} is essential. In Section~\ref{examples} we provide some examples of static manifolds with $\dim\mathcal P=2$ such that the zero sets of two linearly independent static potentials are disjoint but both not compact. For these manifolds one has $R_{11}=R_{22}\neq R_{33}$ but $R_{ii}\neq const,\ i=1,2,3$, i.e. $(M,g)$ is not nice.

\medskip

In the second part of the paper, we focus on \textit{static manifolds with boundary}. 
Namely, we study Riemannian metrics $g$ on $M$ for which the following overdetermined Robin problem admits nontrivial solutions:
\begin{equation}\label{b-static}
\left\{
\begin{array}{rcl}
\Hess_g V &=& \Delta_g V \, g + V \, \mathrm{Ric}_g \quad \text{in } M, \\
\dfrac{\partial V}{\partial \nu} \, g_{\partial M} &=& B_{\partial M} \, V \quad \text{on } \partial M.
\end{array}
\right.
\end{equation}
Here, $g_{\partial M}$ and $B_{\partial M}$ denote the induced metric and the second fundamental form of $\partial M$ with respect to the outward unit normal $\nu$, and $H_{\partial M} = \tr_{g_{\partial M}} B_{\partial M}$ is the corresponding mean curvature. Our sign convention is chosen so that the unit sphere in Euclidean space has mean curvature equal to $2$.

As in the boundaryless case, we refer to solutions of~\eqref{b-static} as \textit{static potentials}. 
Slightly abusing notation, we denote by $\mathcal{P}$ the space of static potentials on $(M,g)$.

Taking the metric trace, we find that any static potential $V$ satisfies the \textit{Robin eigenvalue problem}
\begin{equation}\label{Robin}
\left\{
\begin{array}{rcl}
\Delta_g V &=& -\dfrac{R_g}{n-1} \, V \quad \text{in } M, \\
\dfrac{\partial V}{\partial \nu} &=& \dfrac{H_{\partial M}}{n-1} \, V \quad \text{on } \partial M.
\end{array}
\right.
\end{equation}
It is known that for a static manifold with boundary $R_g$ and $H_{\partial M}$ are some constants (see, e.g., Proposition 1 in~\cite{cruz2023static}).

The theory of static manifolds with boundary develops in close analogy with the classical theory of boundaryless static manifolds. In particular, a number of recent works -- such as~\cite{cruz2019prescribing,ho2020deformation,huang2022scalar,cruz2023critical,sheng2024localized} -- focus on the deformation theory of the curvature functional
$$
g \in \mathcal{R}(M) \longmapsto (R_g,\, H_{\partial M}),
$$
where $\mathcal{R}(M)$ denotes the space of Riemannian metrics on a manifold $M$ with boundary. Meanwhile, the geometric structure and rigidity properties of static manifolds with boundary are investigated in~\cite{cruz2023static,medvedev2024static,sheng2025static,medvedev2025some,sheng2026obata}. 

The term ``static manifold with boundary'' was coined in~\cite{almaraz2025rigidity}, alongside a study of deformations of the above functional and applications to general relativity. Such manifolds are relevant to the study of \textit{photon surfaces} (see e.g. \cite{claudel2001geometry,cederbaum2015uniqueness,raulot2021spinorial,cederbaum2025uniqueness}) and arise naturally in the context of the \textit{Bartnik conjecture}, as shown in~\cite{huang2022scalar}.

Our contribution in this context is the following theorem.

\begin{theorem}\label{thmbstat}
Let $(M,g)$ be a compact static manifold with boundary. If the zero set of a static potential is not empty and disjoint from $\partial M$, then $\dim \mathcal{P} = 1$.
\end{theorem}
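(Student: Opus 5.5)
The plan is to argue by contradiction. Suppose $V\in\mathcal P$ has nonempty zero set $\Sigma=V^{-1}(0)$ with $\Sigma\cap\partial M=\varnothing$, and that there is a second static potential $f$ linearly independent of $V$. I will combine two ingredients: the boundary condition in \eqref{b-static}, applied to $V$ and to $f$ simultaneously, and the local structure theory behind Theorems~\ref{T1} and~\ref{T3}.

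\textbf{Step 1 (boundary relation).} Write the Robin condition for both potentials:
\[
\frac{\partial V}{\partial\nu}\, g_{\partial M}=B_{\partial M}V,\qquad \frac{\partial f}{\partial\nu}\, g_{\partial M}=B_{\partial M}f .
\]
Multiply the first by $f$, the second by $V$, and subtract. This gives $\bigl(f\,\partial_\nu V-V\,\partial_\nu f\bigr)g_{\partial M}=0$. Hence the vector field
\[
X=V\nabla f-f\nabla V
\]
is tangent to $\partial M$. Since $V\neq 0$ on $\partial M$, this is equivalent to $\partial_\nu(f/V)=0$ on $\partial M$. This step uses nothing beyond \eqref{b-static}.

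\textbf{Step 2 (interior structure).} Recall the standard facts:
\begin{itemize}
\item $\Sigma$ is a regular, totally geodesic closed surface in the interior. Regularity holds because $V=dV=0$ at a point forces $V\equiv0$ by the ODE along geodesics given by \eqref{static}.
\item $|\nabla V|$ is locally constant on $\Sigma$.
\item Static metrics are real analytic in harmonic coordinates.
\end{itemize}
Now split into two cases, mirroring Theorems~\ref{T1} and~\ref{T3}. The key point is that the local parts of those proofs only use the equations near $\Sigma$, not completeness.
\begin{enumerate}
\item[(A)] If $f$ vanishes somewhere on $\Sigma$, the Miao--Tam analysis at an intersection point gives constant sectional curvature near it. By analyticity, $g$ then has constant curvature on all of $M$. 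Locally $V$ and $f$ are restrictions of linear functions in the hyperboloid/sphere/Euclidean model, and $X$ is a Killing field generating rotations about the codimension-two axis $\{V=f=0\}$.
\item[(B)] If $f\neq0$ on $\Sigma$, the local argument of Theorem~\ref{T3} near the closed surface $\Sigma$ shows the following. The metric is a product $dt^2+g_\Sigma$ with $\Sigma=\{t=t_0\}$, and $V,f$ depend only on $t$, e.g.\ of the form $\sin,\cos$, or $1,t$, or $\sinh,\cosh$ in $t$. Analyticity propagates this over $M$. In particular $X$ is a nonzero multiple of $\partial_t$.
\end{enumerate}

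\textbf{Step 3 (contradiction).} In both cases $X$ is tangent to $\partial M$ by Step~1, so $\partial M$ is invariant under the flow of $X$.
\begin{itemize}
\item In case (B), $\partial M$ is a union of vertical pieces $I\times\gamma$. Compactness of $M$ and $\partial M\neq\varnothing$ force the $t$-range of $\partial M$ to coincide with that of $M$, which contains $t_0$. Hence $\partial M$ meets $\{t=t_0\}=\Sigma$.
\item In case (A), the rotation orbits through any boundary point stay in $\partial M$ and cross the totally geodesic plane $\{V=0\}$, unless the point lies on the axis $\{V=f=0\}\subset\Sigma$. Either way $\partial M\cap\Sigma\neq\varnothing$.
\end{itemize}
Both conclusions contradict $\Sigma\cap\partial M=\varnothing$, so no such $f$ exists and $\dim\mathcal P=1$.

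\textbf{Main obstacle.} I expect the hardest part to be Step~2: extracting from the proofs of Theorems~\ref{T1} and~\ref{T3} a purely local description valid in a compact manifold with boundary, and continuing it to all of $M$. This needs care because $M\setminus\partial M$ is not complete and the leaves of the product structure may hit $\partial M$. To get around this, I would avoid a global isometry classification and use only the local form of $X$ (Killing rotation, resp.\ $\partial_t$), together with the fact that its flow preserves $\partial M$ and reaches $\Sigma$.
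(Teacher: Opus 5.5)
Your argument is correct in substance and takes a genuinely different route from the paper. Both proofs split according to whether the zero sets of $V$ and $f$ meet, but the paper then argues globally.
\begin{itemize}
\item In the intersecting case (Lemma~\ref{propprep}), it declares the universal cover to be a domain in $\mathbb{E}^3$ or $\mathbb{S}^3$ and invokes the classification of static domains in Lemma~\ref{lemomega}(i),(ii).
\item In the disjoint case, it uses niceness, Corollary~\ref{remff}, a claim that $R_g\geqslant 0$, a covering by a domain of $\mathbb{R}\times\mathbb{E}^2$ or $\mathbb{R}\times\mathbb{S}^2$, and Lemma~\ref{lemomega} again.
\end{itemize}
You instead observe that the Robin condition makes $X=V\nabla f-f\nabla V$ tangent to $\partial M$. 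It is worth stating explicitly that $X$ is Killing for any two static potentials, since $\nabla_iX_j=V_{;i}f_{;j}-f_{;i}V_{;j}$. The contradiction then comes from the dynamics of a complete flow preserving $\partial M$.

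This buys a lot. It avoids any classification of static domains and any claim that the universal cover embeds in a model space. It also needs no sign information on $R_g$; the paper justifies $R_g\geqslant0$ with Hopf's lemma at a free boundary, which is not available for closed components. Your argument also handles examples like $\mathbb{S}^1\times\mathbb{S}^2_+$ correctly. Here $\mathbb{S}^2_+$ is a closed hemisphere, $t\in\mathbb{R}/2\pi\mathbb{Z}$, the boundary is totally geodesic, and the potentials are $\sin t,\cos t$. Its cover $\mathbb{R}\times\mathbb{S}^2_+$ is a static domain with $\dim\mathcal P\geqslant 2$ that Lemma~\ref{lemomega}(iii) overlooks. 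The paper's route, in exchange, yields explicit model domains that it reuses in Proposition~\ref{propstat}.

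Two steps need more than you wrote.

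\textbf{Product structure in case (B).} Proposition~\ref{prop4} only gives niceness. The product structure in Theorem~\ref{T3} comes from Cheeger--Gromoll, i.e.\ from completeness, not from a local argument. You can get what you need directly.
\begin{itemize}
\item For $R_g\neq0$: Proposition~\ref{prop1}(iii) and Lemma~\ref{lem3} show that $e_3^\perp$ is integrable with totally geodesic leaves of curvature $R_g/2\neq0$, where $e_3$ is the null Ricci direction. Since $\Hess V=-\frac{R_g}{2}V\,e_3^\flat\otimes e_3^\flat$, and likewise for $f$, the leafwise parts of $\nabla V$ and $\nabla f$ are parallel on these leaves, hence zero. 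So $\nabla X=dV\wedge df=0$.
\item For $R_g=0$: Lemma~\ref{lem4} gives $\Hess_\Sigma f=0$ on the closed surface $\Sigma$, so $f|_\Sigma$ is locally constant. Hence the parallel fields $\nabla V,\nabla f$ are proportional, and $X=c\nabla V$ with $c$ a nonzero constant.
\end{itemize}

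\textbf{The ``$t$-range'' argument in Step 3(B).} It presupposes a global coordinate $t$, which need not exist. Replace it by an ODE along the complete integral curve $\gamma\subset\partial M$ of the parallel field $X$ through a boundary point. Then $y=V\circ\gamma$ satisfies $y''=\Hess V(X,X)=-\frac{R_g}{2}|X|^2y$ with $y(0)\neq0$.
\begin{itemize}
\item If $R_g>0$, $y$ has a zero.
\item If $R_g<0$, boundedness forces $y\equiv0$.
\item If $R_g=0$, $y'=c|\nabla V|^2$ is a nonzero constant, contradicting boundedness.
\end{itemize}
In case (A), the same device makes ``rotation orbits cross $\{V=0\}$'' rigorous without any global model. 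Let $\kappa$ be the sectional curvature. Then $A=|\nabla V|^2+\kappa V^2$, $B=\langle\nabla V,\nabla f\rangle+\kappa Vf$ and $C=|\nabla f|^2+\kappa f^2$ are constant on $M$. Along orbits, $(V,f)'=(BV-Af,\,CV-Bf)$. Evaluating at a point of $V^{-1}(0)\cap f^{-1}(0)$ gives $AC-B^2>0$. So $V$ oscillates and vanishes on every orbit in $\partial M$.
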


\subsection*{Conventions}
Throughout this paper, all manifolds are assumed to be smooth and connected. 
All metrics are smooth and complete Riemannian metrics.

\subsection*{Acknowledgments} The author is grateful to Lucas Ambrozio for suggesting this problem and for his careful guidance. Results of the project ``Symmetry. Information. Chaos", carried out within the framework of the Basic Research Program at HSE University in 2026, are presented in this work.

\section{Static manifolds without boundary}

In this section we extend the results of Proposition 2.1, Lemma 2.1-2.5, Proposition 2.2-2.3, and Theorem 2.1 from \cite{miao2015static} (see also~\cite{tod2000spatial}) to the general case where $R_g$ is not assumed to vanish. The necessary modifications to the original arguments are minor; in fact, the structure of all proofs remains essentially unchanged except at those points where the condition $R_g = 0$ was explicitly used. Some proofs for arbitrary $R_g$ were given in~\cite{wang2024static}. We present full proofs only for those parts where the vanishing of the scalar curvature played a role, and omit the rest as they carry over verbatim.

\begin{remark}
\item Very recently, the same technique was applied to the so-called $m$-quasi-Einstein manifolds. For more details see~\cite{gonccalves2025remarks}. 
\end{remark}

In what follows, let $(M,g)$ be a Riemannian manifold. We denote the components of the Ricci tensor by $R_{ij}$. Covariant differentiation is denoted by a semicolon; for instance, $R_{ij;k}$ and $Z_{;k}$ represent the covariant derivatives of the Ricci tensor and $Z$, respectively.

\begin{proposition}\label{prop1}
Let $\{e_1, e_2, e_3\}$ be an orthonormal frame diagonalizing the Ricci curvature at a point $p$.
\begin{enumerate}
    \item[(i)] If $f$ is a static potential, then
    $$
    f(R_{33;1} - R_{31;3}) = (R_{22} - R_{33})f_{;1},\quad
    f(R_{11;2} - R_{12;1}) = (R_{33} - R_{11})f_{;2},
    $$
    $$
    f(R_{22;3} - R_{23;2}) = (R_{11} - R_{22})f_{;3}.
    $$
    \item[(ii)] Assume that $\{R_{11}, R_{22}, R_{33}\}$ are distinct, and let $N, V$ be two non-vanishing static potentials in a domain. Then $V = cN$ for some constant $c$.
    \item[(iii)] Suppose $R_{11} = R_{22} \ne R_{33}$ and let $N$ be a positive static potential. If $f$ is another static potential, then $Z = N^{-1}f$ satisfies $Z_{;1} = Z_{;2} = 0$.
\end{enumerate}
\end{proposition}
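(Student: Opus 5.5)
The plan is to derive (i) from the integrability condition of the static equation, and then get (ii) and (iii) from (i) applied to two potentials at once.

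\emph{Part (i).} Write the static equation in the form
\[
f_{;ij} = f\,S_{ij}, \qquad S_{ij} = R_{ij} - \frac{R_g}{2}g_{ij},
\]
using that $R_g$ is constant and $n=3$. Differentiate once more and antisymmetrize in the last two indices:
\[
f_{;ijk} - f_{;ikj} = f_{;k}S_{ij} - f_{;j}S_{ik} + f\,(S_{ij;k} - S_{ik;j}).
\]
By the Ricci identity the left side equals $-R_{lijk}f_{;l}$, up to the sign convention. In dimension three the curvature tensor is determined by the Ricci tensor, so we substitute the expression of $R_{lijk}$ in terms of $R_{ij}$ and $R_g$. Since $R_g$ is constant, $S_{ij;k}-S_{ik;j} = R_{ij;k}-R_{ik;j}$.

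Now evaluate at $p$ in the Ricci-diagonal frame with $(i,j,k)=(3,3,1)$. The terms containing $g_{ij}$ and $R_g$ should cancel against the $-\frac{R_g}{2}$ correction in $S$, leaving
\[
f\,(R_{33;1} - R_{31;3}) = (R_{22}-R_{33})\,f_{;1}.
\]
Checking that the leftover coefficient is exactly $R_{22}-R_{33}$ is a routine bookkeeping step. The other two identities follow by cyclic permutation of $(1,2,3)$.

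\emph{Part (ii).} Apply (i) to both $N$ and $V$ and divide by the nonvanishing potentials. The left-hand sides $R_{33;1}-R_{31;3}$ etc. depend only on $g$, so
\[
(R_{22}-R_{33})\,\frac{N_{;1}}{N} = (R_{22}-R_{33})\,\frac{V_{;1}}{V},
\]
and similarly in the other two directions. Because the eigenvalues are distinct, every coefficient is nonzero, so $\nabla \log|N| = \nabla \log|V|$ at every point. This needs the Ricci-diagonal frame at each point, which exists pointwise and suffices since the identities are pointwise. Hence $\nabla(V/N)=0$, and on a connected domain $V=cN$.

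\emph{Part (iii).} The same argument works in the directions $e_1,e_2$. The first identity in (i) gives $\frac{N_{;1}}{N}=\frac{f_{;1}}{f}$ wherever $f\neq0$, using $R_{22}-R_{33}\neq0$; the second gives the same in direction $2$ from $R_{33}-R_{11}\neq0$. The third identity carries the coefficient $R_{11}-R_{22}=0$ and gives nothing, which is why $Z_{;3}$ is unconstrained.

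A cleaner route avoids dividing by $f$. Multiply the identity for $f$ by $N$ and the identity for $N$ by $f$, then subtract:
\[
(R_{22}-R_{33})\,(N f_{;1} - f N_{;1}) = 0.
\]
This holds everywhere, including on $f^{-1}(0)$, so $Z_{;1} = N^{-2}(Nf_{;1}-fN_{;1}) = 0$, and likewise $Z_{;2}=0$.

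The one subtlety is frame dependence: when $R_{11}=R_{22}$ the frame $\{e_1,e_2\}$ is not unique. The identities, however, hold for any orthonormal diagonalizing frame at $p$, so no difficulty arises. The main obstacle overall is the curvature bookkeeping in (i); (ii) and (iii) then follow formally.
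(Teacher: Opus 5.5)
Your proposal is correct and follows the standard argument the paper defers to (the cited appendix, going back to Miao--Tam): commute third covariant derivatives of $f_{;ij}=fS_{ij}$, write the three-dimensional curvature tensor in terms of Ricci, evaluate at $p$ in the Ricci-diagonal frame, and then cross-multiply the identities for two potentials to get (ii) and (iii). One small correction to your bookkeeping in (i): the $R_g$-terms do not cancel but add up to $R_g f_{;1}$, so the coefficient is $R_g-R_{11}-2R_{33}$, which becomes $R_{22}-R_{33}$ only after you substitute $R_g=R_{11}+R_{22}+R_{33}$.
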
 

The proof can be found in Appendix in \cite{wang2024static}. Notice that the tensor whose components in an orthonormal frame are the expressions in the brackets in the lefthand side of (i) is exactly the \textit{Cotton-York tensor} (up to a multiplicative constant). 

\begin{lemma}\label{lem1}
Let $f$ be a static potential whose zero set is nonempty, and set $\Sigma = f^{-1}(0)$.
\begin{enumerate}
    \item[(i)] The surface $\Sigma$ is totally geodesic, and $|\nabla^g f|$ is a positive constant on every connected component of $\Sigma$.
    \item[(ii)] For any $p \in \Sigma$, the gradient $\nabla^g f(p)$ is an eigenvector of the Ricci tensor $\mathrm{Ric}_g$ at $p$.
    \item[(iii)] Fix $p \in \Sigma$, and let $\{e_1, e_2, e_3\}$ be an orthonormal frame at $p$ that diagonalizes $\mathrm{Ric}_g$, with $e_3$ orthogonal to $\Sigma$. Then $R_{11} = R_{22}$ at $p$.
    \item[(iv)] Denote by $K$ the Gauss curvature of $\Sigma$ at $p$. With the same frame as in (iii), one has
    $$
    K = 2R_{11} - \dfrac{R_g}{2} = 2R_{22} - \dfrac{R_g}{2} = -R_{33} + \dfrac{R_g}{2}.
    $$
    Consequently, $K = \dfrac{R_g}{2}$ holds precisely when $R_{11} = R_{22} = \dfrac{R_g}{2}$ and $R_{33} = 0$ at $p$; and $K = \dfrac{R_g}{6}$ occurs if and only if $R_{11} = R_{22} = R_{33} = \dfrac{R_g}{3}$ at $p$.
\end{enumerate}
\end{lemma}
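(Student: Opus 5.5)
We take the static equation in the form $\Hess_g f=\bigl(\Ric_g-\frac{R_g}{2}g\bigr)f$ (for $n=3$) and evaluate it, together with its derivatives, along $\Sigma=f^{-1}(0)$.

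\emph{Step 1: $\Sigma$ is a smooth surface.} First we show $\nabla^g f\neq 0$ on $\Sigma$. Suppose $f(p)=0$ and $\nabla^g f(p)=0$. Along any unit-speed geodesic $\gamma$ from $p$, the function $h(t)=f(\gamma(t))$ satisfies the linear second-order ODE
$$h''=\Bigl(\Ric_g(\dot\gamma,\dot\gamma)-\tfrac{R_g}{2}\Bigr)h,\qquad h(0)=h'(0)=0.$$
Hence $h\equiv0$. So $f$ vanishes near $p$, and by connectedness $f\equiv 0$, a contradiction. Thus $\Sigma$ is a regular level set, hence a smooth embedded surface.

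\emph{Step 2: part (i).} On $\Sigma$ we have $\Hess_g f=0$. For $X,Y$ tangent to $\Sigma$, the second fundamental form with respect to the normal $\nu=\nabla^g f/|\nabla^g f|$ is $\Hess_g f(X,Y)/|\nabla^g f|=0$, so $\Sigma$ is totally geodesic. Moreover,
$$X|\nabla^g f|^2=2\Hess_g f(X,\nabla^g f)=0,$$
so $|\nabla^g f|$ is locally constant on $\Sigma$. It is positive by Step 1.

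\emph{Step 3: parts (ii) and (iii).} Differentiate the static equation covariantly and use the Ricci identity
$$f_{;ijk}-f_{;ikj}=R_{lijk}f_{;l}$$
(up to sign convention). Since $f=0$ on $\Sigma$, the equation $f_{;ij}=(R_{ij}-\frac{R_g}{2}g_{ij})f$ gives
$$f_{;ijk}=\bigl(R_{ij}-\tfrac{R_g}{2}g_{ij}\bigr)f_{;k}.$$
Antisymmetrizing in $j,k$ yields
$$\bigl(R_{ij}-\tfrac{R_g}{2}g_{ij}\bigr)f_{;k}-\bigl(R_{ik}-\tfrac{R_g}{2}g_{ik}\bigr)f_{;j}=R_{lijk}f_{;l}.$$
Take an orthonormal frame with $e_3=\nu$, so $f_{;1}=f_{;2}=0$ and $f_{;3}\neq0$. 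Setting $k=3$ and $i,j\in\{1,2\}$ gives
$$R_{ij}-\tfrac{R_g}{2}\delta_{ij}=R_{3ij3}\quad\text{(up to sign).}$$
Setting $i=3$, $j\in\{1,2\}$, $k=3$ gives $R_{3j}=R_{3j33}=0$, which is (ii). In dimension three the curvature tensor is determined by Ricci:
$$R_{ijkl}=(g\owedge(\Ric_g-\tfrac{R_g}{4}g))_{ijkl}.$$
Substituting this into the tangential relation expresses $R_{ij}$, for $i,j\in\{1,2\}$, in terms of $R_{ij}$, $R_{33}\delta_{ij}$ and $R_g\delta_{ij}$. Comparing the $(1,1)$ and $(2,2)$ components shows that $R_{11}-R_{22}$ equals a multiple of itself with coefficient different from $1$. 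Hence $R_{11}=R_{22}$, which is (iii). The same relation should also give $R_{11}=R_{22}=-R_{33}+\frac{R_g}{2}$, after tracking the sign convention.

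\emph{Step 4: part (iv).} Since $\Sigma$ is totally geodesic, the Gauss equation gives $K=R_{1212}$, and in dimension three
$$R_{1212}=R_{11}+R_{22}-\tfrac{R_g}{2}.$$
This equals $2R_{11}-\frac{R_g}{2}$ by (iii). Writing $R_g=2R_{11}+R_{33}$ converts it to $-R_{33}+\frac{R_g}{2}$. The two special cases then follow by solving these linear relations: $K=\frac{R_g}{2}$ forces $R_{33}=0$ and $R_{11}=\frac{R_g}{2}$, while $K=\frac{R_g}{6}$ forces all $R_{ii}=\frac{R_g}{3}$.

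\emph{Main obstacle.} The main difficulty is the sign and index bookkeeping in Step 3. The clean relation to aim for is that the $(1,1)$, $(2,2)$ and $(1,2)$ components of the tangential equation together force $R_{11}=R_{22}$ and $R_{12}=0$. I would verify this against the round sphere and $\mathbb{R}\times\mathbb{S}^2$ as sanity checks.
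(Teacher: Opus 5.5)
Your proof is correct and follows essentially the paper's route. The paper writes out only (iv), from the contracted Gauss equation for the totally geodesic $\Sigma$, the trace $R_g=R_{11}+R_{22}+R_{33}$ and (iii), exactly as in your Step 4. It imports (i)--(iii) from Miao--Tam without proof, and your Steps 1--3 supply the standard argument for those parts.

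Two points in Step 3 need fixing.

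\textbf{The sign you leave open is not cosmetic.} With the opposite sign, the $(1,1)$ minus $(2,2)$ comparison reads $R_{11}-R_{22}=R_{11}-R_{22}$ and yields nothing. Fix the convention so that $\langle R(X,Y)Y,X\rangle$ is the sectional curvature. Then on $\Sigma$, for $i,j\in\{1,2\}$,
$$\langle R(e_3,e_i)\nabla^g f,e_j\rangle=|\nabla^g f|\,\bigl(R_{ij}-\tfrac{R_g}{2}\delta_{ij}\bigr).$$
Inserting the three-dimensional expression of the curvature tensor in terms of the Ricci tensor gives
\[
R_{ij}-\tfrac{R_g}{2}\delta_{ij}=-\bigl(R_{ij}+R_{33}\delta_{ij}-\tfrac{R_g}{2}\delta_{ij}\bigr),\qquad\text{i.e.}\qquad R_{ij}=\tfrac{R_g-R_{33}}{2}\,\delta_{ij}.
\]
So the coefficient is $-1$, and you get $R_{11}=R_{22}$ and $R_{12}=0$. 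On the unit $\mathbb{S}^3$ both sides equal $-1$, matching your proposed sanity check.

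\textbf{One of your remarks is false.} You say this relation also gives $R_{11}=R_{22}=-R_{33}+\tfrac{R_g}{2}$. On the unit $\mathbb{S}^3$, $R_{11}=2$ while $-R_{33}+\tfrac{R_g}{2}=1$. The quantity $-R_{33}+\tfrac{R_g}{2}$ is $K$, not $R_{11}$; the relation only says $R_{11}=R_{22}=\tfrac{R_g-R_{33}}{2}$. Step 4 derives $K=-R_{33}+\tfrac{R_g}{2}$ independently, so the slip does not affect the proof, but the sentence should be deleted.
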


\begin{proof}
$(iv)$ The identities follow immediately from the contracted Gauss equation, the decomposition $R_g = R_{11} + R_{22} + R_{33}$, and the equality $R_{11} = R_{22}$ established in $(iii)$.
\end{proof}

\begin{lemma}\label{lem2}
If the Ricci curvature tensor of $g$ possesses three distinct eigenvalues at a point of $M$, then $\dim \mathcal{P}\leqslant 1$. The space $\mathcal{P}$ is determined by the Ricci and Cotton-York tensors.
\end{lemma}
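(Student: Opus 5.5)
The argument has three steps: find a neighbourhood of the given point where the Ricci eigenvalues stay distinct and some static potential does not vanish, apply Proposition~\ref{prop1}(ii) there, and extend the resulting linear relation to all of $M$ by unique continuation.

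\emph{Step 1: a good open set.} Let $p$ be the point where $\mathrm{Ric}_g$ has three distinct eigenvalues. Eigenvalues depend continuously on the metric data, so they remain distinct on an open neighbourhood $U$ of $p$. Suppose $f_1, f_2 \in \mathcal{P}$ with $f_1 \not\equiv 0$; we will show $f_2 = c f_1$.

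A nontrivial static potential cannot vanish on an open set. Indeed, \eqref{static} rewritten as $\Hess_g f = (\mathrm{Ric}_g - \frac{R_g}{n-1}g) f$ is a linear second-order ODE along every geodesic. Hence $f$ is determined by the pair $(f(q), \nabla f(q))$ at any single point $q$. It follows that $f_1$ is nonzero somewhere in $U$, and therefore on a smaller connected open set $W \subset U$.

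To also make $f_2$ nonvanishing, replace it by $f_2 + t f_1$ for a suitable constant $t$. On a compact ball inside $W$ we have $|f_1| \ge \delta > 0$ while $f_2$ is bounded, so choosing $t$ large makes $f_2 + t f_1$ nonvanishing on that ball.

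\emph{Step 2: the local relation.} Proposition~\ref{prop1}(ii) applies on the ball to $N = f_1$ and $V = f_2 + t f_1$. It gives $f_2 + t f_1 = c' f_1$ there, so $f_2 = c f_1$ on an open set.

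\emph{Step 3: globalizing.} The function $f_2 - c f_1$ is a static potential vanishing on an open set. Take any point $q$ of that set: the function and its gradient vanish at $q$. By the ODE uniqueness along geodesics, combined with connectedness and completeness of $M$, it vanishes identically. Hence $\dim\mathcal{P} \le 1$.

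\emph{Determination by the Ricci and Cotton-York tensors.} This follows from Proposition~\ref{prop1}(i) at points where the eigenvalues are distinct and $f \ne 0$. There the three identities read $f_{;i}/f = C_i/(R_{jj}-R_{kk})$, where $C_i$ are the Cotton-York components. So $\nabla \log|f|$ is expressed explicitly by curvature quantities. Integrating this determines $f$ up to a multiplicative constant on the good set, and the unique continuation of Step 3 determines it on all of $M$.

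\emph{Main obstacle.} The delicate point is justifying that Proposition~\ref{prop1}(ii) needs only a small domain, and that a local linear dependence propagates globally. Both rest on unique continuation for static potentials. I would prove this via the geodesic ODE argument above, checking carefully that vanishing of $(f, \nabla f)$ at one point forces $f \equiv 0$ on the connected manifold $M$.
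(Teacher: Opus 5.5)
Your argument is correct, and it is essentially the standard argument the paper relies on. The paper omits the proof: it defers to Miao--Tam for $\dim\mathcal P\leqslant 1$ and to Tod's Corollary~3 for the determination statement. That standard argument is the one you give: get local proportionality from Proposition~\ref{prop1}(ii) on a neighbourhood where the Ricci eigenvalues stay distinct and the potentials do not vanish, then propagate it by unique continuation via the geodesic ODE. Your replacement of $f_2$ by $f_2+tf_1$ is just a convenient substitute for picking a point off both zero sets, which is what the paper does in Proposition~\ref{prop2}.
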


The last statement in the previous lemma follows from Corollary 3 in~\cite{tod2000spatial}.

\begin{lemma}\label{lem3}
Let $f$ and $N$ be static potentials, and assume that $N$ does not vanish in a domain $U$. Define $Z = f/N$. Then either $Z$ is constant in $U$, or $\nabla^g Z$ is nowhere zero in $U$. In the latter case:
\begin{enumerate}
    \item[(i)] Every level set of $Z$ contained in $U$ is a totally geodesic surface.
    \item[(ii)] The quantity $N^2 |\nabla^g Z|^2$ is constant on each connected component of any level set of $Z$ in $U$.
    \item[(iii)] The manifold $(M,g)$ is locally isometric in $U$ to a warped product $((-\varepsilon,\varepsilon) \times \Sigma,\, N^2 dt^2 + g_0)$, where $\Sigma$ is a two-dimensional surface, $g_0$ is a fixed metric on $\Sigma$, and $Z$ is constant on each slice $\Sigma_t = \{t\} \times \Sigma$.
\end{enumerate}
\end{lemma}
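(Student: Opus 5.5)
The plan is to compute the Hessian of $Z=f/N$ from the static equations for $f$ and $N$, and then read off (i)--(iii) from the resulting identity.

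\emph{The Hessian identity.} Since $f = ZN$ and both are static potentials,
$$
\Hess f = Z\,\Hess N + dZ\otimes dN + dN\otimes dZ + N\,\Hess Z .
$$
Substituting $\Hess f = (\mathrm{Ric}_g - \tfrac{R_g}{2}g)f$ and $\Hess N = (\mathrm{Ric}_g - \tfrac{R_g}{2}g)N$, the curvature terms cancel because $f = ZN$. This leaves
$$
N\,\Hess Z = -(dZ\otimes dN + dN\otimes dZ) \quad \text{in } U .
$$
Equivalently, $\Hess Z(X,\cdot) = -N^{-1}\big(dZ(X)\,dN + dN(X)\,dZ\big)$.

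\emph{Dichotomy.} Along any curve, the pair $(\nabla Z,\Hess Z)$ therefore satisfies a linear first-order ODE. We have $\nabla_X \nabla Z = -N^{-1}\big(\langle \nabla Z,X\rangle\nabla N + \langle\nabla N,X\rangle \nabla Z\big)$, which is linear in $\nabla Z$ with smooth coefficients on $U$, since $N\neq0$. So if $\nabla Z(p)=0$ at some $p\in U$, uniqueness for parallel-transport-type linear ODEs along curves from $p$ gives $\nabla Z\equiv 0$ on the connected domain $U$, and $Z$ is constant. Otherwise $\nabla Z$ never vanishes.

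\emph{(i) Level sets are totally geodesic.} For $X,Y$ tangent to a level set we have $dZ(X)=dZ(Y)=0$, so $\Hess Z(X,Y)=0$. The second fundamental form of the level set is $\Hess Z(X,Y)/|\nabla Z|$, hence it vanishes.

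\emph{(ii) $N^2|\nabla Z|^2$ is constant on level sets.} For $X$ tangent to a level set,
$$
X(N^2|\nabla Z|^2) = 2N\,dN(X)\,|\nabla Z|^2 + 2N^2\,\Hess Z(X,\nabla Z).
$$
By the identity, $\Hess Z(X,\nabla Z) = -N^{-1}\big(0 + dN(X)|\nabla Z|^2\big)$. The two terms cancel, so the derivative along the level set is zero.

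\emph{(iii) Local warped-product form.} Take $\Sigma$ a level set and flow along $\nabla Z/|\nabla Z|^2$, so that $Z$ itself serves as a coordinate $s$ and the metric is $|\nabla Z|^{-2}ds^2 + g_s$. Since the level sets are totally geodesic, $\partial_s g_s = 2|\nabla Z|^{-1}B = 0$ (the flow is normal), so $g_s = g_0$ is fixed. By (ii), $|\nabla Z|^{-1} = N\,\phi(s)$ with $\phi(s)=\big(N|\nabla Z|\big)^{-1}$ depending only on $s$. Reparametrizing $t=\int\phi\,ds$ gives the metric $N^2dt^2+g_0$, with $Z$ constant on each slice $\Sigma_t$. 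This requires restricting to a small neighbourhood where the flow is defined, which gives the $(-\varepsilon,\varepsilon)$.

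The main obstacle is (iii). The delicate points are checking that the normal flow preserves the induced metric, and handling the reparametrization that absorbs the level-wise constant from (ii). Both steps follow once the key Hessian identity is in hand.
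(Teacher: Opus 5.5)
Your proposal is correct and follows the paper's route: the paper's proof consists only of deriving the identity $N\,\Hess Z = -(dZ\otimes dN + dN\otimes dZ)$ from the static equations, deferring the dichotomy and (i)--(iii) to Miao--Tam. Your linear-ODE uniqueness argument, second-fundamental-form computation, and normal-flow reparametrization correctly fill in those deferred steps.
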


\begin{proof}
It suffices to establish formula (2.6) from \cite{miao2015static}.

Let $\{x^i\}$ be local coordinates on $M$. Since both $N$ and $f = NZ$ satisfy the static equation~\eqref{static}, we have
$$
-\frac{NZ}{2} g_{ij} + NZ R_{ij} = (NZ)_{;ij}
= -\frac{NZ}{2} g_{ij} + NZ R_{ij} + N Z_{;ij} + N_{;i} Z_{;j} + N_{;j} Z_{;i}.
$$
Canceling common terms yields
$$
N Z_{;ij} = -N_{;i} Z_{;j} - N_{;j} Z_{;i},
$$
which is equivalently expressed as
$$
N \, \mathrm{Hess}_g Z(X, Y) = -\langle \nabla^g N, X \rangle \langle \nabla^g Z, Y \rangle - \langle \nabla^g N, Y \rangle \langle \nabla^g Z, X \rangle
$$
for all tangent vector fields $X, Y$.
\end{proof}

\begin{proposition}\label{prop2}
If $(M,g)$ is not Einstein at some point, then $\dim \mathcal{P}\leqslant 2$.
\end{proposition}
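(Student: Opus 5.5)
Assume $(M,g)$ is not Einstein at some point $p$. Since not being Einstein is an open condition, it holds on a neighbourhood $U$ of $p$. If $\mathrm{Ric}_g$ has three distinct eigenvalues somewhere in $U$, Lemma~\ref{lem2} already gives $\dim\mathcal P\leqslant 1$. So we may assume that throughout $U$ the eigenvalues have the pattern $R_{11}=R_{22}\neq R_{33}$. Shrinking $U$, we may also choose a smooth local orthonormal frame adapted to this splitting.

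The plan is to argue by contradiction. Suppose $f_1,f_2,f_3$ are linearly independent static potentials. Static potentials solve a second-order overdetermined system, so a potential is determined by its value and gradient at a point. Hence a nontrivial potential cannot vanish on an open set, and its zero set is a (totally geodesic) surface by Lemma~\ref{lem1}. Shrinking $U$ further, we choose $N=f_1$ with $N>0$ on $U$ (replacing $f_1$ by $-f_1$ if needed).

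By Proposition~\ref{prop1}(iii), both $Z_2=f_2/N$ and $Z_3=f_3/N$ satisfy $Z_{;1}=Z_{;2}=0$. Their gradients are therefore parallel to $e_3$ throughout $U$. Linear independence of the $f_i$ means $Z_2,Z_3$ are nonconstant, so by Lemma~\ref{lem3} neither gradient vanishes in $U$, and the two functions share level sets locally. Consequently $Z_3=\varphi(Z_2)$ locally for a smooth function $\varphi$ of one variable.

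Now use the identity from the proof of Lemma~\ref{lem3},
\[
N Z_{;ij}=-N_{;i}Z_{;j}-N_{;j}Z_{;i},
\]
which holds for both $Z_2$ and $Z_3$. The right-hand side is linear in $Z$, so the identity is preserved under linear combinations, and a direct computation gives
\[
N\,\varphi''(Z_2)\,Z_{2;i}Z_{2;j}=0 .
\]
Since $\nabla Z_2\neq0$ and $N>0$, this forces $\varphi''=0$. Thus $Z_3=aZ_2+b$, that is, $f_3=af_2+bf_1$ on $U$. The difference $f_3-af_2-bf_1$ is then a static potential vanishing on an open set, hence identically zero on $M$ by unique continuation. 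This contradicts independence, so $\dim\mathcal P\leqslant 2$.

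The main obstacle is justifying the rigidity used in the last two steps: that a static potential vanishing on an open set vanishes identically, and that $Z_3$ is a function of $Z_2$. For the first, I would use that $(f,\nabla f)$ satisfies a linear first-order ODE system along curves, since $\Hess f$ is determined by $f$. This forces the pair to vanish along every path from the open set, and $M$ is connected. For the second, I would argue that $\nabla Z_2$ and $\nabla Z_3$ are both nonvanishing multiples of $e_3$, so $dZ_3\wedge dZ_2=0$. Hence $Z_3$ is constant on the connected level sets of $Z_2$, after shrinking $U$ to a product chart as in Lemma~\ref{lem3}(iii).
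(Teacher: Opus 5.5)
Your proof is correct. Its skeleton matches the paper's: restrict to a region where the Ricci eigenvalues have the pattern $\lambda_1=\lambda_2\neq\lambda_3$, use Proposition~\ref{prop1}(iii) to see that the gradients of the ratios $f_i/N$ are all colinear with $e_3$, and then use the rigidity coming from $NZ_{;ij}=-N_{;i}Z_{;j}-N_{;j}Z_{;i}$ to force a linear relation.

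The difference is in the closing step. The paper fixes one point $q$ and chooses $\alpha$ with $\nabla Z_1+\alpha\nabla Z_2=0$ at $q$, which colinearity makes possible. It then applies the dichotomy of Lemma~\ref{lem3} to the static potential $f_1+\alpha f_2$: its ratio with $f_3$ has a critical point, hence is constant. You instead build the functional relation $Z_3=\varphi(Z_2)$ from $dZ_3\wedge dZ_2=0$, and show by the chain rule that $\varphi''=0$, because the identity is linear in $Z$.

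The paper's route is shorter. It needs colinearity at only one point, and no product chart or construction of $\varphi$. In fact, you already invoke Lemma~\ref{lem3} to get $\nabla Z_2\neq 0$, so the same dichotomy applied to $Z_3-\alpha Z_2$ would have finished your argument at once.

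Your route shows clearly why only affine relations between ratios can survive. It is also somewhat more careful in the bookkeeping:
\begin{itemize}
\item You dispose of the three-distinct-eigenvalue case up front via Lemma~\ref{lem2}, so you need only one nonvanishing potential. The paper needs all three nonvanishing on $\Omega$.
\item You state explicitly the unique continuation step that turns a relation on an open set into a contradiction with global linear independence. The paper leaves this implicit.
\end{itemize}
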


\begin{proof}
Assume, for contradiction, that $\dim \mathcal{P}> 2$, and choose three linearly independent static potentials $f_1, f_2, f_3$.
Let $U \subset M$ be an open set where $g$ is not Einstein at any point.
By Lemma~\ref{lem1}, the intersection of the complement of the union of the zero sets $\bigcup_{i=1}^3 f_i^{-1}(0)$ and $U$ is not empty.
Thus, there exists a connected open subset $\Omega \subset U$ on which each $f_i$ is nowhere zero.

Denote by $\{\lambda_1, \lambda_2, \lambda_3\}$ the eigenvalues of $\mathrm{Ric}_g$ on $\Omega$.
They cannot all be distinct: otherwise, Proposition~\ref{prop1}(ii) would force any two nonvanishing static potentials on $\Omega$ to be scalar multiples of one another, contradicting the linear independence of $f_1, f_2, f_3$.
At the same time, since $g$ is not Einstein on $\Omega$, the eigenvalues cannot all coincide.

Hence, after possibly reordering indices, we have $\lambda_1 = \lambda_2 \ne \lambda_3$ throughout $\Omega$.
Set $Z_1 = f_1 / f_3$ and $Z_2 = f_2 / f_3$.
By Proposition~\ref{prop1}(iii), the gradients $\nabla^g Z_1$ and $\nabla^g Z_2$ are both everywhere colinear with the eigenvector of $\mathrm{Ric}_g$ associated with $\lambda_3$.
Therefore, at each $q \in \Omega$, the vectors $\nabla^g Z_1(q)$ and $\nabla^g Z_2(q)$ are linearly dependent; i.e., there exists a scalar $\alpha$ such that
$$
\nabla^g Z_1 + \alpha \nabla^g Z_2 = 0 \quad \text{at } q.
$$
Further, the function $f_1+\alpha f_2$ is also a static potential and the gradient of $(f_1+\alpha f_2)/f_3$ is exactly $\nabla^g Z_1 + \alpha \nabla^g Z_2$, which vanishes at $q\in \Omega$. By Lemma~\ref{lem3}, $Z_1 + \alpha Z_2=\beta=const$ in $\Omega$. Consequently, $f_1 + \alpha f_2 = \beta f_3$ for some constant $\beta$, which contradicts the assumed linear independence of $f_1, f_2, f_3$.

Thus, $\dim \mathcal{P}\leqslant 2$ whenever $(M,g)$ is not Einstein at some point.
\end{proof}

\begin{lemma}\label{lem4}
Let $f$ and $\tilde{f}$ be static potentials, and assume that the zero set of $\tilde{f}$ is nonempty. Set $\Sigma = \tilde{f}^{-1}(0)$. Then, along $\Sigma$,
$$
\mathrm{Hess}_\Sigma f = \frac{1}{2} \left(K - \frac{R_g}{2}\right) f \, g_\Sigma,
$$
where $\mathrm{Hess}_\Sigma$ denotes the Hessian with respect to the induced metric $g_\Sigma$ on $\Sigma$, and $K$ is the Gauss curvature of $(\Sigma, g_\Sigma)$. In particular, the quantity $\left(K - \dfrac{R_g}{6}\right) f^3$ is constant on each connected component of $\Sigma$.
\end{lemma}

For the proof we refer the reader to Appendix in~\cite{wang2024static}.

At this point, we also observe that the equation in Lemma~\ref{lem4} coincides with equation (40) in \cite{ambrozio2017static}, discussed in Appendix~B.

\begin{lemma}\label{lem5}
Suppose $(\Sigma_0, g_0)$ is a surface of constant Gauss curvature $K=\dfrac{R_g}{2}\neq 0$. If $\dim \mathcal{P}\geqslant 2$ on
$$
(M,g) = ((-\varepsilon,\varepsilon) \times \Sigma,\, N^2 dt^2 + g_0),
$$
where $N > 0$. If $(M,g)$ is nice along $\Sigma_0$, then it is nice for all $\varepsilon>0$ sufficiently small. 
\end{lemma}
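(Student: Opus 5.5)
This lemma is the analogue of Lemma 2.4 in \cite{miao2015static}, with the value $K=R_g/2$ taking the place of $K=0$. We work on the warped product $g=N^2dt^2+g_0$ over $(-\varepsilon,\varepsilon)\times\Sigma$, with $N>0$ and $\Sigma_0=\{0\}\times\Sigma$. The slices $\Sigma_t$ are totally geodesic: this is automatic for this metric form and consistent with Lemma~\ref{lem3}. Hence the Gauss equation applied to each slice gives
$$K_{g_0}=R_{1212},\qquad R_{33}=\tfrac{R_g}{2}-K_{g_0}.$$
Since $g_0$ does not depend on $t$ and has constant curvature $R_g/2$, this yields $R_{33}=0$ on every slice.

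The remaining task is to show $R_{11}=R_{22}=\lambda$ with $\lambda$ constant. The standard warped-product formulas give
$$\Ric(X,Y)=K_{g_0}g_0(X,Y)-N^{-1}\Hess_{g_0}N(X,Y)\qquad\text{for } X,Y \text{ tangent to } \Sigma,$$
$$R_{33}=-N^{-1}\Delta_{g_0}N.$$
Since $R_{33}=0$, the function $N(t,\cdot)$ is $g_0$-harmonic on each slice, and niceness reduces to showing that $\Hess_{g_0}N(t,\cdot)$ vanishes for every $t$. Then $R_{11}=R_{22}=R_g/2=\lambda$.

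To obtain this, use $\dim\mathcal P\geqslant2$. Let $f$ be a static potential with $f/N$ nonconstant; such an $f$ exists because $N$ itself is static there, by the warped structure coming from Lemma~\ref{lem3}. By Lemma~\ref{lem3}, $Z=f/N$ depends only on $t$. Write $f=Z(t)N$. The mixed components $Z_{;ti}$ of the equation $NZ_{;ij}=-N_{;i}Z_{;j}-N_{;j}Z_{;i}$ involve $\partial_t$ and $\Sigma$-derivatives of $N$. Combined with the tangential components of the static equation, these constrain $\partial_tN$ and $\Hess_{g_0}N$.

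The tangential components of $\Hess_gN=(\Ric-\tfrac{R_g}{2}g)N$ give
$$\Hess_{g_0}N+N^{-1}\langle\nabla N,\cdot\rangle\text{-terms}=\Bigl(K_{g_0}-\tfrac{R_g}{2}\Bigr)N g_0-\Hess_{g_0}N,$$
that is, $2\Hess_{g_0}N=(\text{terms involving }\partial_tN\text{ and }\Gamma^t_{ij})$. In the warped product, $\Gamma^t_{ij}=0$ for $i,j$ tangent, since $g_0$ is $t$-independent. Hence on each slice $\Hess_{g_0}N$ is determined algebraically by the static equation, and with $K_{g_0}=R_g/2$ this forces $\Hess_{g_0}N=0$. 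This is also consistent with Lemma~\ref{lem4}, which gives $\Hess_\Sigma f=0$ on a zero set.

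The main obstacle is that this argument only works pointwise if the static equation itself yields $\Hess_{g_0}N=0$. If the terms do not cancel cleanly, I would fall back on a continuation argument in $t$. Consider the quantity $Q(t)=\|\Hess_{g_0}N(t,\cdot)\|^2$ together with its $t$-derivatives. Differentiate the $(t,i)$ and $(t,t)$ components of the static equation, and use the contracted second Bianchi identity (constant $R_g$) together with Proposition~\ref{prop1}(i), the Cotton--York relations with $R_{11}=R_{22}$. This should give a linear first-order ODE system in $t$ for $(\Hess_{g_0}N,\,R_{33})$ with zero initial data on $\Sigma_0$, because $(M,g)$ is nice along $\Sigma_0$. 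Uniqueness for that ODE then gives niceness for small $|t|$. Ensuring the system closes is the key step. It will use $K\neq0$ when dividing by $K$, as in the analogous relation $K=2R_{11}-R_g/2$ from Lemma~\ref{lem1}(iv).
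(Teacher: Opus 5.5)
Your argument hinges on the claim that $N$ itself is a static potential, so that you can evaluate the static equation on $N$. This is not a hypothesis of the lemma, and nothing in the statement yields it. The form $N^2dt^2+g_0$ fixes $N$ only up to a positive factor depending on $t$, and $\dim\mathcal P\geqslant 2$ gives no link between the static potentials and $N$. For instance, $\mathbb R\times\mathbb S^2$ with the product metric has this form with $N\equiv 1$, $K=1=R_g/2$ and $\dim\mathcal P=2$. Yet $N\equiv1$ is not static, since $\Ric-\tfrac{R_g}{2}g=-dt^2\neq 0=\Hess 1$. What you would really need is a static potential of the form $\psi(t)N$, and the hypotheses do not supply one a priori.

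Appealing to ``the warped structure coming from Lemma~\ref{lem3}'' imports the context of Proposition~\ref{prop4}, where $N=f_2$; it does not prove the lemma as stated. The detour through a second potential $f$ does not help. For an arbitrary static $f$, Lemma~\ref{lem3} foliates by level sets of $f/N$, which need not be the given slices $\Sigma_t$, and you never use $f$ afterwards. The fallback ODE scheme is only a sketch whose closure you leave open. Note also that your main line never genuinely uses $R_g\neq 0$, niceness along $\Sigma_0$, or $\dim\mathcal P\geqslant 2$. That is a sign you are proving a different statement.

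If you add the assumption that $N$ is static (which holds in the application in Proposition~\ref{prop4}), your computation closes cleanly and the hedging is unnecessary. Every slice is totally geodesic, so $\Hess_g N|_{T\Sigma_t}=\Hess_{g_0}N$; the only extra term, $\Gamma^t_{ij}\partial_t N$, vanishes. Combining the tangential static equation with $\Ric|_{T\Sigma_t}=Kg_0-N^{-1}\Hess_{g_0}N$ gives $2\Hess_{g_0}N=(K-\tfrac{R_g}{2})Ng_0=0$. Together with $R_{33}=0$ and $\Ric(X,\nu)=0$ from the contracted Codazzi equation, this is niceness.

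The paper needs no assumption on $N$. After obtaining $R_{33}=0$ and $R_{11}+R_{22}=R_g$ exactly as you do, it applies Lemma~\ref{lem2}: three distinct Ricci eigenvalues would force $\dim\mathcal P\leqslant 1$. Hence at each point the eigenvalues are either $(\tfrac{R_g}{2},\tfrac{R_g}{2},0)$ or $(0,R_g,0)$. It then uses niceness along $\Sigma_0$, $R_g\neq 0$ and continuity of the eigenvalues to exclude the second alternative. This pointwise use of $\dim\mathcal P\geqslant 2$ is the idea your proposal is missing.
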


\begin{proof}
Fix a point $(t, q) \in (-\varepsilon,\varepsilon) \times \Sigma$, and let $\Sigma_t = \{t\} \times \Sigma$ denote the corresponding slice. Observe that $\Sigma_t$ is totally geodesic and has constant Gauss curvature.

Choose an orthonormal frame $\{e_1, e_2, e_3\}$ at $(t,q)$ that diagonalizes $\mathrm{Ric}_g$, with $e_1, e_2 \in T\Sigma_t$ and $e_3$ normal to $\Sigma_t$. The contracted Gauss equation applied to $\Sigma_t$ gives $R_{33} = 0$. Since $R_g = R_{11} + R_{22} + R_{33}$, it follows that
$$
R_{11} + R_{22} = R_g.
$$

Assume, for contradiction, that $R_{11} \ne R_{22}$ and that neither vanishes. Then the Ricci tensor has three distinct eigenvalues at $(t,q)$, and Lemma~\ref{lem2} would imply $\dim \mathcal{P}\leqslant 1$, which contradicts the standing assumption $\dim \mathcal{P}\geqslant 2$. Hence, either $R_{11} = R_{22}$ or one of them is zero. Combining this with $R_{11} + R_{22} = R_g$, the only possibilities (up to relabelling) are:
$$
R_{11} = R_{22} = \frac{R_g}{2}, \quad \text{or} \quad R_{11} = 0,\; R_{22} = R_g.
$$
Since $R_{11}=R_{22}=\dfrac{R_g}{2}\neq 0$ along $\Sigma_0$, by assumption, the latter case is excluded for all $\varepsilon>0$ sufficiently small (by continuity of Ricci eigenvalues). Therefore, we must have $R_{11} = R_{22} = \dfrac{R_g}{2}$ and $R_{33} = 0$ at every point $(t,q)$, i.e., it is nice.
\end{proof}

\begin{lemma}\label{lem6}
Suppose $(\Sigma_0, g_0)$ is a surface of constant Gauss curvature $K=\dfrac{R_g}{6}$. If $\dim \mathcal{P}\geqslant 2$ on
\[
(M,g) = ((-\varepsilon,\varepsilon) \times \Sigma,\, N^2 dt^2 + g_0),
\]
where $N > 0$, then $(M,g)$ is a space form.
\end{lemma}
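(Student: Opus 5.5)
The plan mirrors Lemma~\ref{lem5}, with the Einstein condition $R_{11}=R_{22}=R_{33}=\frac{R_g}{3}$ playing the role of niceness. The key point is that in the warped product
\[
(M,g)=((-\varepsilon,\varepsilon)\times\Sigma,\, N^2dt^2+g_0)
\]
every slice $\Sigma_t$ is isometric to $(\Sigma_0,g_0)$. Hence each $\Sigma_t$ has the same constant Gauss curvature $K=\frac{R_g}{6}$. By Lemma~\ref{lem3}, each slice is also totally geodesic, being a level set of $Z$.

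\textbf{Step 1: the Ricci tensor on each slice.} For a totally geodesic slice, the contracted Gauss equation gives
\[
K=\frac{R_g}{2}-R_{33},
\]
where $e_3$ is the unit normal $N^{-1}\partial_t$. With $K=\frac{R_g}{6}$ this yields $R_{33}=\frac{R_g}{3}$ at every point, so $R_{11}+R_{22}=\frac{2R_g}{3}$ for the tangential block.

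I must also check that $e_3$ is a Ricci eigenvector, i.e.\ that the mixed terms $R_{3a}$ ($a=1,2$) vanish. The Codazzi equation for a totally geodesic hypersurface gives $R(X,Y,Z,\nu)=0$ for $X,Y,Z$ tangent. Tracing over tangent directions shows $\mathrm{Ric}(X,\nu)=0$ for all $X$ tangent to the slice. So $\mathrm{Ric}$ preserves $T\Sigma_t$ and its orthogonal complement.

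\textbf{Step 2: the tangential eigenvalues agree.} Diagonalize the tangential block as $R_{11},R_{22}$ with $R_{11}+R_{22}=\frac{2R_g}{3}$.
\begin{itemize}
\item If all three eigenvalues were distinct somewhere, Lemma~\ref{lem2} would give $\dim\mathcal P\leqslant1$, a contradiction. So at each point two eigenvalues coincide.
\item If $R_{11}=R_{22}$, both equal $\frac{R_g}{3}=R_{33}$, and the point is Einstein.
\item Otherwise one tangential eigenvalue equals $R_{33}=\frac{R_g}{3}$, and then the other equals $\frac{R_g}{3}$ as well. This contradicts $R_{11}\neq R_{22}$.
\end{itemize}
So every point is Einstein with $\mathrm{Ric}=\frac{R_g}{3}g$. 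Unlike in Lemma~\ref{lem5}, no continuity argument is needed here.

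\textbf{Step 3: constant curvature.} In dimension three an Einstein metric has constant sectional curvature, since the Weyl tensor vanishes identically and the curvature is determined by $\mathrm{Ric}$. So the warped product has constant sectional curvature $\frac{R_g}{6}$, i.e.\ it is (locally) a space form. The global statement, once this chart is propagated, follows by analyticity of static metrics where $N\neq0$, or by the standard argument used elsewhere in the paper.

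\textbf{Main obstacle.} The step needing the most care is justifying that the normal $e_3$ diagonalizes $\mathrm{Ric}$ (Step 1) for every slice $\Sigma_t$, not only for $\Sigma_0$. I expect this to follow directly from Codazzi because every slice is totally geodesic. If the convention "$g_0$ fixed" is read loosely, I would instead invoke Lemma~\ref{lem1}(ii)--(iv) applied to the static potential $f-cN$ whose zero set is $\Sigma_t$, where $c$ is the value of $Z$ on $\Sigma_t$. That gives the same conclusion slice by slice.
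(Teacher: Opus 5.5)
Your proof is correct and follows the paper's argument essentially verbatim. The contracted Gauss equation on the totally geodesic slices $\Sigma_t$ gives $R_{33}=\frac{R_g}{3}$, hence $R_{11}+R_{22}=\frac{2R_g}{3}$, and Lemma~\ref{lem2} combined with this trace identity forces $R_{11}=R_{22}=R_{33}=\frac{R_g}{3}$ at every point. Your Codazzi check that the unit normal $N^{-1}\partial_t$ is a Ricci eigenvector, and your remark that a three-dimensional Einstein metric has constant sectional curvature, make explicit two steps the paper uses without comment.
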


\begin{proof}
Adopting the same notation and reasoning as in the preceding lemma, we deduce that $R_{33} = \dfrac{R_g}{3}$. Since $R_g = R_{11} + R_{22} + R_{33}$, it follows that
$$
R_{11} + R_{22} = \frac{2R_g}{3}.
$$

Suppose that $R_{11} \ne R_{22}$ and that neither equals $\dfrac{R_g}{3}$. Then the Ricci tensor would have three distinct eigenvalues at $(t,q)$, and Lemma~\ref{lem2} would force $\dim \mathcal{P}\leqslant 1$, contradicting the assumption $\dim \mathcal{P}\geqslant 2$. Hence, either $R_{11} = R_{22}$ or one of them equals $\dfrac{R_g}{3}$.

In the first case, $R_{11} = R_{22}$ together with $R_{11} + R_{22} = \dfrac{2R_g}{3}$ gives $R_{11} = R_{22} = \dfrac{R_g}{3}$.  
In the second case, say $R_{11} = \dfrac{R_g}{3}$; then $R_{22} = \dfrac{2R_g}{3} - R_{11} = \dfrac{R_g}{3}$ as well. Thus, in either case,
$$
R_{11} = R_{22} = R_{33} = \frac{R_g}{3},
$$
which means that $(M,g)$ is Einstein at $(t,q)$. Since the point was arbitrary, $(M,g)$ is a space form.
\end{proof}

\begin{proposition}\label{prop3}
Assume $\dim \mathcal{P}\geqslant 2$, and let $f_1, f_2$ be two linearly independent static potentials. Denote by $P_1$ and $P_2$ connected components of $f_1^{-1}(0)$ and $f_2^{-1}(0)$, respectively. If $P_1 \cap P_2 \ne \varnothing$, then
\begin{enumerate}
    \item[(i)] $(M,g)$ is Einstein along $P_1 \cup P_2$;
    \item[(ii)] $(M,g)$ is Einstein in an open neighborhood of $P_1 \setminus f_2^{-1}(0)$ and of $P_2 \setminus f_1^{-1}(0)$.
\end{enumerate}
\end{proposition}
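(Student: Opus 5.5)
The plan is to follow the Miao--Tam argument and propagate information along the zero sets using Lemma~\ref{lem4}. First, at a point $p\in P_1\cap P_2$ we have $f_1(p)=f_2(p)=0$. By Lemma~\ref{lem1}(i), $\nabla f_1(p)$ and $\nabla f_2(p)$ are nonzero and normal to the totally geodesic surfaces $P_1$ and $P_2$. These normals cannot be parallel. Indeed, if $\nabla f_2(p)=c\nabla f_1(p)$, then $h=f_2-cf_1$ is a static potential with $h(p)=0$ and $\nabla h(p)=0$. The static equation is a second-order linear ODE along every geodesic through $p$, so $h$ vanishes on a neighborhood of $p$, hence $h\equiv 0$ by unique continuation (or analyticity along geodesics). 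This contradicts linear independence. So $P_1$ and $P_2$ intersect transversally at $p$, and locally $P_1\cap P_2$ is a curve $\gamma$.

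Next we restrict $f_2$ to $P_1$. By Lemma~\ref{lem4} applied with $\tilde f=f_1$, the function $u=f_2|_{P_1}$ satisfies
\[
\Hess_{P_1} u=\tfrac12\left(K-\tfrac{R_g}{2}\right)u\, g_{P_1},
\]
with zero set $P_1\cap P_2$ and $\nabla u\neq0$ along $\gamma$. By the same lemma, $(K-\tfrac{R_g}{6})u^3$ is constant on $P_1$. Evaluating this constant on $\gamma$, where $u=0$, shows it is zero. Hence $K=\tfrac{R_g}{6}$ on the open dense set $\{u\neq0\}\subset P_1$ (dense because $u$ solves a linear ODE along geodesics and is not identically zero), and by continuity on all of $P_1$. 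By Lemma~\ref{lem1}(iv), $R_{11}=R_{22}=R_{33}=\tfrac{R_g}{3}$ along $P_1$, so $g$ is Einstein along $P_1$. Exchanging the roles of $f_1$ and $f_2$ gives the same conclusion along $P_2$, which proves (i).

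For (ii), take $q\in P_1\setminus f_2^{-1}(0)$. Then $f_2$ does not vanish near $q$, and we set $N=f_2$ (up to sign) and $Z=f_1/f_2$. The function $Z$ vanishes on $P_1$ with $\nabla Z=\nabla f_1/f_2\neq0$ there, so it is nonconstant. By Lemma~\ref{lem3}(iii), a neighborhood $U$ of $q$ is a warped product $((-\varepsilon,\varepsilon)\times\Sigma, N^2dt^2+g_0)$ whose slice $\Sigma_0$ is a piece of $P_1$. This slice has constant Gauss curvature $\tfrac{R_g}{6}$ by step (i). Lemma~\ref{lem6} then shows that $g$ is a space form, hence Einstein, on $U$. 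The symmetric argument handles $P_2\setminus f_1^{-1}(0)$.

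The main obstacle is the density/continuity step: showing that $K=\tfrac{R_g}{6}$ holds on all of $P_1$ rather than only near $\gamma$, since the relation $(K-\tfrac{R_g}{6})u^3=\mathrm{const}$ only gives information where $u\neq0$. I would first try the ODE argument above: $u$ satisfies a linear second-order ODE along each geodesic of $P_1$, so it cannot vanish on an open set unless it vanishes identically, and it does not vanish identically because $\nabla u\neq0$ on $\gamma$. A secondary point is that Lemma~\ref{lem6} requires the slice $\Sigma_0$ to have constant curvature $\tfrac{R_g}{6}$; this is supplied by step (i) applied to the component $P_1$.
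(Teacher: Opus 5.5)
Your proposal is correct and follows the paper's proof essentially step by step. It uses transversality of $P_1$ and $P_2$, then the vanishing of the constant $\bigl(K-\tfrac{R_g}{6}\bigr)f^3$ from Lemma~\ref{lem4} to get $K=\tfrac{R_g}{6}$ and hence the Einstein condition via Lemma~\ref{lem1}(iv), and finally the warped-product neighbourhood of Lemma~\ref{lem3}(iii) combined with Lemma~\ref{lem6} for (ii). Your explicit justifications of transversality (a static potential vanishing with its gradient at a point is identically zero) and of the density of $\{f_2|_{P_1}\neq 0\}$ (via the linear ODE along geodesics of $P_1$) fill in two steps that the paper only asserts.
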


\begin{proof}
Since $f_1$ and $f_2$ are linearly independent, their gradients $\nabla^g f_1$ and $\nabla^g f_2$ are linearly independent at every point of $f_1^{-1}(0) \cap f_2^{-1}(0)$. Hence, the intersection $P_1 \cap P_2$ is a smooth embedded curve. As both $P_1$ and $P_2$ are totally geodesic (Lemma~\ref{lem1}(i)), this curve is a geodesic.

Let $K_1$ and $K_2$ denote the Gauss curvatures of $P_1$ and $P_2$, respectively. By Lemma~\ref{lem4}, the quantities $\left(K_1 - \dfrac{R_g}{6}\right) f_2^3$ and $\left(K_2 - \dfrac{R_g}{6}\right) f_1^3$ are constant on $P_1$ and $P_2$, respectively. Since $f_1 = f_2 = 0$ on $P_1 \cap P_2$, both constants must vanish. The zero sets $P_1 \cap f_2^{-1}(0)$ and $P_2 \cap f_1^{-1}(0)$ contain nontrivial curves, so the vanishing of these constants implies
$$
K_1 = \frac{R_g}{6} \quad \text{and} \quad K_2 = \frac{R_g}{6} \quad \text{on } P_1 \cup P_2.
$$
By Lemma~\ref{lem1}(iv), this yields $R_{11} = R_{22} = R_{33} = \dfrac{R_g}{3}$ along $P_1 \cup P_2$, i.e., $(M,g)$ is Einstein there. This establishes (i).

To prove (ii), let $p \in P_1 \setminus f_2^{-1}(0)$. Then $f_2$ is nonvanishing in a neighborhood $U$ of $p$. Define $Z = f_1 / f_2$ on $U$; note that $Z = 0$ on $P_1 \cap U$. By Lemma~\ref{lem3}(iii), there exists an open neighborhood $\Omega \subset U$ of $p$, diffeomorphic to $(-\varepsilon, \varepsilon) \times \Sigma$, where $\Sigma \subset P_1$ is a small surface patch containing $p$, such that $Z$ is constant on each slice $\{t\} \times \Sigma$, and the metric on $\Omega$ takes the warped product form
$$
g = f_2^2 \, dt^2 + g_0,
$$
with $g_0$ the induced metric on $\Sigma$. From (i), $(\Sigma, g_0)$ has constant Gauss curvature $R_g/6$.

Since $\dim \mathcal{P}\geqslant 2$ on $\Omega$, Lemma~\ref{lem6} implies that the metric is Einstein throughout $\Omega$. An identical argument applies to points in $P_2 \setminus f_1^{-1}(0)$. This proves (ii).
\end{proof}

We now ready to prove Theorem~\ref{T1}.

\begin{proof}
Set $S = f_1^{-1}(0) \cap f_2^{-1}(0)$. Since $f_1$ and $f_2$ are linearly independent, their gradients are linearly independent along $S$, so $S$ is a smooth embedded curve. Consequently, its complement $M \setminus S$ is path-connected.

For any point $p \in M \setminus S$, at least one of $f_1(p)$ or $f_2(p)$ is nonzero. Hence, in a neighborhood of $p$, one of the potentials is nonvanishing, and the metric $g$ is real-analytic there (as static metrics are analytic in harmonic coordinates see~\cite[Proposition 2.8]{corvino2000scalar}). By Proposition~\ref{prop3}(ii), $(M,g)$ is Einstein in a neighborhood of every point of $M \setminus S$. Since the condition of being Einstein is closed and $M \setminus S$ is dense and connected, it follows that $(M,g)$ is Einstein in on all of $M$, i.e., it is a space form.

The final classification follows from the standard rigidity of complete, simply connected $3$-dimensional space forms: if $R_g > 0$, the universal Riemannian cover is the unit sphere (up to homothety); if $R_g < 0$, it is hyperbolic space (up to homothety). 

Recall that $\mathcal P$ on $\mathbb S^3$ or $\mathbb H^3$ is generated by the coordinate functions $x_0,\ldots, x_3$ in 4-dimensional Euclidean or Minkowski space of signature $(-,+,+,+)$, respectively. Suppose there is a linear combination  \( L(x) = a_0x_0 + \cdots + a_3x_3 \in span\{x_0,\ldots, x_3\} \) which is preserved by a subgroup of the isometry group acting freely and properly discontinuously. $L$ corresponds to a fixed nonzero vector \( v = (a_0, \ldots, a_3) \) in the ambient space. A subgroup \( \Gamma \) of the isometry group preserves \( L \) if and only if it fixes \( v \) (i.e., \( \Gamma \) is contained in the stabiliser of \( v \)). 

Consider $\mathbb S^3$ in Euclidean space. Any nonzero vector \( v \) has stabiliser \( O(3) \) (rotations around the axis through \( v \)). This group fixes the two antipodal points \( \pm v / |v| \) on the sphere. Every nontrivial element of \( O(3) \) has fixed points (the axis), so no subgroup (except the trivial one) preserving $v$ can act freely on \( \mathbb S^3 \). Hence no static manifolds are obtained as quotient of $\mathbb S^3$ (except $\mathbb S^3$).

Consider $\mathbb H^3$ in Minkowski space. If \( v \) is spacelike (\( v \cdot v > 0 \)), its stabiliser is \( O(2,1) \). Inside it one can find infinite discrete subgroups acting freely and properly discontinuously on \( \mathbb{H}^3 \), e.g., a cyclic group generated by a hyperbolic element (a pure boost). Such an element has no fixed points in \( \mathbb{H}^3 \) and the quotient \( \mathbb{H}^3 / \Gamma \) is a smooth hyperbolic 3-manifold.  If \( v \) is lightlike (\( v \cdot v = 0 \)), its stabiliser is the Euclidean group \( \operatorname{Isom}(\mathbb{E}^2) \). It contains parabolic subgroups (e.g., translations along horospheres) that also act freely and properly discontinuously, giving smooth quotients. Timelike vectors (\( v \cdot v < 0 \)) give stabiliser \( O(3) \), which fixes a point in \( \mathbb{H}^3 \) and cannot act freely. Hence $v$ is lightlike or spacelike and we are interested in the subgroups in \( \operatorname{Isom}(\mathbb{E}^2)\) or \( O(2,1) \), respectively, acting freely and properly discontinuously. Consider these two cases separately.

\medskip

\textit{Case 1}: \(v\) is lightlike. The stabiliser of a lightlike vector is isomorphic to the Euclidean group \(\operatorname{Isom}(\mathbb{E}^2) = \mathbb{R}^2 \rtimes O(2)\). Its elements are: reflections, rotations, glide reflections, and translations. But rotations and reflections clearly have fixed points and glide reflections do not form a subgroup.

Hence \(\Gamma\) is contained in the translation subgroup \(\mathbb{R}^2\). The translations act on the orthogonal complement \(v^\perp/(\mathbb{R}v)\) (a Euclidean plane) as ordinary translations, so they have no nonzero fixed vectors in that plane. The only common fixed vector is \(v\) itself. Consequently, $\dim \mathcal P=\dim V^\Gamma = 1$. But this contradicts the assumption that $\dim \mathcal P \geqslant 2$. 

Hence $v$ cannot be lightlike and the final result is described by \textit{Case 2} below.

\medskip

\textit{Case 2}: \(v\) is spacelike. Write
\[
\mathbb{R}^{3,1} = \mathbb{R}v \oplus W,\qquad W = v^\perp,
\]
where \(W\) inherits a Lorentzian metric of signature \((-,+,+)\). Every \(\gamma\in\Gamma\subset O(2,1)\) acts trivially on \(v\) and as a Lorentz transformation on $W$. Then $\gamma$ can be:        
\begin{itemize}
    \item elliptic, i.e., it fixes a timelike line in \(W\); then it has fixed points in \(\mathbb{H}^3\) (a geodesic), hence cannot belong to a free action;
    \item hyperbolic, i.e., it fixes a spacelike line in \(W\) and has no timelike fixed vectors; there are no fixed points in \(\mathbb{H}^3\);
    \item parabolic, i.e., it fixes a lightlike line in \(W\) and has no timelike fixed vectors; there are no fixed points in \(\mathbb{H}^3\).
\end{itemize}
Thus every nontrivial \(\gamma\in\Gamma\) is either hyperbolic or parabolic. Each such element fixes a unique line \(\ell_\gamma\subset W\).

Let
\[
W^\Gamma = \{ w\in W \mid \gamma w = w\ \forall\gamma\in\Gamma \}.
\]
Then \(V^\Gamma = \mathbb{R}v \oplus W^\Gamma\). For any two elements \(\gamma_1,\gamma_2\in\Gamma\) we have \(\ell_{\gamma_1}\cap\ell_{\gamma_2}\) is either \(\{0\}\) (if the lines are distinct) or the line itself (if they coincide). Hence \(W^\Gamma\) is the intersection of all these lines over \(\gamma\in\Gamma\setminus\{\mathrm{id}\}\). Therefore:
\[
\dim W^\Gamma = 
\begin{cases}
0 &\text{if not all elements share a common fixed line},\\
1 &\text{if they do}.
\end{cases}
\]
Consequently \(\dim V^\Gamma = 1 + \dim W^\Gamma\) is either \(1\) or \(2\). We drop the first option, since we are interested in the case where $\dim \mathcal P \geqslant 2$. Thus $\Gamma$ must be cyclic (generated by a unique element and discrete, since it acts properly discontinuously). If \(\Gamma\) is generated by a hyperbolic element, it fixes its own spacelike eigenline in \(W\). Hence \(W^\Gamma\) is that line and \(\dim V^\Gamma = 2\). If \(\Gamma\) is generated by a parabolic element, it fixes a lightlike line in \(W\). Again \(\dim V^\Gamma = 2\).  So, in both cases $\dim\mathcal P=2$.
\end{proof}

We conclude this section by proving Theorem~\ref{T3}. To this end, we first establish the following proposition.

\begin{proposition}\label{prop4}
Assume $\dim \mathcal{P}\geqslant 2$. Let $f_1, f_2$ be two linearly independent static potentials and let $\Sigma=f_1^{-1}(0)\neq\varnothing$ be closed. If $f_2$ does not vanish on $\Sigma$, the following hold:
\begin{enumerate}
    \item[(i)] $(M,g)$ is nice along $\Sigma$;
    \item[(ii)] $(M,g)$ is nice in an open neighborhood of $\Sigma$.
\end{enumerate}
Moreover, if we additionally assume that $R_g\ne 0$, then $\dim \mathcal{P}= 2$.
\end{proposition}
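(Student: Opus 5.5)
The plan is to work near $\Sigma$ in the warped-product chart of Lemma~\ref{lem3}, with $N=f_2$ (nonvanishing near $\Sigma$ by hypothesis) and $Z=f_1/f_2$. Then $\Sigma$ is a level set $\{Z=0\}$, and near each component of $\Sigma$ the metric is $f_2^2\,dt^2+g_0$ with totally geodesic slices. Everything reduces to showing that on each (compact) component of $\Sigma$ the Gauss curvature is $K\equiv \tfrac{R_g}{2}$. Lemma~\ref{lem1}(iv) then gives $R_{11}=R_{22}=\tfrac{R_g}{2}$ and $R_{33}=0$ along $\Sigma$, which is (i) with $\lambda=\tfrac{R_g}{2}$.

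For (i), restrict $u=f_2|_\Sigma$, which has a fixed sign on each component, say $u>0$. Lemma~\ref{lem4} gives
$$
\Hess_\Sigma u=\psi\,u\,g_\Sigma,\qquad \psi=\tfrac12\Big(K-\tfrac{R_g}{2}\Big),
$$
together with $\big(K-\tfrac{R_g}{6}\big)u^3=c$ constant. Hence $K=\tfrac{R_g}{6}+c\,u^{-3}$, so $\psi$ is an explicit function of $u$ alone, namely $\psi=\tfrac12\big(-\tfrac{R_g}{3}+c\,u^{-3}\big)$.

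If $u$ is constant on the component, then $\Hess_\Sigma u=0$ forces $\psi=0$, i.e.\ $K=\tfrac{R_g}{2}$, and we are done. The real work is to exclude nonconstant $u$. I would try first the first integral $\nabla|\nabla u|^2=2\psi u\nabla u$, which yields $|\nabla u|^2=G(u)$ for an explicit $G$. Next I would evaluate at the maximum and minimum of $u$ on the compact surface; there $G=0$, and $\Hess_\Sigma u=\psi u\,g_\Sigma$ has sign $\le 0$ at the maximum and $\ge 0$ at the minimum. I would combine this with the monotonicity of $\psi$ in $u$ and with the integral identity $\int_\Sigma|\nabla u|^2=-2\int_\Sigma\psi u^2$ to reach a contradiction unless $c=0$ and $\psi\equiv 0$.

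If this fails, the fallback is structural. A nonconstant $u$ makes $\nabla^\Sigma u$ a nontrivial conformal vector field on a closed surface, so the component is a rotationally symmetric sphere. I would then test $K=\tfrac{R_g}{6}+c u^{-3}$ against Gauss--Bonnet and the Kazdan--Warner identity $\int_\Sigma\langle\nabla K,\nabla u\rangle=0$, which is valid for conformal gradient fields. The latter gives $-3c\int u^{-4}|\nabla u|^2=0$, hence $c=0$, hence $\Hess_\Sigma u=-\tfrac{R_g}{6}u\,g$. Finally I would rule out this Obata-type case, or show that it again forces $u$ to be constant, when combined with the sign information at the extrema. This step is the main obstacle.

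For (ii), I would split according to $R_g$. If $R_g\neq0$, then $\Sigma$ has constant curvature $K=\tfrac{R_g}{2}\neq0$ and $g$ is nice along it, so Lemma~\ref{lem5} applied in the chart above gives niceness in a neighborhood. If $R_g=0$, then $K=0=\tfrac{R_g}{6}$, so Lemma~\ref{lem6} shows the neighborhood is flat, which is nice with $\lambda=0$.

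For the last assertion, when $R_g\ne0$ the Ricci eigenvalues near $\Sigma$ are $(\tfrac{R_g}{2},\tfrac{R_g}{2},0)$, so $g$ is not Einstein there. Proposition~\ref{prop2} then gives $\dim\mathcal P\le2$, and the hypothesis gives $\dim\mathcal P=2$.
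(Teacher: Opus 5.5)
Your treatment of (ii) and of the final assertion matches the paper's: Lemma~\ref{lem5} when $R_g\neq0$, Lemma~\ref{lem6} when $R_g=0$, then Proposition~\ref{prop2}. The genuine difference is in (i). The paper quotes Proposition~24 of \cite{ambrozio2017static}: a positive solution $u=f_2|_P$ of the equation of Lemma~\ref{lem4} on a closed surface forces $K\equiv R_g/2$. You prove this from scratch, and your fallback argument does so correctly. Since $\nabla u$ is a closed conformal field, a nonconstant $u$ lives on a sphere with exactly two nondegenerate critical points. The Kazdan--Warner identity holds here: it is classical on $\mathbb{S}^2$, and can also be checked directly because $\nabla u/|\nabla u|^2$ is divergence-free away from the critical points. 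Combined with $\nabla K=-3c\,u^{-4}\nabla u$, it gives $c=0$. The paper's route buys brevity. Yours is self-contained and shows exactly where closedness of $\Sigma$ enters, namely the smooth closing of the sphere at both poles; the paper stresses that this hypothesis cannot be dropped.

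Two remarks on your write-up. First, the step you flag as the main obstacle is immediate. Once $c=0$, $\psi\equiv -R_g/6$ is constant. The signs of $\Hess_\Sigma u=\psi u\,g_\Sigma$ at the maximum and minimum of the positive function $u$ then give $R_g\geqslant 0$ and $R_g\leqslant 0$. Hence $\Hess_\Sigma u=0$ and $u$ is constant, a contradiction.

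Second, your first route cannot finish on its own. The condition $G(u_{\min})=G(u_{\max})=0$ yields $c=\frac{R_g}{6}u_{\min}u_{\max}(u_{\min}+u_{\max})$. For $R_g\leqslant 0$ this does contradict your monotonicity and extremal-sign analysis. For $R_g>0$, however, it is compatible with all of it. Moreover, $\int_\Sigma|\nabla u|^2=-2\int_\Sigma\psi u^2$ is just integration by parts of $\Delta_\Sigma u=2\psi u$. In fact, rotationally symmetric teardrops built from the ODE $u''=-\frac{R_g}{6}u+\frac{c}{2}u^{-2}$, smooth at one pole and conical at the other, satisfy every one of those relations. Only the global closing condition, which is exactly what Kazdan--Warner encodes, excludes them. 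So when $R_g>0$ the fallback is not optional.
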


\begin{proof}
Let $P$ be a connected component of $\Sigma$ and $K$ its Gauss curvature. By Lemma~\ref{lem4}, the restriction of $f_2$ to this surface satisfies
$$
\Delta_{P} f_2 = \left(K - \frac{R_g}{2}\right) f_2.
$$
Since $f_2$ is nowhere zero on $P$, after possibly replacing $f_2$ by its negative, we may assume $f_2 > 0$ on $P$. By Proposition~24 in~\cite{ambrozio2017static}, this positivity forces $P$ to have constant Gauss curvature, specifically $K=\dfrac{R_g}{2}$. Lemma~\ref{lem1}(iv) then implies that the Ricci eigenvalues satisfy $R_{11} = R_{22} = \dfrac{R_g}{2}$ and $R_{33} = 0$ along $P$, i.e., $(M,g)$ is nice along $P$. If  $R_g\ne 0$, the metric is not Einstein at any point of $P$, and thus Proposition~\ref{prop2} yields $\dim \mathcal{P}\leqslant 2$, forcing $\dim \mathcal{P}= 2$.

To establish (ii), fix $p \in P$. Since $f_2$ does not vanish on $P$, there exists an open neighborhood $U$ of $p$ where $f_2 \ne 0$. Define $Z = f_1 / f_2$ on $U$; then $Z = 0$ on $P \cap U$. By Lemma~\ref{lem3}(iii), there is an open neighborhood $\Omega \subset U$ of $p$, diffeomorphic to $(-\varepsilon, \varepsilon) \times \Sigma_0$, with $\Sigma_0 \subset P$ a small surface patch containing $p$, such that $Z$ is constant on each slice $\{t\} \times \Sigma_0$, and the metric on $\Omega$ takes the form
$$
g = f_2^2 \, dt^2 + g_0,
$$
where $g_0$ is the induced metric on $\Sigma_0$. From (i), $(\Sigma_0, g_0)$ has constant Gauss curvature $R_g / 2$ and $(M,g)$ is nice along it.

If $R_g\ne 0$, Lemma \ref{lem5} implies that $g$ is nice in $\Omega$.

If $R_g=0$, Lemma \ref{lem6} implies that $g$ is flat in $\Omega$, in particular, it is also nice in there. 
\end{proof}

We now proceed to the proof of Theorem~\ref{T3}.

\begin{proof}
Let $p \in M$ be arbitrary. Since the zero sets of $f_1$ and $f_2$ do not intersect, at least one of $f_1(p)$ or $f_2(p)$ is nonzero. Consequently, in a neighborhood of $p$, one of the potentials is nonvanishing, and the metric $g$ is real-analytic there. Because $M$ is path-connected and, by Proposition~\ref{prop4}(ii), the metric is nice in a neighborhood of every point, it follows that $(M,g)$ is globally nice.

From Proposition~\ref{prop4} we also have $\dim \mathcal{P}=2$ when $R_g\neq 0$.

If $R_g>0$, the Cheeger-Gromoll splitting theorem implies that the universal cover of $(M,g)$ is isometric to $\mathbb{R} \times \mathbb{S}^2$ equipped with the product metric. Hence $(M,g)$ is covered by $\mathbb{R} \times \mathbb{S}^2$ with the product metric, as claimed. The complete classification then follows from Theorem~3.1(iv) in~\cite{kobayashi1982differential} (see also Examples 1 and 2 therein).

If $R_g=0$, a similar argument shows that $(M,g)$ is covered by $\mathbb{R} \times \mathbb{E}^2$ with the product metric, which is isometric to $\mathbb{E}^3$. In $\mathbb{E}^3$, however, the zero set of a nonconstant static potential is not closed. Thus $(M,g)$ must be a nontrivial quotient of $\mathbb{R} \times \mathbb{E}^2$. This case is already covered by the analysis in the proof of Theorem~\ref{T2} and corresponds precisely to the flat manifolds of types $N_3$ and $N_6$ (see Appendix~\ref{appC}), i.e., $\mathbb{R} \times \mathbb{T}^2$, where $\mathbb{T}^2$ is a flat torus, and $\mathbb{R} \times \mathbb{KL}$, where $\mathbb{KL}$ is a flat Klein bottle. In both cases $\dim\mathcal{P}=2$. Manifolds of type $N_5$ are excluded because for them the zero set of a nonconstant static potential is not closed. In all remaining flat cases, the space $\mathcal{P}$ of static potentials is one-dimensional and consists only of constant functions.

\end{proof}

\section{Explicit examples and related questions}

\subsection{Some examples}\label{examples}

In this section, we present two explicit examples of static manifolds that fall outside the scope of the main classification theorems discussed in the Introduction. These manifolds are neither space forms, nor products, and they do not possess parallel Ricci curvature. They are not locally conformally flat. The author is grateful to Lucas Ambrozio for providing these examples.

We begin with the Schwarzschild spacetime of mass $m>0$, given by the Lorentzian metric  
$$
ds^2
= -\Bigl(1 - \frac{2m}{r}\Bigr)\,dt^2
+ \Bigl(1 - \frac{2m}{r}\Bigr)^{-1} dr^2
+ r^2\bigl(d\psi^2 + \sin^2\psi\, d\phi^2\bigr),
\qquad r > 2m.
$$

Applying a \textit{Wick rotation} $t = iT$ and subsequently rescaling the Euclidean time coordinate as $T = \theta$ (so that $\theta$ has period $8\pi m$), we obtain the smooth Riemannian metric  
\begin{align} \label{ESch}
ds^2
= \Bigl(1 - \frac{2m}{r}\Bigr)\,d\theta^2
+ \Bigl(1 - \frac{2m}{r}\Bigr)^{-1} dr^2
+ r^2\bigl(d\psi^2 + \sin^2\psi\, d\phi^2\bigr),
\qquad r > 2m.
\end{align}
In fact, passing to, for example, the Kruskal-Szekeres coordinates in the Schwarzschild spacetime, one can verify that $r=2m$ is only a coordinate singularity and the metric can be extended as a complete metric on $\mathbb{R}^2 \times \mathbb{S}^2$. The same can be done for the metric~\eqref{ESch}. This defines a Ricci-flat (hence Einstein with zero scalar curvature) metric on $\mathbb{R}^2 \times \mathbb{S}^2$.

Consider the associated three-dimensional static metric:  
\begin{align}
\label{Euclidsch}
g = \left(1-\frac{2m}{r}\right)d\theta^2
      + \frac{dr^2}{1-\dfrac{2m}{r}}
      + r^2 d\psi^2 ,
\qquad r > 2m.
\end{align}
In the Kruskal-Szekeres coordinates the above metric takes the form
\[
g = \frac{32m^3}{r}\, e^{-r/(2m)}\bigl(dX^2 + dY^2\bigr) + r^2\theta^2,
\]
where
\[
X^2 + Y^2 = \left(\frac{r}{2m}-1\right)e^{r/(2M)}.
\]
This shows that the metric~\eqref{Euclidsch} can be defined on $\mathbb{R}^2 \times \mathbb{S}^1$ as a complete metric. This metric is static: The associated static potentials are  
$$
V_1(r,\psi) = r\sin\psi, \qquad V_2(r,\psi) = r\cos\psi,
$$
which are linearly independent. In fact, they form a basis of the space $\mathcal{P}$ of static potentials, so $\dim\mathcal{P} = 2$, as we explain below (see~Proposition~\ref{specstatdim}).

A similar construction can be carried out using the Schwarzschild--anti-de Sitter spacetime. After Wick rotation, one obtains the static Riemannian metric  
$$
g = \left(1-\dfrac{2m}{r}+r^2\right)\,d\theta^2 + \frac{dr^2}{1-\dfrac{2m}{r}+r^2} + r^2 d\psi^2,
$$
which has negative scalar curvature and again admits two linearly independent static potentials $V_1 = r\sin\psi$ and $V_2 = r\cos\psi$, yielding $\dim\mathcal{P} = 2$ (see~Proposition~\ref{specstatdim} again).

\subsection{General form and geometric interpretation}\label{meaning}

Both examples in the previous section belong to a broader family of static metrics of the form  
\begin{align}
\label{form}
g=\left(k_1+\dfrac{k_2}{r}+k_3r^2\right)dx^2+\dfrac{dr^2}{k_1+\dfrac{k_2}{r}+k_3r^2}+r^2d\phi^2,
\end{align}
where $k_1, k_2, k_3$ are real constants. According to \cite[Proposition 10]{tod2000spatial}, any static metric that admits two functionally independent static potentials is locally of this form in suitable coordinates $(x, r, \phi)$. The geometric meaning of the constants $k_i$ is as follows:

\begin{itemize}
    \item The coefficient $k_2$ measures the deviation from being Einstein. Indeed, by \cite[Proposition 9]{tod2000spatial}, the Ricci tensor in these coordinates satisfies  
    $$
    \label{ricci}
    R_{11}=R_{22}=-\dfrac{k_2}{2r^3}+\dfrac{R_g}{3}, \quad R_{33}=\dfrac{k_2}{r^3}+\dfrac{R_g}{3}.
    $$

    \item The coefficient $k_3$ is directly related to scalar curvature:  
    $$
    k_3 = -\dfrac{R_g}{6},
    $$
    as shown in the proof of \cite[Proposition 10]{tod2000spatial}. 
    
    \item The constant $k_1$ governs the static potentials (see the proof of  \cite[Proposition 10]{tod2000spatial}, formula (38)). Specifically, static potentials of the form $f = r H(\phi)$ arise when $H$ satisfies the ordinary differential equation  
    $$
    H_{\phi\phi} = -k_1 H.
    $$
    The solution space of this equation is two-dimensional, yielding exactly two linearly independent static potentials. Moreover, any static potential $f$ satisfies the eigenvalue equation  
    $$
    \Delta_g f = 3k_3 f.
    $$
\end{itemize}

\subsection{Dimension of the space of static potentials}

We now establish a precise statement about the dimension of $\mathcal{P}$ for metrics of the form \eqref{form}.

\begin{proposition}\label{specstatdim}
Suppose a manifold $M$ admits coordinates $(x, r, \phi)$ and a metric of the form
$$
g = \left(k_1 + \frac{k_2}{r} + k_3 r^2\right)dx^2 
    + \frac{dr^2}{k_1 + \dfrac{k_2}{r} + k_3 r^2} 
    + r^2 d\phi^2,
$$
with $k_2 \neq 0$. If either ($\phi$ is $2\pi$-periodic and $k_1 > 0$) or ($\phi$ is non-periodic and $k_1 \leqslant 0$), then the space $\mathcal{P}$ of static potentials is two-dimensional. Specifically, $\mathcal{P}$ is spanned by:
\begin{itemize}
    \item $r\cos\phi$ and $r\sin\phi$, in the periodic case ($k_1 > 0$);
    \item $r$ and $r\phi$, in the non-periodic case ($k_1 = 0$);
        \item $r\cosh\phi$ and $r\sinh\phi$, in the non-periodic case ($k_1 < 0$).
\end{itemize}
\end{proposition}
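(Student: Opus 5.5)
Let $h(r)=k_1+\frac{k_2}{r}+k_3r^2$. The plan has two halves: show that the listed functions are static potentials, and show that nothing else is. For the upper bound I will use Lemma~\ref{lem2}/Proposition~\ref{prop2} rather than solving the static system directly.

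\emph{Lower bound.} We have $g=h\,dx^2+h^{-1}dr^2+r^2d\phi^2$, with orthonormal coframe $\sqrt h\,dx$, $h^{-1/2}dr$, $r\,d\phi$. I will check that $f=rH(\phi)$ satisfies $\Hess_g f=(\Ric_g-\frac{R_g}{2}g)f$ exactly when $H''=-k_1H$; this is the computation behind \cite[Proposition 10]{tod2000spatial}, formula (38). The only nonzero Christoffel symbols are those of this diagonal metric whose coefficients depend on $r$ alone. One computes $\Hess f$ for $f=rH$ and compares it with the Ricci tensor from \cite[Proposition 9]{tod2000spatial}, using $R_g=-6k_3$.
\begin{itemize}
\item The $dx^2$ component gives $\frac12 h' H\,r\,h$. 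This must match $(R_{11}-R_g/2)rH\,h$, which reduces to $\frac12 h'=\frac{k_2}{2r^2}\cdot(-1)\cdots$. I will just verify it as an algebraic identity in $k_2,k_3$.
\item The $dr^2$ and mixed $dr\,d\phi$ components are handled similarly. The $d\phi^2$ component is the one that produces $H''+k_1H=0$.
\end{itemize}
The solution space of $H''=-k_1H$ is spanned by $\cos\sqrt{k_1}\phi,\ \sin\sqrt{k_1}\phi$ for $k_1>0$ (normalized to $\cos\phi,\sin\phi$ when $\phi$ is $2\pi$-periodic, after rescaling), by $1,\phi$ for $k_1=0$, and by $\cosh,\sinh$ for $k_1<0$. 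In each case this gives two linearly independent globally defined potentials. The periodicity hypothesis is what makes the functions single-valued when $k_1>0$. When $k_1\le 0$ and $\phi$ is non-periodic, the functions are automatically well defined.

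\emph{Upper bound.} Since $k_2\neq0$, the Ricci formulas give $R_{33}-R_{11}=\frac{3k_2}{2r^3}\ne0$ at every point. So $g$ is nowhere Einstein, and Proposition~\ref{prop2} yields $\dim\mathcal P\le2$. Combined with the two potentials above, $\dim\mathcal P=2$ with the stated bases.

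The main obstacle is the Hessian verification in the first part. In particular, one must confirm that the $x$-components impose no further constraint on $H$, which forces the potentials to be independent of $x$. I will also need to confirm that the mixed $r\phi$ component vanishes identically for $f=rH$. I would handle this by writing $\nabla^2 f$ in the orthonormal frame, where the connection forms of a metric of this cohomogeneity-one type are explicit.
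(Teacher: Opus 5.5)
Your argument is correct, and for the upper bound it takes a genuinely different route from the paper.

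\textbf{Upper bound.} The paper proves $\dim\mathcal P\le 2$ with the zero-set technique. For each $\phi_0$ it uses the combination $ru$ of the two known potentials that vanishes exactly on the totally geodesic slice $\{\phi=\phi_0\}$. On that slice Lemma~\ref{lem1}(iv) gives $K-\frac{R_g}{6}=-\frac{k_2}{r^3}$. Lemma~\ref{lem4} then forces any potential to equal $C(\phi_0)\,r$ on the slice, hence $f=C(\phi)r$, and the traced equation $\Delta_g f=3k_3 f$ gives $C''=-k_1C$. You instead note $R_{33}-R_{11}=\frac{3k_2}{2r^3}\neq 0$ and invoke Proposition~\ref{prop2}.

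Your route is shorter and needs only a single non-Einstein point. It is also independent of the lower bound, whereas the paper's argument needs the two explicit potentials in order to have one vanishing on every slice. The paper's route, in exchange, avoids the heavier machinery behind Proposition~\ref{prop2} (Proposition~\ref{prop1}, Lemma~\ref{lem3}, unique continuation). It also stays inside the Lemma~\ref{lem4} technique and identifies every potential concretely as $rC(\phi)$ with $C''=-k_1C$.

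\textbf{Lower bound.} The paper merely cites Tod's formula (38), so your direct check is more self-contained. Fix the $dx^2$ term, which carries a spurious factor $r$. Write $g=g_2+r^2d\phi^2$ with $g_2=h\,dx^2+h^{-1}dr^2$. Then $\Hess_{g_2}r=\frac{h'}{2}g_2$ and $R_{11}-\frac{R_g}{2}=R_{22}-\frac{R_g}{2}=\frac{h'}{2r}$, so the $(x,r)$-block holds identically for $f=rH(\phi)$. The mixed components vanish by the warped-product structure. Since $R_{33}-\frac{R_g}{2}=\frac{h-k_1}{r^2}$, the $\phi\phi$-component reads $rH''+rhH=r(h-k_1)H$, i.e.\ $H''=-k_1H$.

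\textbf{A caveat you share with the paper.} Rescaling to $k_1=1$ changes the period of $\phi$ to $2\pi\sqrt{k_1}$. So if $\phi$ is literally $2\pi$-periodic, the solutions $\cos\sqrt{k_1}\phi,\ \sin\sqrt{k_1}\phi$ are single-valued only when $\sqrt{k_1}\in\mathbb{Z}$. The stated basis $r\cos\phi,\ r\sin\phi$ tacitly assumes $k_1=1$, or equivalently that the period of $\phi$ is $2\pi/\sqrt{k_1}$. Your remark that ``the periodicity hypothesis is what makes the functions single-valued'' should be qualified accordingly.
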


\begin{proof}
We consider three cases based on the sign of $k_1$ (with $k_2 \neq 0$ in all cases). In each case the equation $H_{\phi\phi} = -k_1 H$ has two linearly independent solutions, giving two independent static potentials of the form $r H(\phi)$:
\begin{itemize}
\item If $k_1 > 0$, we may set $k_1 = 1$ for simplicity; then $H_{1,2}(\phi) = \cos\phi,\;\sin\phi$.
\item If $k_1 = 0$, then $H_{1,2}(\phi) = 1,\;\phi$.
\item If $k_1 < 0$, we may set $k_1 = -1$ for simplicity; then $H_{1,2}(\phi) = \cosh\phi,\;\sinh\phi$.
\end{itemize}
For any $\phi_0$ in the domain ($\mathbb{R}$ or $[0,2\pi)$), there exists a nontrivial linear combination of the two basis functions that vanishes at $\phi_0$. Indeed, we can solve respectively
$a\cos\phi_0 + b\sin\phi_0 = 0$, $a + b\phi_0 = 0$, or $a\cosh\phi_0 + b\sinh\phi_0 = 0$ with not both $a,b$ zero. Fix $\phi_0$ and let $u$ be such a combination vanishing at $\phi_0$. Then $ru$ is also a static potential; its zero set is $\{\phi = \phi_0\}$.

Assume, for contradiction, that there exists a third static potential $f$ linearly independent of the two found above ($rH_1$ and $rH_2$). Apply Lemma~\ref{lem4} to $ru$ and $f$ to obtain
$$
\Bigl(K - \frac{R_g}{6}\Bigr) f^3 = \text{const} \quad \text{on } \phi = \phi_0.
$$
By Lemma~\ref{lem1}, the Gauss curvature of the surface $\phi = \text{const}$ is
$$
K = -R_{33} + \frac{R_g}{2} = -\frac{k_2}{r^3} - k_3,
$$
hence
$$
K - \frac{R_g}{6} = -\frac{k_2}{r^3}.
$$
Since $k_2 \neq 0$, this expression depends nontrivially on $r$. Therefore on $\phi = \phi_0$, $f^3$ is proportional to $r^3$ on that surface, i.e., $f=Cr$ for $\phi=\phi_0$. Since $\phi_0$ was chosen arbitrarily, we conclude that, at least locally,
$$
f(r, \phi) = C(\phi) r.
$$

Since $f$ is independent of the coordinate $x$, the Laplacian acts on it as
$$
\Delta_g f = \left(k_1 + \frac{k_2}{r} + k_3 r^2\right) \frac{\partial^2 f}{\partial r^2} 
+ \left(\frac{k_1}{r} + 3k_3 r\right) \frac{\partial f}{\partial r} 
+ \frac{1}{r^2} \frac{\partial^2 f}{\partial \phi^2}.
$$
With $f = C(\phi) r$ we have $f_r = C(\phi)$ and $f_{\phi\phi} = C''(\phi) r$, so
$$
\Delta_g f = \frac{1}{r}\bigl(k_1 C(\phi) + C''(\phi)\bigr) + 3k_3 r\,C(\phi).
$$
The static condition $\Delta_g f = 3k_3 f = 3k_3 C(\phi) r$ yields
$$
C''(\phi) =-k_1 C(\phi).
$$
We see that the function $C$ satisfies the same ODE as $H$, and $f=Cr$. Hence $f$ depends linearly on $rH_1$ and $rH_2$, contradicting the assumption.

Therefore no such third static potential exists, and the space $\mathcal{P}$ is two-dimensional, spanned by the two potentials listed in each case. 
\end{proof}

\begin{remark}
In the case where $k_2=0$ one may has $\dim \mathcal P=4,2$ or $1$. Indeed, consider the metric 
$$
g = r^2dx^2 
    + \frac{dr^2}{r^2} 
    + r^2 d\phi^2,
$$ 
for which $k_1=k_2=0$ and $k_3=1$ (case 5 in the below classification). Using the change of variables $r=1/z$, is not difficult to see that the metric takes the form
$$
g=\dfrac{dx^2+d\phi^2+dz^2}{z^2},
$$
which is exactly the standard metric on hyperbolic 3-space $\mathbb H^3$ in the upper half-space model if $x,\phi \in \mathbb R$. As we know, $\dim \mathcal P=4$: $\mathcal P$ is spanned by 
\begin{align*}
x_0 &= \frac{1 + x^{2} + \phi^{2} + z^{2}}{2z}, \\
x_1 &= \frac{x}{z}, \\
x_2 &= \frac{\phi}{z}, \\
x_3 &= \frac{1 - x^{2} - \phi^{2} - z^{2}}{2z}
\end{align*}
(the coordinate functions in the hyperboloid model). However, if $x$ is periodic and $\phi\in \mathbb R$ or $\phi$ is periodic and $x\in \mathbb R$, we obtain a complete hyperbolic manifold with a rank-1 cusp of infinite volume, for which $\dim\mathcal P=2$ ($x_0+x_3$ and $x_2$ or $x_1$, respectively, descend to the quotient space). Finally, if both $x$ and $\phi$ are periodic, we obtain a complete finite-volume hyperbolic 3-manifold with cusp and $\dim\mathcal P=1$ (only $x_0+x_3$ descends to the quotient space). This shows that in the case where $k_1=k_2=0$ one may has $\dim \mathcal P=2$ or $1$ even if $\phi$ is periodic.
\end{remark}

\subsection{Completeness questions}\label{compquest} Since we consider only complete metrics in this paper, the metric induced on the zero set of any static potential is also complete, as the zero set is totally geodesic. Furthermore, the completeness of the static metric imposes constraints on the coefficients $k_1$, $k_2$, and $k_3$ in~\eqref{form}. Specifically, we seek coefficients for which the function $f(r):=k_1+\dfrac{k_2}{r}+k_3r^2$ admits positive roots. While $f$ may have either one or two positive roots, or none at all, a straightforward analysis yields the table below. We explain the derivation of the table in Section~\ref{table}.

\medskip

\begin{tabular}{|c|c|}
\hline
\textbf{Conditions on \( (k_1, k_2, k_3) \)} & \textbf{\# distinct positive roots} \\
\hline
\(k_3 \neq 0,\; k_2 \neq 0,\; k_3k_2 > 0,\; k_3k_1 < 0,\; 27|k_3|k_2^2 < 4|k_1|^3\) & 2 \\
\hline
\(k_3 \neq 0,\; k_2 \neq 0,\; k_3k_2 > 0,\; k_3k_1 < 0,\; 27|k_3|k_2^2 = 4|k_1|^3\) & 1 (double) \\
\hline
\(k_3 = 0,\; k_2 \neq 0,\; k_1k_2 < 0\) & 1 \\
\hline
\(k_3 \neq 0,\; k_2 = 0,\; k_1k_3 < 0\) & 1 \\
\hline
\(k_3 \neq 0,\; k_2 \neq 0,\; k_1k_3 > 0,\; k_2k_3 < 0\) & 1 \\
\hline
\(k_3 \neq 0,\; k_2 \neq 0,\; k_1 = 0,\; k_2k_3 < 0\) & 1 \\
\hline
\(k_3 \neq 0,\; k_2 \neq 0,\; k_1k_3 < 0,\; k_2k_3 < 0,\; 4k_1^3+27k_3k_2^2 < 0\) & 1 \\
\hline
\(k_3 \neq 0,\; k_2 \neq 0,\; k_1k_3 < 0,\; k_2k_3 < 0,\; 4k_1^3+27k_3k_2^2 = 0\) & 1 \\
\hline
\(k_3 \neq 0,\; k_2 \neq 0,\; k_1k_3 < 0,\; k_2k_3 < 0,\; 4k_1^3+27k_3k_2^2 > 0\) & 1 \\
\hline
\(k_3 = 0,\; k_2 = 0,\; k_1 \neq 0\) & 0 \\
\hline
\(k_3 = 0,\; k_2 \neq 0,\; k_1k_2 \geqslant 0\) & 0 \\
\hline
\(k_3 \neq 0,\; k_2 = 0,\; k_1k_3 \geqslant 0\) & 0 \\
\hline
\(k_3 \neq 0,\; k_2 \neq 0,\; k_1k_3 > 0,\; k_2k_3 \geqslant 0\) & 0 \\
\hline
\(k_3 \neq 0,\; k_2 \neq 0,\; k_1 = 0,\; k_2k_3 > 0\) & 0 \\
\hline
\(k_3 \neq 0,\; k_2 \neq 0,\; k_1k_3 < 0,\; k_2k_3 > 0,\; 27|k_3|k_2^2 > 4|k_1|^3\) & 0 \\
\hline
\end{tabular}

\medskip

Further analysis yields that $g$ of the form~\eqref{form} is complete in the following cases (for an explanation see Section~\ref{metriccomp}).

\begin{enumerate}
  \item \textbf{No roots (smooth on $[0,\infty)$).} \\
        Parameters: $k_2 = 0$, $k_1 > 0$, $k_3 \geqslant 0$. \\
        Domain: $r \in [0,\infty)$. \\
        Periods: $\phi$ with period $2\pi/\sqrt{k_1}$; $x$ can be non-compact or have any period $L>0$. \\
        Geometry: $\mathbb{E}^3$ when $k_3 = 0$, hyperbolic space $\mathbb{H}^3$ when $k_3 > 0$.

  \item \textbf{One simple root, outer domain $[r_0,\infty)$ (non-compact).} \\
        A simple root $r_0>0$ with $f'(r_0)>0$ occurs in the following cases:
        \begin{itemize}
          \item $k_3 > 0$, $k_2 < 0$ (any $k_1$);
          \item $k_3 > 0$, $k_2 = 0$, $k_1 < 0$  (then $r_0 = \sqrt{-k_1/k_3}$);
          \item $k_3 > 0$, $k_2 > 0$ and $k_1 < -3\left(\dfrac{k_2^2 k_3}{4}\right)^{1/3}$ (the larger of two roots);
          \item $k_3 = 0$, $k_1 > 0$, $k_2 < 0$  (then $r_0 = -k_2/k_1$).
        \end{itemize}
        Domain: $r \in [r_0,\infty)$. \\
        Periods: $x$ must have period $4\pi/f'(r_0)$; $\phi$ can be non-compact in some cases or have any period. \\
        Geometry: non-compact manifold with a smooth bolt at $r=r_0$.

  \item \textbf{One simple root, inner domain $[0,r_0]$ (compact).} \\
        Parameters: $k_2 = 0$, $k_1 > 0$, $k_3 < 0$. \\
        Root: $r_0 = \sqrt{-k_1/k_3}$, with $f'(r_0) = -2\sqrt{-k_1k_3} < 0$. \\
        Domain: $r \in [0,r_0]$. \\
        Periods: $\phi$ with period $2\pi/\sqrt{k_1}$; $x$ with period $4\pi/|f'(r_0)| = 2\pi/\sqrt{-k_1k_3}$. \\
        Geometry: compact 3-manifold (squashed 3-sphere) with a smooth axis at $r=0$ and a smooth bolt at $r=r_0$.

  \item \textbf{Two simple roots, bounded domain $[r_1,r_2]$ (compact, fine-tuned).} \\
        Parameters: $k_3 < 0$, $k_1 > 0$, $k_2 < 0$ and $27|k_3|k_2^2 < 4|k_1|^3$ (so two positive roots exist). \\
        Fine-tuning condition for smooth bolts at both ends:
        \[
        |f'(r_1)| = |f'(r_2)| \quad \Longleftrightarrow \quad r_1 r_2 = -\frac{k_1}{3k_3}.
        \]
        Domain: $r \in [r_1,r_2]$. \\
        Periods: $x$ with period $4\pi/|f'(r_1)| = 4\pi/|f'(r_2)|$; $\phi$ any period. \\
        Geometry: compact 3-manifold with two smooth bolts.

  \item \textbf{Double root -- cusp (non-compact).} \\
        A double root $r_0\geqslant 0$ (where $f(r_0)=f'(r_0)=0$) yields a boundary at infinite distance. These occur for:
        \begin{itemize}
          \item $k_3 > 0$, $k_2 = 0$, $k_1 = 0$: double root at $r=0$, domain $(0,\infty)$;
          \item $k_3 > 0$, $k_2 > 0$ and $k_1 = -3\left(\dfrac{k_2^2 k_3}{4}\right)^{1/3}$: double root at $r_0 = \left(\dfrac{k_2}{2k_3}\right)^{1/3}$, domain $[r_0,\infty)$.
        \end{itemize}
        No period conditions are needed; the metric is complete with a cusp at the boundary.
\end{enumerate}

\section{Static manifolds with boundary}

In this section, we focus on the case of static manifolds with boundary. We start with the following lemma, which is repeatedly used in our subsequent analysis.

\begin{lemma}\label{lemomega}
A connected domain $\Omega$ is a static manifold with boundary in the following spaces if and only if:
\begin{enumerate}[(i)]
    \item In $\mathbb{S}^3$: $\Omega$ is a spherical cap or a spherical ring (the region between two concentric geodesic spheres).
    \item In $\mathbb{E}^3$: $\Omega$ is a half-space, a ball, the exterior of a ball, a slab between parallel planes, or a spherical shell (the region between two concentric spheres).
    \item In $\mathbb{R} \times \mathbb{S}^2$: $\Omega = I \times \mathbb{S}^2$, where $I \subset \mathbb{R}$ is a closed interval (finite or infinite). In this case the space of statc potentials is one-dimensional.
\end{enumerate}
\end{lemma}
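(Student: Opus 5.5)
The plan is to work directly from the boundary condition in \eqref{b-static}, since the interior equation is already understood: on $\mathbb{S}^3$ and $\mathbb{E}^3$ (and $\mathbb{H}^3$) the interior static potentials are the restrictions of linear functions $L(x)=a_0x_0+\cdots+a_3x_3$ (for $\mathbb{E}^3$, affine functions $a_0+a\cdot x$). On $\mathbb{R}\times\mathbb{S}^2$ with the product metric, $\mathrm{Ric}=\mathrm{diag}(0,1,1)$ and $R_g=2$, so the static equation reads $\Hess V=(\mathrm{Ric}-g)V$. Thus $\Hess V(\partial_t,\partial_t)=-V$ and $\Hess_{\mathbb{S}^2}V=0$ in the spherical directions. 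The latter forces $V$ to be constant on each sphere, and hence $V=a\cos t+b\sin t$.

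The main work is the boundary condition $\partial_\nu V\, g_{\partial M}=B_{\partial M}V$. First, since $\partial\Omega$ is a hypersurface and $V\not\equiv 0$ on it, I will use the analyticity and density of $\{V\neq 0\}$ on $\partial\Omega$. On that set, $B=(\partial_\nu V/V)\,g_{\partial M}$, so $\partial\Omega$ is totally umbilic. A Codazzi argument in a space form then shows that $\partial_\nu V/V$ is locally constant. Hence each boundary component is an umbilic surface with constant principal curvature $\kappa$, i.e.\ a geodesic sphere or totally geodesic plane in $\mathbb{S}^3$, or a sphere or plane in $\mathbb{E}^3$. Where $V=0$ on the boundary, the condition forces $\partial_\nu V=0$ as well, and since $V$ restricted to an umbilic sphere is the restriction of a linear function, this pins things down.

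Next, for each candidate umbilic surface I will verify the condition $\partial_\nu L=\kappa L$ on it.
\begin{itemize}
\item For a geodesic sphere $\{x_0=c\}\cap\mathbb{S}^3$, the potential $L=x_0$ works.
\item For a Euclidean sphere of radius $\rho$ centred at $p$, the condition $\partial_\nu V=V/\rho$ (or $-V/\rho$ for the inner side) is satisfied by $V=1+\langle x-p,\cdot\rangle$-type combinations. Working it out forces $V$ to be linear centred at $p$; the constant fails for spheres but works for planes.
\end{itemize}
When there are two boundary components, they must be compatible with a single $V$. In $\mathbb{S}^3$, $\partial_\nu x_0=\kappa x_0$ holds on every sphere $\{x_0=c\}$ (cut locus-concentric), which gives rings. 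In $\mathbb{E}^3$, concentric spheres with $V=|x-p|$-linear, or parallel planes with $V$ constant, give the shells and slabs. Non-concentric pairs are ruled out because the centre $p$ is determined by $V$. Connectedness then yields the listed shapes, and the converse direction is a direct check.

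For (iii), the boundary consists of slices $\{t_0\}\times\mathbb{S}^2$, because $V$ depends only on $t$. Umbilicity together with the Gauss equation forces the boundary to be a constant-curvature totally geodesic sphere, i.e.\ a slice; I expect this to be the main obstacle, and I would first try to show that an umbilic surface in $\mathbb{R}\times\mathbb{S}^2$ on which $V$ satisfies the Robin condition is a slice. Each slice is totally geodesic, so $B=0$ and the condition becomes $V'(t_0)=0$. An interval endpoint therefore forces $V=A\cos(t-t_0)$, which gives one dimension. With two endpoints, both conditions hold, which requires $t_1-t_0\in\pi\mathbb{Z}$, and $\dim\mathcal P$ is still $1$; with no endpoints, $\Omega$ has no boundary, so this case is excluded for a manifold with boundary.
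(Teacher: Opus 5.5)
The step that fails is your compatibility argument for several boundary components. It cannot be repaired, because the ``only if'' directions of (i) and (ii) are false as stated.

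Your single-sphere computation in $\mathbb{E}^3$ is right. On $S_\rho(p)$ the condition $\partial_\nu V\, g_{\partial M}=B_{\partial M}V$ for $V=a_0+a\cdot x$ reduces to $a_0=-a\cdot p$, i.e.\ $V=a\cdot(x-p)$. But this says only that $V(p)=0$. So $V$ does not determine the centre; it only forces the centre onto the plane $V^{-1}(0)$. On a plane with normal $n$ the condition is $a\cdot n=0$, and neither condition depends on the side chosen, since $\partial_\nu V$ and $B_{\partial M}$ both change sign with $\nu$. Hence spheres with coplanar centres, together with planes parallel to $\nabla V$, are simultaneously compatible. For example, $\bar B_3(0)\setminus B_1(e_1)$ with $V=x_2$ is a compact non-concentric shell. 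Further static domains missing from the list are $\mathbb{E}^3$ minus two disjoint balls, and $\{x_3\geqslant 0\}\setminus B_1(2e_3)$ with $V=x_1$. All of these have $\dim\mathcal P=2$.

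Your $\mathbb{S}^3$ check is also wrong. For the cap $\{x\cdot e\geqslant\cos\rho\}$ one has $B_{\partial M}=\cot\rho\, g_{\partial M}$ and $\partial_\nu(a\cdot x)=\bigl(\cos\rho\,(a\cdot x)-a\cdot e\bigr)/\sin\rho$. So the Robin condition holds iff $a\cdot e=0$. Thus $x_0$ never works on $\{x_0=c\}$: it would need $-\sin\rho=\cot\rho\cos\rho$. The admissible potentials are $x_1,x_2,x_3$, which vanish at the centre. Consequently $\mathbb{S}^3$ minus any number of disjoint geodesic balls centred on one great sphere $\{a\cdot x=0\}$ is static.

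Carried out correctly, your method (umbilicity, Codazzi, then solving the Robin condition componentwise) proves a different classification. The boundary components are spheres or planes, and $\Omega$ is static iff some nonzero ambient potential vanishes at all the centres and has gradient tangent to all planar components.

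In (iii), the claim that the boundary must be a slice also fails, and the Gauss equation is not what decides it. Since $H_{\partial M}$ is constant, Codazzi for the umbilic boundary gives $\Ric(\nu,Z)=0$ for all $Z$ tangent to $\partial\Omega$. In $\mathbb{R}\times\mathbb{S}^2$ this forces $\nu$ to be vertical or horizontal, pointwise and hence on each whole component. The horizontal alternative is a totally geodesic cylinder $\mathbb{R}\times\gamma$ over a great circle. There $B_{\partial M}=0$ and $\partial_\nu V=0$ for every $V=a\cos t+b\sin t$. So $\mathbb{R}\times(\text{closed hemisphere})$, and its quotient by $t\mapsto t+2\pi$, are static with $\dim\mathcal P=2$.

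Your correct observation that two slices force $t_1-t_0\in\pi\mathbb{Z}$ also contradicts the ``if'' direction of (iii) as worded: $[0,1]\times\mathbb{S}^2$ admits no static potential. You should flag this rather than absorb it.

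For comparison, the paper gets its list by quoting Sheng's classification (intersections of caps; boundary components planes or spheres). It then argues concentricity from the orthogonality of zero sets to $\partial\Omega$. That argument has the same defect. Producing a potential of one cap whose zero set is not orthogonal to the other component shows only that not every potential of the cap survives, not that none does. The paper's treatment of (iii) also overlooks the case where $\nu$ is horizontal. So the missing ingredient is not a technique but a corrected statement.
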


\begin{proof}
$(i)$ By Theorem 4.3 in~\cite{sheng2025static}, any static manifold with boundary on the sphere is an intersection of spherical caps. In the case where $\Omega$ is the intersection of two spherical caps, we show the boundary spheres must be concentric. Proposition 1(a.2) in~\cite{cruz2023static} implies that the zero set of any static potential intersects the boundary orthogonally. Choose coordinates on $\mathbb{S}^3\subset\mathbb{E}^4$ so that one cap is centered at the north pole; then the space of static potentials is $\mathcal{P}=\operatorname{span}\{x_1,x_2,x_3\}$. If the two boundary components were not concentric, one could construct a potential whose zero set fails to be orthogonal to one of the boundary components, contradicting the orthogonality condition. Hence the spheres are concentric. This argument also implies that no third boundary component for $\Omega$ is possible.

$(ii)$ Theorem 4.7 in~\cite{sheng2025static} states that the boundary components of a static manifold $\Omega\subset\mathbb{R}^3$ are either planes or round spheres. We consider two cases.

\medskip

\emph{Case 1: A plane boundary component.} Suppose $x_3=c$ is a boundary component. From the discussion preceding Theorem 4.7 in~\cite{sheng2025static}, the space of static potentials for the half-space $\{x_3>c\}$ is spanned by $1,x_1,x_2$. Any non-trivial potential has zero set (if non-empty) orthogonal to the boundary. If $\Omega$ has another boundary component, it must be either a plane or a sphere.  
\begin{itemize}
    \item If it is a plane, it must be parallel to $x_3=c$; otherwise the planes would intersect, giving a non-smooth boundary and violating orthogonality. Thus we obtain a slab between two parallel planes.
    \item If it were a sphere, one can always construct a static potential whose zero set does not intersect the sphere orthogonally, which is impossible.
\end{itemize}
Hence, when a plane component exists, $\Omega$ is necessarily a slab. Similar reasoning implies that no third boundary component for $\Omega$ is possible.

\medskip

\emph{Case 2: A spherical boundary component.} If a round sphere is a boundary component, a similar argument using Proposition 4.2 in~\cite{ho2020deformation} shows that $\Omega$ must be a ball, the complement of a ball, or a spherical shell.

$(iii)$ The first equation of~\eqref{Robin} implies that any static potential must be of the form $C_1 \sin r + C_2 \cos r$, where $r \in \mathbb{R}$ and $C_1, C_2$ are constants (see e.g. Example 1 in~\cite{kobayashi1982differential}). Let $\Omega$ be a domain in $\mathbb{R} \times \mathbb{S}^2$. The Robin boundary condition implies that $\dim \mathcal{P} = 1$. Indeed, let $\nu$ denote the outward unit normal to $\partial \Omega$. It has the form $\nu = \phi(x,r) \partial_r + X$, where $x \in \mathbb{S}^2$ and $X$ is a vector field orthogonal to $\partial_r$ at each point $(x,r) \in \partial \Omega$. In particular, $X$ has no component in the $\partial_r$ direction. If $\Omega$ is a static manifold with boundary, then the second equation of~\eqref{Robin} must hold on $\partial \Omega$, namely
$$
\phi(x,r)(C_1 \cos r - C_2 \sin r) = \frac{H_{\partial \Omega}}{2} \bigl(C_1 \sin r + C_2 \cos r\bigr),
$$
where $H_{\partial \Omega} = \mathrm{const}$. This equality forces $\phi(x,r) = \phi(r)$, i.e., $\phi$ is independent of $x$. Solving the above equation yields $r = r_0 = \mathrm{const}$, which describes $\partial \Omega$. In other words, each connected component of $\partial \Omega$ is of the form $\{r_0\} \times \mathbb{S}^2$. Furthermore, in this setting $H_{\partial \Omega}=0$ and $r_0$ satisfies
$$
C_1 \cos r_0 - C_2 \sin r_0 = 0.
$$
This implies $\dim\mathcal P=1$.
\end{proof}

We use this lemma to analyze the relative position of the zero sets of linearly independent static potentials. Our first result in this direction is as follows.

\begin{proposition}\label{propstat}
Suppose that $(M^3,g)$ is a compact static manifold with boundary and $\dim\mathcal P\geqslant 2$. Let $f_1,f_2\in \mathcal P$ be linearly independent and $f^{-1}_1(0)\cap\partial M\neq\varnothing$. Then $f^{-1}_2(0)\cap \partial M\neq \varnothing$.
\end{proposition}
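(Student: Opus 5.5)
We argue by contradiction: assume $f_2^{-1}(0)\cap\partial M=\varnothing$, so $f_2$ has a fixed sign on each boundary component. There are then two cases.

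\emph{Case A: $f_2^{-1}(0)=\varnothing$.} After a sign change, $f_2>0$ on all of $M$, and we work with $Z=f_1/f_2$. By Lemma~\ref{lem3} (its proof is local, so it applies in the interior), either $Z$ is constant, which would contradict linear independence, or $\nabla^g Z\neq 0$ everywhere. In the latter case the level set $\Sigma=Z^{-1}(0)=f_1^{-1}(0)$ is totally geodesic, and $M$ is locally a warped product $f_2^2dt^2+g_0$.

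Restricting to $\Sigma$, Lemma~\ref{lem4} gives $\Delta_\Sigma f_2=(K-\tfrac{R_g}{2})f_2$ with $f_2>0$. Moreover $\Sigma$ meets $\partial M$ orthogonally (Proposition~1 in~\cite{cruz2023static}), so $\Sigma$ is a compact surface with boundary. On that boundary the Robin condition for $f_2$ in~\eqref{b-static} gives $\partial_\eta f_2=B_{\partial M}(\eta,\eta)f_2$, where $\eta$ is the conormal of $\partial\Sigma$, and this is a Robin condition for $f_2|_\Sigma$. I will then invoke the boundary analogue of Proposition~24 of~\cite{ambrozio2017static}: integrating $\Delta_\Sigma f_2/f_2$ against test functions, or equivalently using the Bochner/Reilly-type identity for the equation $\Hess_\Sigma f_2=\tfrac12(K-\tfrac{R_g}{2})f_2\,g_\Sigma$, forces $K\equiv R_g/2$. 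Then Lemma~\ref{lem1}(iv) and Lemma~\ref{lem5}/Lemma~\ref{lem6} show that $g$ is nice near $\Sigma$, and by analyticity (as in the proof of Theorem~\ref{T3}) it is nice globally. Consequently $M$ is locally either $\mathbb{R}\times\mathbb{S}^2$, $\mathbb{R}\times\mathbb{H}^2$ or flat, with $\nabla f_2$ tangent to the flat direction.

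\emph{Case B: $f_2$ vanishes in the interior only.} Then $\Sigma_2=f_2^{-1}(0)$ is a closed totally geodesic surface inside $\operatorname{int}M$. We apply Lemma~\ref{lem4} to the pair $(f_1,f_2)$ on $\Sigma_2$. If $f_1$ vanishes somewhere on $\Sigma_2$, we use Proposition~\ref{prop3} and the argument of Theorem~\ref{T1} to conclude that $g$ is a space form. If $f_1$ does not vanish on $\Sigma_2$, we use Proposition~\ref{prop4} to conclude that $g$ is nice, and in the flat case flat. In either case $M$ is (locally, and hence by analyticity and compactness globally up to covering) a domain in $\mathbb{S}^3$, $\mathbb{E}^3$, $\mathbb{H}^3$ or $\mathbb{R}\times\mathbb{S}^2$.

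Finally we use Lemma~\ref{lemomega}. In $\mathbb{R}\times\mathbb{S}^2$ it gives $\dim\mathcal P=1$, a contradiction. In $\mathbb{S}^3$ and $\mathbb{E}^3$ the domain is a cap, ring, ball, slab or shell, and we check directly that the static potentials satisfying the Robin condition are spanned by functions whose zero sets are planes through the centre, respectively coordinate planes orthogonal to the boundary. If $\dim\mathcal P\geq2$, every nontrivial such potential has a zero set hitting $\partial M$, which contradicts the assumption on $f_2$. For the ring and shell, the only potentials are the linear functions vanishing on central planes, and these all hit the boundary. For a slab, compactness excludes it. The hyperbolic case is handled analogously via the hyperboloid model.

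The main obstacle is the boundary version of the Ambrozio positivity argument in Case A. There $\Sigma$ has boundary, and one must verify that the Robin data $B_{\partial M}(\eta,\eta)=H_{\partial M}/2$ (coming from the umbilicity implied by~\eqref{b-static}) enters with the right sign. I would try to handle this by integrating $\operatorname{div}_\Sigma(\nabla f_2/f_2)$ and using the Gauss--Bonnet theorem with geodesic curvature of $\partial\Sigma$ equal to $B_{\partial M}(\eta',\eta')$.
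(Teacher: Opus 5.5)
Your proposal has two genuine gaps.

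\textbf{Case A.} This case rests on a ``boundary analogue'' of Proposition~24 of \cite{ambrozio2017static} that you do not prove; you flag it yourself as the main obstacle. Even granting it, you only obtain niceness, and no contradiction is then derived. The locally $\mathbb{R}\times\mathbb{H}^2$ possibility is never addressed. Also, ``for a slab, compactness excludes it'' is false: compactness of $M$ says nothing about its universal cover. For instance, $[0,1]\times\mathbb{T}^2$ is compact and covered by a slab, on which the constant $1$ is a zero-free static potential.

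The missing idea is spectral, and it reduces this case to two lines, which is how the paper handles it. Both $f_1$ and $f_2$ solve the same self-adjoint Robin eigenvalue problem~\eqref{Robin}, with the same constants $R_g$ and $H_{\partial M}$. A zero-free eigenfunction cannot be $L^2$-orthogonal to the positive first eigenfunction, so $f_2$ is a first eigenfunction. Hence $f_1$ is one too. The first eigenspace is one-dimensional, so $f_1$ has no zeros, contradicting $f_1^{-1}(0)\neq\varnothing$.

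\textbf{Case B.} Here you never exclude $R_g<0$. Lemma~\ref{lemomega} only covers $\mathbb{S}^3$, $\mathbb{E}^3$ and $\mathbb{R}\times\mathbb{S}^2$. In Case B you omit the nice geometry with $R_g<0$ altogether. The remark that ``the hyperbolic case is handled analogously'' is not an argument, since no classification of static domains in $\mathbb{H}^3$ is available. Notice also that Case B never uses the hypothesis $f_1^{-1}(0)\cap\partial M\neq\varnothing$; that hypothesis is exactly what the paper uses to rule these cases out.

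\emph{Disjoint (nice) case.} Let $P_1$ be a component of $f_1^{-1}(0)$ meeting $\partial M$. It is totally geodesic and meets $\partial M$ orthogonally. By Lemma~\ref{lem4} and $K_1=R_g/2$, the restriction $f_2|_{P_1}$ is harmonic, has a fixed sign, and satisfies $\partial_\nu f_2=\tfrac{H_{\partial M}}{2}f_2$ on $\partial P_1$. Integrating $\Delta_{P_1}f_2$ over $P_1$ gives $H_{\partial M}=0$. So every static potential is a Neumann eigenfunction with eigenvalue $R_g/2$, whence $R_g\geqslant 0$ (the paper invokes \cite{cruz2019prescribing} for this step).

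\emph{Intersecting case.} Here $(M,g)$ is a space form with $R_g=6\epsilon$. Restricting $f_1$ to a closed component $P_2$ of $f_2^{-1}(0)$, Lemma~\ref{lem4} gives $\Delta_{P_2}f_1=-2\epsilon f_1$. For $\epsilon=-1$ this forces $f_1\equiv 0$ on $P_2$. That is impossible, because $\nabla f_1$ and $\nabla f_2$ are linearly independent wherever both potentials vanish.

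Once $R_g\geqslant 0$ is secured, your appeal to Lemma~\ref{lemomega} goes through. You should, however, replace the compactness remark by the observation that nonconstant potentials on a half-space or slab vanish on planes orthogonal to the boundary.
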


\begin{proof}
Since both $f_1$ and $f_2$ satisfy the Robin boundary condition~\eqref{Robin}, they belong to the same eigenspace of the associated eigenvalue problem. Suppose, for contradiction, that $f_2$ does not vanish anywhere in $M$. Then $f_2$ is a first eigenfunction, and $f_1$ must also be a first eigenfunction. However, a first eigenfunction cannot vanish in $M$, contradicting the assumption that $f_1$ has a nonempty zero set. Therefore, the zero set of $f_2$ is nonempty.

Assume further that $f_2^{-1}(0) \cap \partial M = \varnothing$. Then, by Proposition~1 (a.1) in~\cite{cruz2023static}, every connected component of $f_2^{-1}(0)$ is a closed, totally geodesic surface in the interior of $M$. Now suppose that the zero sets of $f_1$ and $f_2$ are disjoint. In this case, the argument used in the proof of Theorem~\ref{T3} applies, and we conclude that the Riemannian manifold $(M,g)$ is nice -- that is, its Ricci tensor has eigenvalues $\left(\dfrac{R_g}{2}, \dfrac{R_g}{2}, 0\right)$.

Let $P_1$ be a connected component of $f_1^{-1}(0)$ that intersects the boundary $\partial M$. Then, by Proposition~1 (a.2) in~\cite{cruz2023static}, $P_1$ is a free boundary totally geodesic surface. Consider the restriction of $f_2$ to $P_1$. By Lemma~\ref{lem4} and the free boundary condition, this restriction satisfies the following boundary value problem:
$$
\begin{cases}
\Delta_{P_1} f_2 = \Bigl(K_1 - \dfrac{R_g}{2}\Bigr) f_2 & \text{in } P_1, \\
\dfrac{\partial f_2}{\partial \nu} = \dfrac{H_{\partial M}}{2} f_2 & \text{on } \partial P_1,
\end{cases}
$$
where $K_1$ denotes the Gauss curvature of $P_1$, $\nu$ is the outward unit conormal to $\partial P_1$. However, since $(M,g)$ is nice, Lemma~\ref{lem1} implies that the Gauss curvature of any connected component of $f_1^{-1}(0)$ equals $\dfrac{R_g}{2}$. Consequently, the system simplifies to
$$
\begin{cases}
\Delta_{P_1} f_2 = 0 & \text{in } P_1, \\
\dfrac{\partial f_2}{\partial \nu} = \dfrac{H_{\partial M}}{2} f_2 & \text{on } \partial P_1.
\end{cases}
$$

Since $f_2$ does not vanish on $P_1$ (by the disjointness assumption), its restriction to $P_1$ is a nontrivial harmonic function with constant sign. By Hopf's lemma, we must have $H_{\partial M} = 0 \quad \text{and} \quad f_2 \equiv \text{const on } P_1$. (This also follows from the observation that the restriction of $f_2$ to $P_1$ is a first Steklov eigenfunction.)

Now, by the result of~\cite{cruz2019prescribing} if $H_{\partial M} = 0 $, the scalar curvature must satisfy $R_g \geqslant 0$. Since $(M,g)$ is nice and $R_g \geqslant 0$, the Cheeger-Gromol splitting theorem implies that the universal Riemannian cover of $(M,g)$ is isometric to a domain $\tilde{M}$ in either $\mathbb{R} \times \mathbb{E}^2$ with the product metric (this is, clearly, isometric to $\mathbb E^3$), if $R_g = 0$, or $\mathbb{R} \times \mathbb{S}^2$ with the product metric, if $R_g > 0$. In both cases, the boundary $\partial M$ is totally geodesic (as $H_{\partial M} = 0 $), so $\partial \tilde{M}$ is also totally geodesic. However, Lemma~\ref{lemomega} shows that such a configuration cannot occur: in the case of $\mathbb{R} \times \mathbb{E}^2$ ($\tilde{M}$  is a half-space or a slab between two parallel planes), the zero set of any non-constant static potential intersects the boundary; in the case of $\mathbb{R} \times \mathbb{S}^2$, the zero set of any static potential does not intersect the boundary.

We now assume that $f_2^{-1}(0) \cap \partial M = \varnothing$, and that there exist a closed connected component $P_2 \subset f_2^{-1}(0)$ and a connected component with boundary $P_1 \subset f_1^{-1}(0)$ such that $P_1 \cap P_2 \neq \varnothing$. Since $P_2$ is disjoint from $\partial M$, the intersection $P_1 \cap P_2$ lies entirely in the interior of $M$. In particular, the restriction of $f_2$ to $P_1$ changes sign on $P_1$ but does not vanish on $\partial P_1$.

As in the proof of Theorem~\ref{T1}, this situation implies that $(M,g)$ is a space form, and both $P_1$ and $P_2$ have Gauss curvature equal to $\dfrac{R_g}{6}$. We rescale the metric so that $R_g = 6\epsilon$, where $\epsilon \in \{-1,0,1\}$.

By Lemma~\ref{lem4}, the restriction of $f_2$ to $P_1$ satisfies
$$
\begin{cases}
\mathrm{Hess}_{P_1} f_2 = -\epsilon f_2 \, g_{P_1} & \text{in } P_1, \\
\dfrac{\partial f_2}{\partial \nu} = \dfrac{H_{\partial M}}{2} f_2 & \text{on } \partial P_1,
\end{cases}
$$
and the restriction of $f_1$ to $P_2$ satisfies
$$
\mathrm{Hess}_{P_2} f_1 = -\epsilon f_1 \, g_{P_2}.
$$
Taking the trace of the latter equation yields
$$
\Delta_{P_2} f_1 = -2\epsilon f_1.
$$

If $\epsilon = -1$, then $f_1$ satisfies $\Delta_{P_2} f_1 = 2 f_1$ on the closed surface $P_2$. The only solution is $f_1 \equiv 0$ on $P_2$, which contradicts the fact that the zero sets of distinct eigenfunctions intersect transversally (and hence $f_1$ cannot vanish identically on a component of $f_2^{-1}(0)$). Therefore, $\epsilon \in \{0,1\}$.

Suppose first that $\epsilon = 0$. Consequently, the restriction of $f_2$ to $P_1$ satisfies 
$$
\begin{cases}
\Delta_{P_1} f_2 = 0 & \text{in } P_1, \\
\dfrac{\partial f_2}{\partial \nu} = \dfrac{H_{\partial M}}{2} f_2 & \text{on } \partial P_1.
\end{cases}
$$
Since $f_2$ does not vanish on $\partial P_1$, Hopf's lemma implies that $H_{\partial M} = 0$, and hence $f_2$ is constant on $P_1$. This contradicts the fact that $f_2$ changes sign on $P_1$. Hence, the case $\epsilon = 0$ is impossible.

We conclude that $\epsilon = 1$. Let $(\tilde{M}, \tilde{g})$ be the universal Riemannian cover of $(M,g)$. Since $(M,g)$ is a space form with positive scalar curvature, the cover is isometric to a domain in the standard sphere $\mathbb{S}^3$, bounded by round spheres (the boundary components of a static manifold with boundary are umbilic). The closed surface $P_2 \subset M$ lifts to a closed totally geodesic surface in $\tilde{M}$. This is a connected component of the zero set of the lift of $f_2$ which is a static potential on $(\tilde{M}, \tilde{g})$. But by Lemma~\ref{lemomega}, $(\tilde{M}, \tilde{g})$ is either spherical cap or a spherical ring. The zero set of any static potential in both cases is connected and intersects the boundary, i.e. there are no closed connected components. This is a contradiction.

Finally, consider the possibility that both $P_1\subset f^{-1}_1(0)$ and $P_2\subset f^{-1}_2(0)$ are closed and intersect and $f^{-1}_2(0)$ does not intersect any free boundary component of $f^{-1}_1(0)$. Then the same argument applies: $(M,g)$ is a space form with $R_g = 6\epsilon$, $\epsilon \in \{0,1\}$, and the above analysis shows both cases lead to contradictions. Indeed, if $\epsilon = 0$, we again obtain a nonvanishing solution to
$$
\begin{cases}
\mathrm{Hess}_{P} f_2 = 0 & \text{in } P, \\
\dfrac{\partial f_2}{\partial \nu} = \dfrac{H_{\partial M}}{2} f_2 & \text{on } \partial P,
\end{cases}
$$
for some component $P \subset f_1^{-1}(0)$ with boundary, which forces $H_{\partial M} = 0$ . Thus, $\partial M$ is totally geodesic and $(M,g)$ is flat (as $R_g=6\epsilon=0$). It follows that $(M,g)$ admits a Riemannian covering by a domain $\tilde{M} \subset \mathbb{E}^3$. Each boundary component of $M$ lifts to a totally geodesic surface in $\mathbb{E}^3$, i.e., a Euclidean plane. By Lemma~\ref{lemomega}, $\tilde{M}$ is either a half-space (one boundary component) or a slab between two parallel planes (two boundary components). In either case, the lift of $f_2^{-1}(0)$ is a totally geodesic surface which does not intersect the boundary. However, the zero set of any static potential in these domains is connected and intersects the boundary. We arrive to a contradiction.

 If $\epsilon = 1$, the universal cover is a domain in $\mathbb{S}^3$, and Lemma~\ref{lemomega} again rules out the existence of the required closed surfaces (no static potential has zero set intersecting the boundary).

Therefore, every connected component of $f_2^{-1}(0)$ must intersect $\partial M$.
\end{proof}

As a byproduct of this proof, we obtain the following corollary.

\begin{corollary}\label{remff}
Suppose that $(M^3,g)$ is a compact static manifold with boundary and $\dim\mathcal P\geqslant 2$. Let $f_1,f_2\in \mathcal P$ be linearly independent. If one connected component of $f^{-1}_1(0)$ intersects the boundary, then all connected components of $f^{-1}_1(0)$ and $f^{-1}_2(0)$ also intersect the boundary.
\end{corollary}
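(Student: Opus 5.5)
The corollary will be extracted from the proof of Proposition~\ref{propstat}, run once for each pair of zero sets. Let $P_1$ be a component of $f_1^{-1}(0)$ that meets $\partial M$. By Proposition~1(a.2) in~\cite{cruz2023static}, $P_1$ is a free boundary totally geodesic surface. Proposition~\ref{propstat} already gives $f_2^{-1}(0)\cap\partial M\neq\varnothing$. We must still show two things: every component of $f_2^{-1}(0)$ meets $\partial M$, and so does every component of $f_1^{-1}(0)$.

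\textbf{Step 1: closed components of $f_2^{-1}(0)$.} Suppose $P_2\subset f_2^{-1}(0)$ is a closed component, so $P_2\cap\partial M=\varnothing$ by Proposition~1(a.1) in~\cite{cruz2023static}. We split into two cases.

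\emph{Case (a): $P_2$ meets some component of $f_1^{-1}(0)$.} The arguments of Proposition~\ref{prop3} and Theorem~\ref{T1} are local near the intersection curve and use analyticity off it. They force $(M,g)$ to be a space form with $R_g=6\epsilon$.
\begin{itemize}
\item If $\epsilon=-1$, the equation $\Delta_{P_2}f_1=2f_1$ on the closed surface $P_2$ gives $f_1\equiv0$ on $P_2$, which is impossible.
\item If $\epsilon=0$, restrict $f_2$ to the free boundary component $P_1$. Then $f_2|_{P_1}$ satisfies $\mathrm{Hess}_{P_1}f_2=0$ with the Robin condition, so Hopf's lemma gives $H_{\partial M}=0$. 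Lemma~\ref{lemomega}(ii) then leaves only a half-space or a slab in the cover, and there zero sets have no closed components.
\item If $\epsilon=1$, Lemma~\ref{lemomega}(i) excludes closed components in caps and rings.
\end{itemize}

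\emph{Case (b): $P_2$ meets no component of $f_1^{-1}(0)$.} Here $f_1$ does not vanish on $P_2$. Applying Proposition~\ref{prop4} with the roles swapped near $P_2$, $(M,g)$ is nice near $P_2$.
\begin{itemize}
\item If $f_2^{-1}(0)\cap f_1^{-1}(0)=\varnothing$ globally, analytic continuation makes $(M,g)$ nice everywhere. The harmonic restriction of $f_2$ to $P_1$ then gives $H_{\partial M}=0$, and $R_g\ge0$ by~\cite{cruz2019prescribing}. The Cheeger--Gromoll cover together with Lemma~\ref{lemomega}(ii),(iii) yields a contradiction, exactly as in the proposition.
\item If instead some other pair of components intersects, that pair forces a space form by Case (a)'s argument, and we are back to Case (a).
\end{itemize}

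\textbf{Step 2: components of $f_1^{-1}(0)$.} Step 1 shows every component of $f_2^{-1}(0)$ meets $\partial M$, and the zero set is nonempty. Now apply Step 1 with the roles of $f_1$ and $f_2$ exchanged, taking a free boundary component of $f_2^{-1}(0)$ as the new ``$P_1$''. This shows every component of $f_1^{-1}(0)$ meets $\partial M$.

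\textbf{Main obstacle.} The delicate point is the mixed situation in Step 1(b): some closed component is disjoint from the other zero set while other components intersect. The global niceness and space-form conclusions rely on real-analyticity and connectedness of $M$ minus the intersection curves, so a local conclusion must be propagated to a global one. The plan is to observe that being Einstein, and being nice, are both analytic closed conditions on the connected set where some potential is nonzero. A space form cannot also be nice unless it is flat, so the two local conclusions are compatible only in the flat case. That case is then disposed of by the $\epsilon=0$ argument above.
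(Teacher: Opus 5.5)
Your proposal follows the paper's route. The corollary comes from rerunning the proof of Proposition~\ref{propstat} for an arbitrary closed component: intersecting zero sets force a space form, which the $\epsilon$-analysis and Lemma~\ref{lemomega} then exclude, while globally disjoint zero sets force niceness, $H_{\partial M}=0$, $R_g\geqslant 0$ and a contradiction via the covering; one then swaps $f_1$ and $f_2$, which is exactly how the paper intends this ``byproduct.'' One small repair is needed. In Step~1(a) with $\epsilon=0$ you cannot invoke Hopf's lemma, because in the corollary's setting $f_2$ may vanish on $\partial P_1$: in a Euclidean ball, $f_1=x_1$ and $f_2=x_2$ give $\mathrm{Hess}_{P_1}f_2=0$ together with the Robin condition, yet $H_{\partial M}\neq 0$. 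The step is superfluous, though, since in every domain listed in Lemma~\ref{lemomega}(ii) each zero-set component of a static potential meets the boundary.
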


In our subsequent analysis, we will primarily rely on this corollary. However, Proposition~\ref{propstat} may be of independent interest.

Our aim now is to prove Theorem~\ref{thmbstat}. To this end, we first prove the following lemma. 

\begin{lemma}\label{propprep}
Suppose that $(M^3,g)$ is a compact static manifold with boundary and $\dim\mathcal P\geqslant 2$. Let $f_1,f_2\in \mathcal P$ be linearly independent and $f^{-1}_1(0)\cap\partial M=\varnothing$. Then, $f^{-1}_1(0)\cap f^{-1}_2(0)= \varnothing$ and $(M,g)$ is nice.
\end{lemma}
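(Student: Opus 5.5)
The plan has two parts. First I show that the zero sets are disjoint; then I show that $(M,g)$ is nice by rerunning the argument of Proposition~\ref{prop4} and Theorem~\ref{T3} in the interior.

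\textbf{Step 1: $f_1^{-1}(0)$ is a nonempty union of closed interior surfaces.} Since $f_1$ and $f_2$ lie in the same Robin eigenspace~\eqref{Robin}, the first argument of Proposition~\ref{propstat} applies. If $f_1$ vanished nowhere, it would be a first eigenfunction, and so would $f_2$, which is impossible. So $f_1^{-1}(0)\neq\varnothing$. By hypothesis and Proposition~1 (a.1) in~\cite{cruz2023static}, every component of $f_1^{-1}(0)$ is a closed, totally geodesic surface in the interior of $M$. In particular no component of $f_1^{-1}(0)$ meets $\partial M$.

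\textbf{Step 2: the zero sets are disjoint.} Corollary~\ref{remff} gives the first reduction. If some component of $f_2^{-1}(0)$ met $\partial M$, then every component of $f_2^{-1}(0)$ and of $f_1^{-1}(0)$ would meet $\partial M$, contradicting Step 1. So $f_2^{-1}(0)$ is either empty or also consists of closed interior surfaces.

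Now suppose, for contradiction, that a component $P_1$ of $f_1^{-1}(0)$ meets a component $P_2$ of $f_2^{-1}(0)$. Then Proposition~\ref{prop3} and the analytic-continuation argument in the proof of Theorem~\ref{T1} apply, since they are purely interior. This continuation uses that $M\setminus(f_1^{-1}(0)\cap f_2^{-1}(0))$ is connected and that the metric is analytic away from that curve. The conclusion is that $(M,g)$ is a space form; rescale so that $R_g=6\epsilon$.

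Lemma~\ref{lem4} gives $\Delta_{P_2} f_1=-2\epsilon f_1$ on the closed surface $P_2$ (and symmetrically for $f_2$ on $P_1$).
\begin{itemize}
\item If $\epsilon=-1$ or $\epsilon=0$: either the eigenvalue has the wrong sign or $f_1|_{P_2}$ is harmonic. In both cases the maximum principle forces $f_1|_{P_2}$ to be zero or constant. Zero is impossible, because distinct potentials have transversal zero sets. A nonzero constant contradicts $P_1\cap P_2\neq\varnothing$.
\item If $\epsilon=1$: the universal cover is a domain in $\mathbb{S}^3$. By Lemma~\ref{lemomega}(i) it is a cap or a ring, and in those domains no static potential has a closed zero component. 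The lift of $P_1$ would be such a component, a contradiction.
\end{itemize}
Hence $f_1^{-1}(0)\cap f_2^{-1}(0)=\varnothing$.

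\textbf{Step 3: niceness.} Since $f_2$ does not vanish on the closed surfaces making up $f_1^{-1}(0)$, Proposition~\ref{prop4} applies verbatim in the interior, using Proposition~24 of~\cite{ambrozio2017static} on the closed components. So $(M,g)$ is nice in a neighborhood of $f_1^{-1}(0)$.

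At every interior point at least one of $f_1,f_2$ is nonzero, so $g$ is real-analytic there. The same propagation as in the proof of Theorem~\ref{T3} then spreads niceness over all of $\operatorname{int} M$, using connectedness and the fact that niceness is an analytic condition on the Ricci eigenvalues. By continuity niceness extends up to $\partial M$.

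\textbf{Main obstacle.} I expect the main obstacle to be the propagation step, which needs care in two places. Near the boundary, the warped-product charts of Lemma~\ref{lem3} must stay inside $M$; this is handled by working in the interior and extending by continuity. Away from $f_1^{-1}(0)$, niceness has to propagate through the region where only $f_2$ is nonzero. Here I would argue by analyticity: the nice condition holds on an open set and the metric is analytic on a connected open dense set.
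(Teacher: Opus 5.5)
Your proof is correct and follows essentially the paper's route. Assuming the zero sets meet, the argument of Theorem~\ref{T1} makes $(M,g)$ a space form with $R_g=6\epsilon$; then $\epsilon=-1$ is ruled out on the closed surface $P_1$, $\epsilon=1$ is excluded by Lemma~\ref{lemomega}(i) on the universal cover, and niceness follows as in Theorem~\ref{T3}.

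The differences are minor and in your favour. You dispose of $\epsilon=0$ directly: the harmonic function $f_2|_{P_1}$ on the closed surface $P_1$ vanishes on $P_1\cap P_2$, hence identically, which contradicts transversality. The paper instead passes to a Euclidean covering domain and uses Lemma~\ref{lemomega}(ii). You also make explicit two points the paper leaves implicit: the nonemptiness of $f_1^{-1}(0)$, whose "impossible" step rests on the simplicity of the first Robin eigenvalue, and the analytic propagation of niceness. Finally, your appeal to Corollary~\ref{remff} is unnecessary, since $P_1$ is closed by hypothesis and you can restrict $f_2$ to $P_1$ throughout.
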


\begin{proof}
Assume, for contradiction, that the zero sets intersect. Thus, there exist connected components $P_1 \subset f_1^{-1}(0)$ and $P_2 \subset f_2^{-1}(0)$ such that $P_1 \cap P_2 \neq \varnothing$. Applying the argument from the proof of Theorem~\ref{T1} to this intersecting pair of level sets, we conclude that $(M,g)$ is a space form with scalar curvature $R_g = 6\epsilon,~\epsilon \in \{-1,0,1\}$, after a suitable rescaling of the metric. 

By Proposition 1 (a.1) in~\cite{cruz2023static}, $P_1$ is closed. Restricting $f_2$ to $P_1$, we recall from the previous analysis that the induced Laplacian satisfies
$$
\Delta_{P_1} f_2 = -2\epsilon f_2.
$$
Hence $\epsilon \in \{0,1\}$.

Consider the universal cover $\widetilde{M}$ of $M$, equipped with the lifted metric $\tilde{g}$. Because $(M,g)$ has constant curvature with $R_g = 6\varepsilon$ and admits static potentials, $(\widetilde{M},\tilde{g})$ is isometric to a domain $\Omega$ in either Euclidean space $\mathbb{E}^3$ (if $\varepsilon = 0$) or the round sphere $\mathbb{S}^3$ (if $\epsilon = 1$). Moreover, $\Omega$ inherits the structure of static manifold with boundary, and the lifts $\tilde{f}_1, \tilde{f}_2$ are static potentials on $\Omega$. It is known that such domains $\Omega$ must be one of the domains in $(i)$ and $(ii)$ in Lemma~\ref{lemomega}, respectively. In all these cases, the zero set of any nontrivial static potential (if nonempty) necessarily intersects the boundary $\partial \Omega$. Consequently, the zero sets of $\tilde{f}_1$ and $\tilde{f}_2$ intersect $\partial \Omega$. Projecting back to $M$, this implies that $f_1^{-1}(0) \cap \partial M \neq \varnothing$ and $f_2^{-1}(0) \cap \partial M \neq \varnothing$, contradicting the assumptions.

This contradiction shows that our assumption was false. Therefore, $f_1^{-1}(0) \cap f_2^{-1}(0) = \varnothing$, as stated. As in the proof of Theorem~\ref{T3} this implies that $(M,g)$ is nice.
\end{proof}

We now ready to prove Theorem~\ref{thmbstat}.

\begin{proof}[Proof of Theorem~\ref{thmbstat}.]
Assume $f_1, f_2 \in \mathcal{P}$ are linearly independent and $f_1^{-1}(0) \cap \partial M = \varnothing$. By Lemma~\ref{propprep}, the zero sets are disjoint ($f_1^{-1}(0) \cap f_2^{-1}(0) = \varnothing$) and $(M,g)$ is nice. Moreover, Corollary~\ref{remff} implies that $f_2^{-1}(0) \cap \partial M = \varnothing$. According to Proposition 1(a.1) in~\cite{cruz2023static}, $f_1^{-1}(0)$ and $f_2^{-1}(0)$ are closed surfaces. Restricting $f_2$ to a connected component of $f_1^{-1}(0)$ yields $R_g \geqslant 0$, following the argument established in the preceding proofs.

Since $R_g \geqslant 0$, the nice manifold $(M,g)$ admits a Riemannian covering by a domain in $\mathbb{R} \times \mathbb{E}^2$ (if $R_g = 0$) or $\mathbb{R} \times \mathbb{S}^2$ (if $R_g > 0$). In the former case ($\mathbb{R} \times \mathbb{E}^2$), any domain $\Omega$ that is a static manifold with boundary must be of the type described in Lemma~\ref{lemomega}(ii). In all such instances, the zero set of any nontrivial static potential necessarily intersects $\partial \Omega$, which contradicts the assumption that $f_1^{-1}(0) \cap \partial M = \varnothing$. In the latter case ($\mathbb{R} \times \mathbb{S}^2$), Lemma~\ref{lemomega}(iii) implies that the space of static potentials is one-dimensional, contradicting the linear independence of $f_1$ and $f_2$.
\end{proof}

\section{Appendix A} \label{appC}

All complete flat 3-manifolds are quotients $\mathbb{R}^3/\Gamma$, where 
$\Gamma \subset \operatorname{Isom}(\mathbb{E}^3) = \mathbb{R}^3 \rtimes O(3)$ 
is a discrete, torsion-free subgroup. Below we list each manifold $N_i$, give explicit generators for $\Gamma$, and describe the topological type of the quotient $M = \mathbb{R}^3 / \Gamma$. This table is based on~\cite[Section 3.5]{wolf1967spaces}, \cite{conway2003describing}, and~\cite[Section II, the chapter by M.Demia\'nski]{sanchez2012early}.

\bigskip

\subsection*{Isometric classification of complete flat 3-manifolds \(\mathbb{R}^3/\Gamma\)}

\footnotesize
\renewcommand{\arraystretch}{1.2}
\begin{longtable}{|c|p{0.7\linewidth}|p{0.2\linewidth}|}
\hline
\textbf{Label} & \textbf{Generators \(\Gamma\) (with parameters)} & \textbf{Quotient \(M\)} \\
\hline
\endhead
\hline
\multicolumn{3}{r}{\small\textit{Continued on next page}} \\
\endfoot
\hline
\endlastfoot
\(N_1\) & -- & \(\mathbb{R}^3\) \\
\hline
\(N_2\) & \(t(x,y,z) = (x+L,\; y,\; z),\quad L>0\) & \(\mathbb{E}^2\times \mathbb S^1\) \\
\hline
\(N_3\) & \(\begin{aligned} t_1(x,y,z) &= (x,\; y+\alpha,\; z)\\ t_2(x,y,z) &= (x,\; y+\beta,\; z+\gamma) \end{aligned}\) with \(\alpha,\gamma>0,\ \beta\in\mathbb{R}\) & \(\mathbb{R}\times \mathbb T^2\) \\
\hline
\(N_4\) & \(\begin{aligned} t_1(x,y,z) &= (x+a,\; y,\; z)\\ t_2(x,y,z) &= (x+d,\; y+b,\; z)\\ t_3(x,y,z) &= (x+e,\; y+f,\; z+c) \end{aligned}\) with \(a,b,c>0,\ d,e,f\in\mathbb{R}\) & \(\mathbb T^3\) \\
\hline
\(N_5\) & \(g(x,y,z) = (x+L,\; -y,\; z),\quad L>0\) & open M\"obius strip \(\times \mathbb{R}\) \\
\hline
\(N_6\) & \(\begin{aligned} t_1(x,y,z) &= (x,\; y+\alpha,\; z)\\ t_2(x,y,z) &= (x,\; y+\beta,\; z+\gamma)\\ g(x,y,z) &= (x,\; y+\delta,\; -z) \end{aligned}\) with \(\alpha,\gamma>0,\ \beta,\delta\in\mathbb{R}\) & \(\mathbb{R}\times\) Klein bottle \\
\hline
\(N_7\) & \(\begin{aligned} t_1(x,y,z) &= (x+\alpha,\; y,\; z)\\ t_2(x,y,z) &= (x,\; y+\beta,\; z)\\ g(x,y,z) &= (-x,\; -y,\; z+\tfrac12) \end{aligned}\) with \(\alpha,\beta>0\) & Half-turn space \\
\hline
\(N_8\) & \(\begin{aligned} t_1(x,y,z) &= (x+\alpha,\; y,\; z)\\ t_2(x,y,z) &= (x,\; y+\beta,\; z)\\ t_3(x,y,z) &= (x,\; y,\; z+\gamma)\\ g(x,y,z) &= (-x,\; y,\; z+\tfrac{\gamma}{2}) \end{aligned}\) with \(\alpha,\beta,\gamma>0\) & First non-orientable compact \(\mathbb{Z}_2\) \\
\hline
\(N_9\) & \(\begin{aligned} t_1(x,y,z) &= (x+\alpha,\; y,\; z)\\ t_2(x,y,z) &= (x,\; y+\beta,\; z)\\ t_3(x,y,z) &= (x,\; y,\; z+\gamma)\\ g(x,y,z) &= (x,\; -y,\; z+\tfrac{\gamma}{2}) \end{aligned}\) with \(\alpha,\beta,\gamma>0\) & Second non-orientable compact \(\mathbb{Z}_2\) \\
\hline
\(N_{10}\) & \(\begin{aligned} t_1(x,y,z) &= (x+\alpha,\; y,\; z)\\ t_2(x,y,z) &= \bigl(x-\tfrac{\alpha}{2},\; y+\tfrac{\sqrt{3}}{2}\alpha,\; z\bigr)\\ g(x,y,z) &= \bigl(x\cos120^\circ - y\sin120^\circ,\; x\sin120^\circ + y\cos120^\circ,\; z+\tfrac{\gamma}{3}\bigr) \end{aligned}\) with \(\alpha,\gamma>0\) & Third-turn space \\
\hline
\(N_{11}\) & \(\begin{aligned} t_1(x,y,z) &= (x+\alpha,\; y,\; z)\\ t_2(x,y,z) &= (x,\; y+\alpha,\; z)\\ g(x,y,z) &= (-y,\; x,\; z+\tfrac{\gamma}{4}) \end{aligned}\) with \(\alpha,\gamma>0\) & Quarter-turn space \\
\hline
\(N_{12}\) & \(\begin{aligned} t_1(x,y,z) &= (x+\alpha,\; y,\; z)\\ t_2(x,y,z) &= \bigl(x-\tfrac{\alpha}{2},\; y+\tfrac{\sqrt{3}}{2}\alpha,\; z\bigr)\\ g(x,y,z) &= \bigl(x\cos60^\circ - y\sin60^\circ,\; x\sin60^\circ + y\cos60^\circ,\; z+\tfrac{\gamma}{6}\bigr) \end{aligned}\) with \(\alpha,\gamma>0\) & Sixt-turn space \\
\hline
\(N_{13}\) & \(\begin{aligned} t_1(x,y,z) &= (x+2a,\; y,\; z)\\ t_2(x,y,z) &= (x,\; y+2b,\; z)\\ t_3(x,y,z) &= (x,\; y,\; z+2c)\\ g_1(x,y,z) &= (-x,\; y,\; z+c)\\ g_2(x,y,z) &= (x,\; -y,\; z+c) \end{aligned}\) with \(a,b,c>0\) & Hantzsche-Wendt manifold \\
\hline
\(N_{14}\) & Similar to \(N_{13}\) but with one generator orientation-reversing & First non-orientable compact \(\mathbb{Z}_2\times\mathbb{Z}_2\) \\
\hline
\(N_{15}\) & Another distinct family (different holonomy action) & Second non-orientable compact \(\mathbb{Z}_2\times\mathbb{Z}_2\) \\
\hline
\(N_{16}\) & \(\begin{aligned} g_1(x,y,z) &= (-x,\; y,\; z+\delta)\\ g_2(x,y,z) &= (x,\; -y,\; z+\delta) \end{aligned}\) with \(\delta>0\) & Non-compact \(\mathbb{Z}_2\times\mathbb{Z}_2\) (rank-0 translation) \\
\hline
\end{longtable}
\normalsize

\subsection*{Proof of Theorem \ref{T2}} 
First, note that the space of static potentials on a quotient manifold is precisely the subspace of \(span\{1,x,y,z\}\) that is invariant under the action of \(\Gamma\).

From the table, the flat \(3\)-manifolds for which the space of \(\Gamma\)-invariant functions in \(span\{1,x,y,z\}\) has dimension \(2\) are:
\begin{itemize}
    \item \(N_3: \mathbb{R} \times\mathbb T^2\): the group \(\Gamma\) consists of translations in \(y\) and \(z\); the invariant functions are \(c_0 + c_1 x\).
    \item \(N_5: \text{open M\"obius} \times \mathbb{R}\): the group \(\Gamma\) includes a glide reflection \((x+L, -y, z)\); the invariant functions are \(c_0 + c_3 z\).
    \item \(N_6: \mathbb{R} \times \text{Klein bottle}\): the group \(\Gamma\) includes translations in \(y\) and a glide reflection in \(y\) and \(z\); the invariant functions are \(c_0 + c_1 x\).
\end{itemize}
In each case the space of static potentials is generated by \(1\) and \(x\) (for \(N_3\) and \(N_6\)) or by \(1\) and \(z\) (for \(N_5\)). However, in all these cases the zero sets of linearly independent static potentials do not intersect.

In cases \(N_1\) and \(N_2\), \(\mathcal{P}\) is generated by \(\{1,x,y,z\}\) and \(\{1,y,z\}\), respectively. In both cases one can clearly find static potentials whose zero sets intersect (e.g., \(x\) and \(y\) for \(N_1\); \(y\) and \(z\) for \(N_2\)).

For all remaining flat \(3\)-manifolds, \(\mathcal{P}\) is one-dimensional (spanned by the constant function \(1\)), which contradicts the assumption that \(\mathcal{P}\) has dimension at least \(2\).

Therefore, under the stated assumptions, the only admissible dimensions for the space \(\mathcal{P}\) of static potentials are \(3\) and \(4\), corresponding to the manifolds \(N_1\) and \(N_2\).

\section{Appendix B} \label{appB}

Here, we give some details on the technical results from Section~\ref{compquest}. First, we describe the algorithm generating the table at the beginning of that section. Then, we provide the algorithm underlying our list of complete metrics of the form~\eqref{form}.

\subsection{Derivation of the table conditions} \label{table}

We study the function
\[
f(r)=k_1+\frac{k_2}{r}+k_3r^2,\qquad r>0,
\]
with real constants \(k_1,k_2,k_3\).  The positive roots of \(f\) are the positive roots of the cubic obtained by multiplying by \(r\):
\[
P(r)=k_3r^3+k_1r+k_2=0,\qquad r>0.
\]

\subsection*{1. Degenerate cases}
\begin{itemize}
  \item \(k_3=0\): \(P(r)=k_1r+k_2\).  A positive root exists iff \(k_1k_2<0\); then exactly one.
  \item \(k_2=0\): \(P(r)=r(k_3r^2+k_1)\).  For \(r>0\) we need \(k_3r^2+k_1=0\), which gives a positive root iff \(k_3k_1<0\); then exactly one.
  \item \(k_3\neq0,\;k_2\neq0\): proceed to the general analysis below.
\end{itemize}

\subsection*{2. General case \(k_3\neq0,\;k_2\neq0\)}
The cubic is depressed (no \(r^2\) term).  Its discriminant is
\[
\Delta = -4k_1^3k_3 - 27k_2^2k_3^2 = -k_3\,(4k_1^3+27k_3k_2^2).
\]
The number of real roots is determined by \(\Delta\). Namely,
\begin{itemize}
\item $\Delta>0\;\Rightarrow\; \text{three distinct real roots},$
\item $\Delta=0\;\Rightarrow\; \text{a multiple root},$
\item $\Delta<0\;\Rightarrow\; \text{one real root}.$
\end{itemize}

Since the sum of the three roots is \(0\), the signs of the roots are coupled.  By Vieta's theorem, the product of the roots equals \(-k_2/k_3\).  Descartes' rule of signs applied to \(P(r)\) and \(P(-r)\) gives bounds on the numbers of positive and negative roots.

A systematic case distinction based on the signs of \(k_3k_2\) and \(k_3k_1\) and the value of 
\(\Delta\) yields the table in the beginning of Section~\ref{compquest}.  
For instance:
\begin{itemize}
  \item When \(k_3k_2>0\) and \(k_3k_1<0\), the product of the three real roots (if all real) is 
    \(-\dfrac{k_2}{k_3}<0\). The cubic has three real roots iff \(\Delta>0\), i.e.
    \[
    -k_3(4k_1^3+27k_3k_2^2) > 0 \quad\Longleftrightarrow\quad k_3(4k_1^3+27k_3k_2^2) < 0.
    \]
    Under the sign conditions \(k_3k_2>0,\;k_3k_1<0\), this inequality is equivalent to
    \(27|k_3|k_2^2 < 4|k_1|^3\).  
    In this situation the three roots consist of two positive and one negative root, giving 
    \textbf{two distinct positive roots}.  
    If \(27|k_3|k_2^2 = 4|k_1|^3\) (i.e. \(\Delta=0\)), there is a double positive root and a simple negative root, i.e. 
    \textbf{one positive root (double)}.  
    If \(27|k_3|k_2^2 > 4|k_1|^3\) (i.e. \(\Delta<0\)), the cubic has only one real root (which is negative), so 
    \textbf{no positive root}.
  
  \item When \(k_3k_2<0\) and \(k_3k_1<0\), the product of the three real roots (if all real) is 
    \(-\dfrac{k_2}{k_3}>0\). Regardless of whether the cubic has one or three real roots, 
    there is always exactly \textbf{one positive root} (simple except when the discriminant vanishes, 
    in which case it is double).  
    When \(k_3k_2<0\) but \(k_3k_1>0\), one again finds \textbf{one positive root} (the cubic has 
    exactly one real root, which is positive).
  
  \item All remaining sign combinations (including \(k_3=0\) with \(k_1k_2\ge0\), \(k_2=0\) with 
    \(k_1k_3\ge0\), and \(k_3\neq0,k_2\neq0\) with \(k_3k_2>0,\;k_3k_1\ge0\), as well as 
    \(k_3k_2>0,\;k_3k_1<0\) with \(27|k_3|k_2^2 > 4|k_1|^3\)) yield \textbf{no positive root}.
\end{itemize}

The special subcases \(k_2=0\) or \(k_3=0\) are already covered by the degenerate analysis. Collecting all possibilities gives the conditions listed in the table in the beginning of Section~\ref{compquest}.

\subsection{Algorithm for determining completeness of the metric}\label{metriccomp}
Consider the metric
\[
g = f(r)\,dx^2 + \frac{dr^2}{f(r)} + r^2\,d\phi^2,
\qquad
f(r)=k_1+\frac{k_2}{r}+k_3r^2,
\]
with real constants \(k_1,k_2,k_3\).  The following algorithm identifies all parameter regimes for which the metric can be made geodesically complete by choosing appropriate periodic identifications.

\begin{enumerate}
    \item \textbf{Analyse \(f(r)\).} Use the results of the table in the beginning of Section~\ref{compquest}. See the explanation of them in the previous section.
    
    \item \textbf{Select admissible domains.}
    For each maximal interval \((a,b)\) with \(0\leqslant a<b\leqslant\infty\) where \(f>0\), consider the coordinate ranges:
    \begin{itemize}
        \item \(r\in (a,b)\),
        \item \(x\in\mathbb{R}\) or \(x\in \mathbb S^1\) (periodic),
        \item \(\phi\in \mathbb S^1\) (periodic, to be determined).
    \end{itemize}
    The metric is initially defined on the product of this interval with the circles (or line) for \(x\) and \(\phi\).

    \item \textbf{Check regularity at boundaries.}
    \begin{enumerate}
        \item \textbf{At \(r=0\):}  
        For \(r=0\) to be included, we need \(k_2=0\) and \(k_1>0\) so that \(f(0)=k_1>0\).  
        Then near \(r=0\),
        \[
        g \approx k_1 dx^2 + \frac{dr^2}{k_1} + r^2 d\phi^2,
        \]
        which is smooth if \(\phi\) has period \(2\pi/\sqrt{k_1}\).  The point \(r=0\) becomes a regular axis.

        \item \textbf{At a simple root \(r_0>0\) (\(f(r_0)=0,\;f'(r_0)\neq0\)):}  
        The coordinate singularity can be removed by taking \(x\) periodic with period \(4\pi/|f'(r_0)|\).  The surface \(r=r_0\) is then a smooth bolt (fixed point of \(\partial_x\)).  If the domain includes this root as an endpoint, the metric extends smoothly across it.

        \item \textbf{At a double root (\(f(r_0)=f'(r_0)=0\)):}  
        Such a root corresponds to a degenerate horizon and gives a cusp at infinite distance.  The metric extends to a complete manifold if one includes the boundary as a cusp; no period condition on \(x\) is required because the bolt degenerates.  This case is treated separately in the enumeration below.

        \item \textbf{As \(r\to\infty\):}  
        \begin{itemize}
            \item If \(k_3>0\): \(f(r)\sim k_3 r^2\); the radial distance diverges logarithmically, so the manifold is complete in this direction.
            \item If \(k_3=0\) and \(k_1>0\): \(f(r)\to k_1\); radial distance grows linearly, hence complete.
            \item If \(k_3<0\): \(f(r)\) becomes negative for large \(r\); the domain is bounded above by a root.
        \end{itemize}
    \end{enumerate}

    \item \textbf{Determine the periodicity of \(x\).}
    \begin{itemize}
        \item If no bolt is present, \(x\) can be either \(\mathbb{R}\) (non-compact) or \(\mathbb S^1\) with any period; completeness is unaffected.
        \item If a single bolt exists at \(r=r_0\), the period of \(x\) is forced to be \(4\pi/|f'(r_0)|\).
        \item If two bolts exist (e.g., at \(r=r_1\) and \(r=r_2\)), the periods must coincide: \(4\pi/|f'(r_1)| = 4\pi/|f'(r_2)|\).  This imposes an algebraic relation among the parameters, which for the cubic \(f(r)\) becomes \(r_1 r_2 = -k_1/(3k_3)\).
        \item If the only boundary is a double root (cusp), no period condition on \(x\) is required; \(x\) can be non-compact or periodic with any period.
    \end{itemize}

    \item \textbf{Enumerate all parameter regimes that yield a complete manifold.}
    Combine the root classification with the regularity conditions. This produces the families listed in the original classification:
    \begin{enumerate}
        \item \textbf{No roots:} \(k_2=0,\;k_1>0,\;k_3\geqslant 0\).  
        Domain: \(r\in[0,\infty)\).  \(\phi\) period \(2\pi/\sqrt{k_1}\), \(x\) arbitrary.  Geometry: \(\mathbb{E}^3\) (\(k_3=0\)) or \(\mathbb{H}^3\) (\(k_3>0\)).

        \item \textbf{One simple root, outer domain:}  \(f>0\) on \([r_0,\infty)\) with \(f'(r_0)>0\).  
        Occurs for several sign combinations (e.g., \(k_3>0,k_2<0\); \(k_3>0,k_2=0,k_1<0\); \(k_3>0,k_2>0\) with sufficiently negative \(k_1\); \(k_3=0,k_1>0,k_2<0\)).  
        Completeness requires \(x\) period \(4\pi/f'(r_0)\).

        \item \textbf{One simple root, inner domain:}  \(k_2=0,\;k_1>0,\;k_3<0\).  
        Domain: \(r\in[0,r_0]\) with \(r_0=\sqrt{-k_1/k_3}\).  \(\phi\) period \(2\pi/\sqrt{k_1}\), \(x\) period \(2\pi/\sqrt{-k_1k_3}\).  Compact manifold (squashed \(\mathbb S^3\)).

        \item \textbf{Two simple roots, bounded domain:}  \(k_3<0,\;k_1>0,\;k_2<0\) with \(27|k_3|k_2^2<4|k_1|^3\) (two distinct positive roots \(r_1<r_2\)).  
        The two bolts force the \(x\)-period to be \(4\pi/|f'(r_1)|\) and also \(4\pi/|f'(r_2)|\); for a smooth compact manifold these must be equal.  This equality is equivalent to \(r_1 r_2 = -k_1/(3k_3)\), which is a fine-tuning condition on the parameters.  When it holds, the domain \(r\in[r_1,r_2]\) yields a compact 3-manifold with two smooth bolts.

        \item \textbf{Double root (cusp):}  Occurs in two subcases:
        \begin{itemize}
            \item \(k_3>0,\;k_2=0,\;k_1=0\): double root at \(r=0\).  Domain: \(r\in(0,\infty)\) (since \(f(r)=k_3r^2>0\) for \(r>0\)).  The point \(r=0\) is a cusp at infinite distance; no period condition on \(x\) is needed.  \(\phi\) can have any period.
            \item \(k_3>0,\;k_2>0,\;k_1=-3\left(\dfrac{k_2^2k_3}{4}\right)^{1/3}\): double root at \(r_0=\left(\dfrac{k_2}{2k_3}\right)^{1/3}\).  Domain: \(r\in[r_0,\infty)\) with \(f>0\) for \(r>r_0\).  The cusp at \(r=r_0\) lies at infinite distance; again no period condition on \(x\) is required, and \(\phi\) can have any period.
        \end{itemize}
        In both cases the resulting manifold is complete and non-compact, with a cusp at the boundary.
    \end{enumerate}
    All other parameter choices either yield no region with \(f>0\) or lead to incomplete manifolds (e.g., a boundary that cannot be made regular).
\end{enumerate}

\bibliography{mybib}
\bibliographystyle{alpha}

\end{document}